\documentclass{amsart}
\usepackage{tikz}
\usepackage[mathscr]{eucal}
\usepackage{amssymb, mathabx,stmaryrd}
\usepackage{algorithm,algpseudocode}
\usepackage{enumitem}

\usepackage{xcolor}

\newtheorem{theorem}{Theorem}[section]
\newtheorem{lemma}[theorem]{Lemma}
\newtheorem{cor}[theorem]{Corollary}

\newtheorem{defn}[theorem]{Definition}

\newtheorem{example}[theorem]{Example}

\newcommand{\deq}{\mathrel{\mathop:}=}
\newcommand{\alg}[1]{\mathbf{#1}}
\newcommand{\var}[1]{\mathsf{#1}}
\newcommand{\algov}[1]{\overline{\alg{#1}}}
\newcommand{\ov}[1]{\overline{#1}}
\newcommand{\FV}{\alg{F}_\mathcal{V}}
\newcommand{\FW}{\alg{F}_\mathcal{W}}
\newcommand{\FVhat}{\alg{F}_{\widehat{\mathcal{V}}}}
\newcommand{\FVo}{\alg{F}_{\mathcal{V}_0}}
\newcommand{\FVohat}{\alg{F}_{\widehat{\mathcal{V}_0}}}

\newcommand{\Con}[1]{\mathbf{Con}\, #1}
\newcommand{\con}[1]{\mathrm{Con}\, #1}
\newcommand{\Cm}[1]{\mathbf{Cm}\, #1}
\newcommand{\cm}[1]{\mathrm{Cm}\, #1}

\newcommand{\J}[1]{\mathcal{J}(#1)}

\newcommand{\Irr}[1]{\mathbf{I}(#1)}
\newcommand{\irr}[1]{\mathrm{I}(#1)}

\newcommand{\onto}{\rightarrow\mathrel{\mkern-14mu}\rightarrow}

\def\Ra{\Rightarrow}
\def\ra{\rightarrow}

\makeatletter
\newcommand{\doublevee}{\@doubleop{\vee}}
\newcommand{\doublewedge}{\@doubleop{\wedge}}
\newcommand{\@doubleop}[1]{%
  \DOTSB\mathop{\mathpalette\@doubleop@aux{#1}}
}

\newcommand\@doubleop@aux[2]{%
  \sbox\z@{$\m@th#1#2$}%
  \makebox[1.25\wd\z@][s]{$\m@th#1#2\hss#2$}%
}

\newcommand{\bigdoublevee}{\big@doubleop{\bigvee}}
\newcommand{\bigdoublewedge}{\big@doubleop{\bigwedge}}
\newcommand{\big@doubleop}[1]{%
  \DOTSB\mathop{\mathpalette\big@doubleop@aux{#1}}\slimits@
}

\newcommand\big@doubleop@aux[2]{%
  \sbox\z@{$\m@th#1#2$}%
  \makebox[1.35\wd\z@][s]{$\m@th#1#2\hss#2$}%
}
\makeatother

\def\up#1{{#1}\mathord{\uparrow}}
\def\dw#1{{#1}\mathord{\downarrow}}

\def\pw{\mathscr{P}}
\def\glue#1#2#3#4{#1^#2\star{}^#3\kern-0.2ex#4}
\def\HtM{\overline{M}}

\def\comm#1#2{\mathbin{[#1,#2]}}
\def\op{\bigstar}

\tikzstyle{every label}=[label distance=0pt]
\tikzstyle{bdot}[1.5]=[circle,fill,draw,thick,minimum size=#1mm,inner sep=0pt]
\tikzstyle{dot}[1.5]=[circle,draw,thick,minimum size=#1mm,inner sep=0pt]
\tikzstyle{sdot}[1.5]=[rectangle,draw,thick,minimum size=#1mm,inner sep=0pt]
\tikzstyle{rdot}[1.5]=[circle,fill=red,draw=red,thick,minimum size=#1mm,inner sep=0pt]
\tikzstyle{every edge}=[draw=black,thick]

\title[Free Hilbert algebras]{A generic construction of free algebras
in varieties of Hilbert algebras and Brouwerian semilattices} 

\author[Kowalski]{Tomasz Kowalski$^{2,3,4}$}
\author[Słomczyńska]{Katarzyna Słomczyńska$^{1}$}

\address{$^{1}$ Department of Mathematics,
University of the National Education Commission, Kraków}
\email{irena.korwin-slomczynska@uken.krakow.pl} 

\address{$^{2}$ Department of Logic, Jagiellonian University}
\email{tomasz.s.kowalski@uj.edu.pl}

\address{$^{3}$ Department of Physical and Mathematical Sciences, La Trobe
  University} 
\email{t.kowalski@latrobe.edu.au}

\address{$^{4}$ School of Historical and Philosophical Inquiry,
  The University of Queensland} 
\email{t.kowalski@uq.edu.au}

\begin{document}

\maketitle

\begin{abstract}
We give a generic construction of the $n$-generated free Hilbert algebras and
Brouwerian semilattices by a uniform
method applicable to any variety of Hilbert algebras or Brouwerian
semilattices, as long as a manageable description of subdirectly irreducible
algebras in that variety is available.
A slight modification yields analogous constructions for varieties of Hilbert
algebras and Brouwerian semilattices with zero. As examples we construct free
algebras in varieties of bounded height and of bounded width; we also find a
closed formula for the free spectrum of linear Hilbert algebras. Finally, we 
obtain a few results on structural completeness of varieties of Hilbert algebras
with zero, in particular, we give a sufficient condition for a quasi-equation to
be equivalent to an equation.  
\end{abstract}

\section{Introduction}\label{sec:intro}

Brouwerian semilattices ($\mathsf{Br}$) are among the most investigated classes
of \emph{varieties of   logic}. In the seminal article of K\"ohler~\cite{Koh81}
in particular, a construction of the free 
Brouwerian semilattice is presented, based in part on an earlier
work of de Bruijn~\cite{deB75}. It leads to a recursive formula for the number
of meet-irreducible elements, which was used to compute the exact number
of elements of the 3-generated free Brouwerian semilattice\footnote{Independently
  calculated in Krzystek~\cite{Krz77}.}. 
Of the more recent works, Bezhanishvili et al.~\cite{BBCGGJ21}
use a colouring technique to construct free algebras in \emph{nuclear Brouwerian
  semilattices}, which expand Brouwerian semilattices with a strong form of a
nucleus (an S4-like modality).

Hilbert algebras ($\mathsf{Hi}$), implicative
subreducts of Brouwerian semilattices, are less studied. A number of
observations can be found in de Bruijn~\cite{deB75} and
Urquhart~\cite{Urq74}: for free algebras, these amount to noticing that the
free Hilbert algebra generated by $X$ can be obtained from the free Brouwerian
semilattice generated by $X$ by using only the arrow for generating.
It is certainly true, but does not elucidate the structure of free Hilbert
algebras. More recently, finite Hilbert algebras (but
not the free algebra) were 
studied in Celani and Cabrer~\cite{CC05}.
All these works use techniques based either on a syntactic analysis of terms,
or on dualities and Kripke frames.

\subsection{Outline of the article}
We take a different route: a purely algebraic one, motivated by
Słomczyńska~\cite{Slo08}. Beginning from
an $n$-generated free algebra $\FV(n)$---which obviously exists and is
finite\footnote{By local finiteness of Hilbert algebras.}---in some subvariety
$\mathcal{V}$ of $\mathsf{Hi}$ 
(or of its variant $\mathsf{Ho}$ expanded by zero),
we analyse the structure of $\FV(n)$ given by its subdirectly
irreducible factors, that is, its completely meet-irreducible congruences.
The poset $\Cm{\FV(n)}$ of these congruences is isomorphic
to the poset $\Cm{\FVhat(n)}$ of completely meet-irreducible congruences
of the free algebra $\FVhat(n)$, where $\widehat{\mathcal{V}}$ is a natural
counterpart of $\mathcal{V}$ in $\var{Br}$ (or $\var{Bo}$, the zero-expanded variant).

Since the lattice of subvarieties of  $\var{Br}$ embeds in the lattice of
subvarieties of $\var{Hi}$, and the same holds for
$\var{Ho}$ and $\var{Bo}$ (see Theorem~\ref{thm:subvariety-isom}), it gives us
skeletons  (Kripke frames) for free algebras in all these varieties in a uniform way.

The frames have a multilayer structure, corresponding to the ever higher
subdirectly irreducibles (with height measured by chains of meet-irreducible
congruences). The crucial difference between varieties with zero 
and the varieties without zero occurs only at the first layer, higher up the
construction of the frame is the same. 

The next step, where the distinction between Brouwerian semilattices and Hilbert
algebras comes into play, is the choice of some antichains in the frames.
If we take all antichains, we obtain a Brouwerian semilattice, if we omit some,
we lose some meets, so we obtain a \emph{partial} Brouwerian
semilattice. Carefully selecting the right antichains we arrive at our
description of $\FV(n)$. Algorithms~1 and~2 of Section~\ref{sec:append}
show how to build
$\Cm{\FV}(n)\cong\mathbb{P}(n) = \bigcup_{i=1}^kP_i(n)$, generically, for
$\mathcal{V} = \var{Br}, \var{Hi}, \var{Bo}, \var{Ho}$.
Now, to build the free Hilbert algebra (without or with zero)
we need a function selecting the right antichains. For any generator $x$, we put 
$$
S(x) \deq \big\{(L,i)\in\mathbb{P}(n):
x\notin L \text{ and } x\in L'\text{ for all } (L',j)>(L,i)\big\},
$$
with the notation explained in Algorithm~1. Here is
a ``baby version'' of our main results, that is, of
Theorems~\ref{thm:free-Hilbert-alg} and~\ref{thm:free-Hi0-alg}.
\begin{theorem}\label{thm:baby-version}
For any $n > 0$, we have
$$
\alg{F}_{\var{Hi}}(n)\cong \biggl(\bigcup_{i=1}^n\pw(S(x_i));\ra,\emptyset\biggr)
\quad
\alg{F}_{\var{Ho}}(n)\cong
\biggl(\bigcup_{i=1}^{n}\pw(S(x_i))\cup \pw(S(0));\ra,\emptyset,P_1(n)\biggr)
$$
where $\pw$ is the power set operation, $1$, $0$ are interpreted as $\emptyset$,
$P_1(n)$ and $\ra$ is defined suitably.
\end{theorem}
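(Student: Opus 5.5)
Since this is the ``baby version'' of Theorems~\ref{thm:free-Hilbert-alg} and~\ref{thm:free-Hi0-alg}, the natural proof follows the route of the outline. We represent $\FV(n)$, for $\mathcal{V}=\var{Hi}$ or $\var{Ho}$, as an algebra of up-sets of its poset of completely meet-irreducible congruences, and then single out which up-sets are actually realised by elements. The frame is described by Algorithms~1 and~2, so that $\Cm{\FV(n)}\cong\mathbb{P}(n)$.

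The first step is the embedding. Since $\FV(n)$ is finite, it is a subdirect product of its subdirectly irreducible quotients $\FV(n)/\theta$ with $\theta\in\Cm{\FV(n)}$. Each such quotient is a finite s.i.\ Hilbert algebra, so it has a unique coatom, i.e.\ an element just below $1$. To each element $a$ we assign the set $\{\theta : a/\theta\neq 1\}$. Because $a\ra b=1$ iff $a\le b$, this set is a down-set of $\mathbb{P}(n)$ when the poset is ordered by inclusion of congruences. Its complement is an up-set, and we pass to order-reversed conventions so that $1$ corresponds to $\emptyset$.

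Next we check that $\ra$ is preserved. We define $U\ra V$ on these sets as in the Heyting-algebra-of-up-sets construction, taken in the dual form, i.e.\ as the down-closure of $V\setminus$ (stuff above $U$) or its appropriate dual. We then verify that the assignment above turns the algebra operation $\ra$ into this set operation. The verification uses the standard fact that in an s.i.\ Hilbert algebra $a\ra b\neq 1$ exactly when $a=1$ and $b\ne 1$ in the relevant quotient, once we account for the filter structure. Since congruences correspond to implicative filters, this is a filter computation. Injectivity of the assignment is automatic from subdirect representability.

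The key identification is the image of a generator. We show that $x_i$ goes to a set determined by $S(x_i)$: $x_i$ is non-$1$ at the layer $(L,i)$ exactly when $x\notin L$, and the condition that $x\in L'$ for all strictly larger points picks out the maximal such points. Then the subalgebra generated by $x_1,\dots,x_n$ must be identified with $\bigcup_i\pw(S(x_i))$. Here we use that in Hilbert algebras every element equals $t\ra x_i$ for some term $t$ and some generator $x_i$, or equals $1$; this is a standard normal form, with the last variable of a term. So each element lies below some $x_i$, and hence its associated set is a subset of $S(x_i)$. Conversely, every subset of $S(x_i)$ is realised by a suitable implication $t\ra x_i$, using points of the frame as ``characteristic'' terms. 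For $\var{Ho}$ the extra constant $0$ is mapped to $P_1(n)$, and the same argument adds $\pw(S(0))$.

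The main obstacle is this last surjectivity claim: that every subset of $S(x_i)$ occurs. We would try induction on the layers $P_1(n),\dots,P_k(n)$, constructing for each point $(L,j)$ a term whose set is exactly that point. The recursion in Algorithm~1, by which points of higher layers are built from antichains of lower ones, should mirror a term construction $(\bigwedge\text{-free})$ $t_1\ra(t_2\ra\cdots\ra x)$. Finally, we use closure of the family $\bigcup_i\pw(S(x_i))$ under $\ra$ to show that the image is exactly this set, not something larger.
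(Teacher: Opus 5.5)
\textbf{There is a genuine gap: the surjectivity step is left open.} Your architecture matches the paper's. You represent $\FV(n)$ by the antichains $S(a)$ in $\Cm{\FV(n)}$, place the image inside $\bigcup_i\pw(S(x_i))$, and then realise every subset of each $S(x_i)$. But the step you call the main obstacle is the heart of the proof, and you give no mechanism for it.

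What is needed is this: for every $\mu\in\cm{\FV(n)}$ there is an element $a_\mu$ with $S(a_\mu)=\{\mu\}$. Given that, $\{\mu\}\ra B=B\smallsetminus\{\mu\}$ for any antichain $B\ni\mu$. So $a_{\mu_1}\ra(a_{\mu_2}\ra\cdots\ra(a_{\mu_m}\ra x_i)\cdots)$ realises $S(x_i)\smallsetminus\{\mu_1,\dots,\mu_m\}$. Your proposed induction on layers building ``characteristic'' terms is only a hope.

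The paper avoids explicit terms altogether. For any finite Hilbert algebra $\alg{A}$ it shows $S(\Irr{\alg{A}})=\{\{\mu\}:\mu\in\cm{\alg{A}}\}$, in two steps:
\begin{enumerate}
\item \emph{$S(a)$ is a singleton when $\theta(1,a)$ is join-irreducible.} Two distinct (hence incomparable) $\mu,\nu\in S(a)$ give $\theta(1,a)\subseteq\nu^+\subseteq\mu\vee\nu$. Distributivity then yields $\theta(1,a)=(\theta(1,a)\cap\mu)\vee(\theta(1,a)\cap\nu)\subseteq\theta(1,a)_-$, which is absurd.
\item \emph{Every $\mu$ is reached.} The finite lattice $\Con{\alg{A}}$ contains a join-irreducible $\alpha=\theta(1,a)\subseteq\mu^+$ with $\alpha\not\subseteq\mu$, whence $\mu\vee\theta(1,a)=\mu^+$, i.e.\ $\mu\in S(a)$.
\end{enumerate}
Since $\FV(n)$ is generated by $x_1,\dots,x_n$, such elements $a_\mu$ are automatically terms, so nothing has to be constructed.

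\textbf{Your argument for the other inclusion is wrong as written.} The element $t\ra x_i$ lies above $x_i$, not below. Moreover, comparability with $x_i$ does not put $S(t\ra x_i)$ inside $S(x_i)$. In $\alg{F}_{\var{Hi}}(2)$ we have $x\leq(x\ra y)\ra y$, yet $S((x\ra y)\ra y)=\{\mu_\emptyset,\nu_y\}$. Here $\mu_\emptyset\in P_1(2)$ sends both generators to $0$, and $\nu_y$ is the kernel of the map onto the three-element chain with $y\mapsto\op$, $x\mapsto 0$. Clearly $\nu_y\notin S(x)$.

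What works is the shape of the antichain implication: $C\ra B\subseteq B$, hence $S(t_1\ra t_2)\subseteq S(t_2)$, and one concludes by induction on terms. Your closing remark about closure under $\ra$ should be made precise in exactly this way.

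Two smaller errors:
\begin{itemize}
\item The ``standard fact'' you invoke for preservation of $\ra$ is false, since $a\ra b\neq 1$ merely means $a\not\leq b$. The correct fact is that in an s.i.\ quotient $a\ra b=\op$ if and only if $a=1$ and $b=\op$.
\item Injectivity of your assignment is not automatic from subdirect representability, because your sets only record whether $a/\theta=1$. It uses orderability: choose $\mu$ with $(a,b)\in\mu^+\smallsetminus\mu$, so that $\{a/\mu,b/\mu\}=\{1,\op\}$.
\end{itemize}
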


To deal with subvarieties generically, we introduce appropriate restrictions:
technically they depend on which members of a variety $\mathcal{V}$ can be
``masted'' in $\mathcal{V}$ (see the remarks immediately preceding
Theorem~\ref{thm:Hi-si}). 
Having built $\FV(n)$ for some variety $\mathcal{V}$ of Hilbert algebras,
we obtain $\FVhat(n)$ for the corresponding variety
$\widehat{\mathcal{V}}$ of Brouwerian semilattices
(see Theorems~\ref{thm:free-Br-semi} and~\ref{thm:free-Br-semi-0})
by closing $\FV(n)$ under meets. So, we build free Brouwerian semilattices
by expanding free Hilbert algebras, 
rather than building free Hilbert algebras by subreducing free Brouwerian
semilattices.

\section{Preliminaries}
Our general algebraic notation and terminology is standard, based largely on
\cite{ALVI}, and \cite{Ber11}.
Below we recall notation we will very frequently use
in the article, as well as a few lesser known universal algebra notions
that will be important for us.

For an algebra $\alg{A}$ we write
$\con{\alg{A}}$ for the set of congruences on $\alg{A}$, and
$\Con{\alg{A}}$ for the lattice of congruences on $\alg{A}$.
Further, we write $\cm{\alg{A}}$ for the
set of completely meet-irreducible congruences of $\alg{A}$, and
$\Cm{\alg{A}}$ for the poset $(\cm{\alg{A}};\subseteq)$ of completely
meet-irreducible congruences ordered by inclusion. 
We write $\subsetneq$ for proper inclusion and $\subseteq$ for inclusion; the
distinction will be crucial at several places.
Each congruence $\mu\in\cm{\alg{A}}$ has a unique cover, which we will denote $\mu^+$.
Dually, each completely join-irreducible congruence $\alpha$ has a unique
subcover, which we will denote $\alpha_-$. Intervals in congruence lattices will
be denoted by $\mathrm{I}[\alpha,\beta]$ to distinguish them notationally from
the commutator $[\alpha,\beta]$ that we will occasionally use.

For a poset $(P;\leq)$, we write $\mathrm{Up}(P;\leq)$ for the set of
upsets of $(P;\leq)$. Very often the order relation will be natural and clear
from context, in such cases we will write $P$ and $\mathrm{Up}(P)$.
We write $\up{X}$, $\dw{X}$ for respectively the upset and
the downset generated by $X\subseteq P$, and $X^\complement$ for the set complement
of $X$. 
 
An algebra $\alg{A}$ is called \emph{pointed} if $\alg{A}$ has a term-definable
constant. In this article we will denote it uniformly by $1$.
Typically, but not necessarily,
$1$ is a constant in the signature. A class of algebras
$\mathcal{K}$ is pointed if every $\alg{A}\in\mathcal{K}$ is pointed
with $1$ defined by the same term. The set of all terms in
$n$ variables over a signature $\Sigma$, will be denoted by
$\mathrm{Term}_\Sigma(n)$, with $\Sigma$ 
written in some convenient form clear from context, for example,
$\mathrm{Term}_{\{\ra,1\}}(n)$, $\mathrm{Term}_{\{\ra,\wedge,1\}}(n)$,
$\mathrm{Term}_{\{\ra,0,1\}}(n)$, etc.

A pointed algebra $\alg{A}$ is
\emph{congruence orderable with respect to $1$} (or simply
\emph{orderable}) if the preordering relation $\leq$, defined
by $x\leq y$ if{f} $\theta(y,1)\subseteq\theta(x,1)$, is an ordering.
In any orderable algebra, $1$ is the top element with respect to the congruence
ordering. A class 
$\mathcal{K}$ is orderable if every $\alg{A}\in\mathcal{K}$ is.
The next result (cf. Lemma~2.1  of~\cite{ISW09}) characterises
subdirectly irreducible algebras in orderable varieties.

\begin{lemma}\label{lem:1-orderable-si}
A variety $\mathcal{V}$ is orderable if and only if for any subdirectly
irreducible $\alg{A}\in\mathcal{V}$ with monolith $\mu$ we have
$|1/\mu| = 2$ and $|a/\mu| = 1$ for any $a\notin 1/\mu$.
\end{lemma}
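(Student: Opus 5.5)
We prove both directions by looking at the monolith of a subdirectly irreducible algebra and the congruences generated by pairs $(a,1)$. Throughout, for a pointed algebra $\alg{A}$ we write $\theta(a,b)$ for the principal congruence generated by $(a,b)$.

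\emph{($\Rightarrow$)} Let $\mathcal{V}$ be orderable and $\alg{A}\in\mathcal{V}$ subdirectly irreducible with monolith $\mu$.

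First, $\mu\neq 0$, so there are $a\neq b$ with $a\,\mu\, b$.

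Next we show that some element other than $1$ lies in $1/\mu$. For any $c\neq 1$, antisymmetry of the congruence ordering forces $\theta(c,1)\neq 0$; otherwise $\theta(1,1)=\theta(c,1)$ would give $c\leq 1$ and $1\leq c$, hence $c=1$. So $\theta(c,1)$ is a nonzero congruence, and therefore $\mu\subseteq\theta(c,1)$.

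To place an element of $1/\mu$ below $1$, note that $\mu$ is principal, say $\mu=\theta(a,b)$ with $a\neq b$. We cannot yet assume $1/\mu$ is nontrivial, so we argue by contradiction. Suppose $1/\mu=\{1\}$. The quotient $\alg{A}/\mu$ lies in $\mathcal{V}$ and is orderable. Consider the pair $a,b$: the classes $a/\mu$ and $b/\mu$ coincide, so the congruences $\theta(a/\mu,1)$ and $\theta(b/\mu,1)$ in $\alg{A}/\mu$ are equal. By the correspondence theorem these are $\theta(a,1)\vee\mu$ and $\theta(b,1)\vee\mu$. Since $\mu\subseteq\theta(a,1)$ whenever $a\neq 1$, these joins reduce to $\theta(a,1)$ and $\theta(b,1)$, provided $a,b\neq 1$. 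If one of $a,b$ equals $1$, then $1/\mu$ is already nontrivial and we are done. Otherwise $\theta(a,1)=\theta(b,1)$, and orderability of $\alg{A}$ gives $a=b$, a contradiction. Hence there is $c\neq 1$ with $c\in 1/\mu$, and then $\mu=\theta(c,1)$ by minimality of $\mu$.

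Now take any $d\in 1/\mu$ with $d\neq 1$. Then $\theta(d,1)\subseteq\mu$, and since $\theta(d,1)\neq 0$ we also have $\mu\subseteq\theta(d,1)$. Thus $\theta(d,1)=\theta(c,1)$, so $d=c$ by antisymmetry. This gives $|1/\mu|=2$.

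For $a\notin 1/\mu$, suppose $a\,\mu\, b$ with $a\neq b$. The argument above applies verbatim to the pair $a,b$: since $a\neq 1$, we get $\theta(a,1)=\theta(b,1)$ (or $b=1$, which would put $a\in 1/\mu$). Either way we get a contradiction, so $|a/\mu|=1$.

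\emph{($\Leftarrow$)} Every algebra in $\mathcal{V}$ is a subdirect product of subdirectly irreducibles, and principal congruences are compatible with homomorphisms. Suppose $\theta(x,1)=\theta(y,1)$ in $\alg{A}$ but $x\neq y$. Choose a completely meet-irreducible $\psi$ separating $x$ from $y$, so that $\alg{B}=\alg{A}/\psi$ is subdirectly irreducible with monolith $\mu$, and $\theta^{\alg{B}}(\bar x,1)=\theta^{\alg{B}}(\bar y,1)$ with $\bar x\neq\bar y$.

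The hypothesis on $\mu$ implies that every nonzero congruence of $\alg{B}$ contains $\mu$, and that the classes of $\mu$ are trivial except $1/\mu=\{1,e\}$. We then pass to $\alg{B}/\mu$ and induct on the pair. The main obstacle is that subdirectly irreducibles may be infinite, so a naive induction on size fails. The cleaner route, following the cited Lemma~2.1 of~\cite{ISW09}, is to show directly that if $\bar x\neq\bar y$, then $\theta(\bar x,1)$ and $\theta(\bar y,1)$ are distinguished by the element $e$ or by a suitable completely meet-irreducible congruence. I expect this converse step to be the delicate part of the proof.
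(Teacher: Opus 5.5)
Your forward direction is correct, though more roundabout than necessary. The supposition $1/\mu=\{1\}$ is never actually used, and neither is the passage to $\alg{A}/\mu$. The core observation is this: if $(a,b)\in\mu$ with $a,b\neq 1$, then $\mu\subseteq\theta(a,1)$ gives $(b,1)\in\theta(a,1)$, and symmetrically $(a,1)\in\theta(b,1)$. So $\theta(a,1)=\theta(b,1)$, and $a=b$ by orderability. Hence every nontrivial pair in $\mu$ involves $1$. This gives all three conclusions at once:
\begin{itemize}
\item every class other than $1/\mu$ is a singleton;
\item $1/\mu\neq\{1\}$, because $\mu$ is nontrivial;
\item $|1/\mu|=2$, by applying the observation to two elements of $1/\mu\smallsetminus\{1\}$.
\end{itemize}

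The converse, however, is not proved. You set it up, then defer to an unspecified ``induction'' or to the citation, and call it delicate. It is not delicate; the gap lies in how you choose $\psi$. An arbitrary completely meet-irreducible $\psi$ separating $x$ from $y$ gives no leverage, because $(\bar x,\bar y)$ need not lie in the monolith of $\alg{A}/\psi$, so the hypothesis says nothing about that pair. That is what pushed you toward an induction with no well-founded measure.

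Instead, by Zorn's lemma take $\psi$ maximal among congruences not containing $(x,y)$. Then $\psi$ is completely meet-irreducible and $(x,y)\in\psi^+\smallsetminus\psi$; this is exactly the device the paper uses in the proof of Theorem~\ref{thm:mono}. In $\alg{B}=\alg{A}/\psi$, the pair $(\bar x,\bar y)$ is a nontrivial pair in the monolith $\mu=\psi^+/\psi$. By hypothesis both elements lie in $1/\mu=\{1,e\}$ with $e\neq 1$, so $\{\bar x,\bar y\}=\{1,e\}$. Now $\theta(x,1)=\theta(y,1)$ in $\alg{A}$ implies $\theta^{\alg{B}}(\bar x,1)=\theta^{\alg{B}}(\bar y,1)$. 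But one side is $\theta^{\alg{B}}(1,1)$, the identity congruence, while the other is $\theta^{\alg{B}}(e,1)$, which is nontrivial. This contradiction finishes the proof, with no finiteness, induction, or passage to $\alg{B}/\mu$ needed.

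Note that $|1/\mu|=2$ is used essentially here. If $1/\mu$ contained two elements $e\neq e'$ besides $1$, both could generate the same principal congruence $\mu$, and no contradiction would arise.
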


Thus, in orderable varieties
subdirectly irreducible algebras all have a unique
subcover of $1$, that is, a unique coatom.
Having a unique does not 
force subdirect irreducibility, in orderable varieties (pointed
distributive lattices furnish a counterexample), unless we add one more
condition: congruence 1-regularity.

A pointed algebra $\alg{A}$ is  
\emph{congruence 1-regular} (or simply 
\emph{1-regular}) if $1/\alpha = 1/\beta$ implies $\alpha = \beta$,
for any $\alpha,\beta\in\Con{\alg{A}}$. A class $\mathcal{K}$ is 1-regular
if every $\alg{A}\in\mathcal{K}$ is. In 1-regular varieties, congruence
blocks of $1$ play an important role, 
and they are often called \emph{filters} (or in a dual version, \emph{ideals};
see~\cite{GU84} for the beginnings of the theory of ideals in universal algebra,
later developed in~\cite{Urs94, AU96, AU97, AU97a, Urs00}).
We will return to filters in the
specific context of Hilbert algebras and Brouwerian semilattices later.

If $\alg{A}$ is both orderable and 1-regular,
then $\alg{A}$ is called \emph{Fregean} and the same applies to classes of
algebras. The theory of Fregean algebras began with~\cite{BKP84}, and was
developed in~\cite{ISW09} and~\cite{Slo12, Slo22}. The following
characterisation of subdirectly irreducibles in Fregean varieties
is Proposition~3.1 of~\cite{Slo12}. 

\begin{lemma}\label{lem:1-fregean-si}
Let $\mathcal{V}$ be a Fregean variety, and let $\alg{A}\in\mathcal{V}$. Then
$\alg{A}$ is subdirectly irreducible if and only if $\alg{A}$ has a unique coatom. 
\end{lemma}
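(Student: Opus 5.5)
Both directions rest on Lemma~\ref{lem:1-orderable-si} for one direction and on 1-regularity for the other. Here ``coatom'' means a coatom of the congruence ordering $\leq$; orderability makes $\leq$ a partial order with top $1$.

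\emph{Subdirectly irreducible implies a unique coatom.} Suppose $\alg{A}$ is subdirectly irreducible with monolith $\mu$. Lemma~\ref{lem:1-orderable-si} gives $1/\mu=\{1,c\}$ with $c\neq 1$, and every other $\mu$-block is a singleton.
\begin{itemize}
\item $c$ is a coatom. If $c\leq d<1$, then $\theta(d,1)$ is a nonzero congruence, so it contains $\mu=\theta(c,1)$. This gives $d\leq c$, hence $d=c$.
\item $c$ is the only coatom. If $d\neq 1$, then $\theta(d,1)\supseteq\mu=\theta(c,1)$, so $d\leq c$.
\end{itemize}
So $c$ is in fact the largest element below $1$, and in particular the unique coatom.

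\emph{A unique coatom implies subdirectly irreducible.} Suppose $c$ is the unique coatom. I want to show that $\theta(c,1)$ is the monolith, that is, it lies below every nonzero congruence $\alpha$.

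The first step uses 1-regularity. If $1/\alpha=\{1\}$, then $1/\alpha=1/\Delta$, so $\alpha=\Delta$. Hence a nonzero $\alpha$ contains some pair $(a,1)$ with $a\neq 1$.

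The second step is to conclude $a\leq c$, which gives $\theta(c,1)\subseteq\theta(a,1)\subseteq\alpha$. This is the main obstacle. A unique coatom does not by itself imply that every $a\neq1$ lies below it: in infinite algebras there could be elements with no coatom above them.

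My first approach is to show that in a Fregean variety $\alg{A}$ has a top element among the elements $\neq 1$, provided it has a unique coatom. I would argue via the subdirect representation.
\begin{itemize}
\item Fix $a\neq 1$. Choose a congruence $\psi$ maximal with respect to $(a,1)\notin\psi$; this uses Zorn's lemma together with the finitary nature of principal congruences.
\item Then $\psi$ is completely meet-irreducible and $\alg{A}/\psi$ is subdirectly irreducible. By the first direction, $a/\psi$ lies below the unique coatom $e/\psi$ of $\alg{A}/\psi$.
\item Since $\theta(e,1)\vee\psi\supsetneq\psi$, maximality of $\psi$ gives $(a,1)\in\theta(e,1)\vee\psi$.
\end{itemize}
The remaining task is to relate $e$ to $c$. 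The natural choice is to show that the 1-block of $\psi^+$ is exactly $\{x: x\leq c\}$, using that $1/\psi^{+}$ is a filter (an upset in $\leq$) and that $c$ is the unique coatom.

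If this detour fails, I will fall back on citing the structure of Fregean varieties from~\cite{Slo12}. The point to use is that the congruence order is well-behaved on principal 1-filters, so any $a\neq 1$ is dominated by a coatom. Concretely, this would show that $\{x: x\neq 1\}$ is a proper filter complement whose maximal extension corresponds to $c$, and uniqueness of the coatom then finishes the argument.
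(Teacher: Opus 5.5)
\textbf{There is a genuine gap in the converse.} Your forward direction is correct, and it proves more than a unique coatom: it shows that $c$ is the greatest element of $A\smallsetminus\{1\}$.

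The converse cannot be completed the way you propose. The intermediate claim you are trying to establish is false for infinite algebras. That claim is that, in a Fregean variety, an algebra with exactly one element covered by $1$ has every $a\neq 1$ below that element, or at least below some coatom.

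Here is a counterexample. Take the pure Hilbert algebra $\algov{P}$ with $P=\{c\}\uplus\omega$, where $c$ is incomparable to every natural number.
\begin{itemize}
\item Since $\omega$ has no maximum, $c$ is the only element covered by $1$.
\item In a pure Hilbert algebra every upset containing $1$ is a filter: if $a\not\leq b$ then $a\ra b=b$. So $\{c,1\}$ and $\omega\cup\{1\}$ are filters.
\item These two filters are nontrivial and intersect in $\{1\}$. By 1-regularity the corresponding congruences are nonzero with meet the identity, so $\algov{P}$ is not subdirectly irreducible.
\end{itemize}

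Concretely, your Zorn detour breaks at ``relate $e$ to $c$''. For $a=0\in\omega$, the filter $\ov{P}\smallsetminus\{0\}$ is maximal avoiding $0$. The quotient is $\alg{2}$, and necessarily $e=0$, which has nothing to do with $c$. Your fallback (``any $a\neq 1$ is dominated by a coatom'') fails in the same example. It also fails in $\algov{P}$ with $P=[0,1)$, which has no coatom at all. Separately, a $1$-block is an upset containing $1$, so it can never equal the downset $\{x: x\leq c\}$.

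So the lemma read literally, with ``coatom'' meaning ``element covered by $1$'', is false for infinite $\alg{A}$. It is correct when ``unique coatom'' means that $A\smallsetminus\{1\}$ has a greatest element $\op$, i.e.\ the traditional ``second greatest element''. This is also how $\op$ is used later, e.g.\ in Theorem~\ref{thm:oprema}(2). The paper gives no proof here; it only cites Proposition~3.1 of \cite{Slo12}.

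Under that reading, the converse is exactly your first two steps and nothing more:
\begin{itemize}
\item By 1-regularity, a nonzero $\alpha$ contains some $(a,1)$ with $a\neq 1$.
\item Then $a\leq\op$ gives $\theta(\op,1)\subseteq\theta(a,1)\subseteq\alpha$.
\item $\theta(\op,1)$ is nonzero because $\op\neq 1$, so it is the monolith.
\end{itemize}
For finite $\alg{A}$ the two readings coincide, since every $a\neq 1$ lies below some element covered by $1$. So you should either adopt the stronger hypothesis or restrict to finite algebras, rather than try to derive one from the other.
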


The recognition of the importance of the unique-coatom property predates
the notions of orderability and Fregeanity by decades: it
was first identified for Heyting algebras. Following the
tradition, we will denote the unique coatom by $\op$.

Let $\alg{A}$ be an arbitrary algebra. The map
$\HtM\colon \Con\,\alg{A} \to \mathrm{Up}(\Cm{\alg{A}})$ 
given for any $\alpha\in\Con\,\alg{A}$ by
$\HtM(\alpha) \deq \{\mu\in\cm{\alg{A}}: \alpha\subseteq\mu\}$ 
is injective, by Birkhoff Theorem. For orderable algebras there is a better
result.

\begin{theorem}\label{thm:Heyting-from-cm}
Let $\alg{A}$ be an orderable algebra.
The map $M\colon \alg{A}\to \mathrm{Up}(\Cm{\alg{A}})$,
given by
$M(a)\deq \HtM(\theta(1,a)) = \{\mu\in\Cm{\alg{A}}: (1,a)\in\mu\}$
is injective, and
$$
(\mathrm{Up}(\Cm{\alg{A}}); \cup,\cap, \ra,\emptyset,\cm{\alg{A}})
$$
is a Heyting algebra, with $U\ra W$ defined as
$\dw{(U\smallsetminus W)}^\complement$
where ${}^\complement$ stands for the set complement.
\end{theorem}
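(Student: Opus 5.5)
Two claims need proof: the map $M$ is injective, and the upsets of $\Cm{\alg{A}}$ form a Heyting algebra with $\ra$ given by the stated formula. For injectivity I will combine orderability with the injectivity of $\HtM$ noted just before the statement.

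Suppose $M(a)=M(b)$. By definition $M(a)=\HtM(\theta(1,a))$, and $\HtM$ is injective because every congruence is the intersection of the completely meet-irreducible congruences containing it (Birkhoff). Hence $\theta(1,a)=\theta(1,b)$. By the definition of the preorder, this gives $a\leq b$ and $b\leq a$. Orderability says the preorder is antisymmetric, so $a=b$.

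We also need $M(a)$ to be an upset. If $(1,a)\in\mu$ and $\mu\subseteq\nu$ with $\nu\in\cm{\alg{A}}$, then $(1,a)\in\nu$. Note that orderability is used only for antisymmetry.

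For the Heyting algebra part, I will invoke the general fact that for any poset $P$, the set $\mathrm{Up}(P)$ is closed under arbitrary unions and intersections. So it is a complete distributive lattice of sets, with bottom $\emptyset$ and top $P=\cm{\alg{A}}$. Arbitrary intersections distribute over finite unions, so it is a Heyting algebra, and the relative pseudocomplement $U\ra W$ is the largest upset $V$ with $V\cap U\subseteq W$.

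It remains to identify this largest upset with $\dw{(U\smallsetminus W)}^\complement$.

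First, this set is an upset, being the complement of a downset. Call it $V_0$.

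Next, $V_0\cap U\subseteq W$. Take $x\in U$ with $x\notin W$. Then $x\in U\smallsetminus W\subseteq\dw{(U\smallsetminus W)}$, so $x\notin V_0$.

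Finally, $V_0$ is the largest such upset. Let $V$ be an upset with $V\cap U\subseteq W$, and suppose $x\in V$ but $x\notin V_0$. Then $x\leq y$ for some $y\in U\smallsetminus W$. Since $V$ is an upset, $y\in V\cap U\subseteq W$, which contradicts $y\notin W$. Hence $V\subseteq V_0$.

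The main subtlety is the injectivity step: it relies on the Birkhoff representation $\alpha=\bigcap\HtM(\alpha)$ to pass from equal sets of meet-irreducibles to equal principal congruences. The rest is routine.
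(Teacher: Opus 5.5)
Your proof is correct and is essentially the argument the paper has in mind. The paper gives no separate proof, only the preceding remark that $\HtM$ is injective by Birkhoff's theorem; orderability then makes $a\mapsto\theta(1,a)$ injective, and the Heyting structure on upsets with $U\ra W=\dw{(U\smallsetminus W)}^\complement$ is the standard one you verify. One harmless slip: the distributive law relevant to Heyting algebras is that finite intersections distribute over arbitrary unions, not that arbitrary intersections distribute over finite unions. This does not matter, because your direct check that $\dw{(U\smallsetminus W)}^\complement$ is the largest upset $V$ with $V\cap U\subseteq W$ already establishes relative pseudocomplements by itself.
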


A pointed algebra $\alg{A}$ is \emph{subtractive} if 
there is a binary term $s$ in the language of $\alg{A}$ such that
$\alg{A}\models s(x,x) = 1$ and $\alg{A}\models s(1,x) = x$.
A class $\mathcal{K}$ is subtractive if every $\alg{A}\in\mathcal{K}$ is
subtractive with the same term $s$.
The theory of subtractive varieties, due overwhelmingly to Aglian\`o and Ursini,
cf.~\cite{Urs94, AU96, AU97, AU97a, Urs00}, is built upon the observation that
many important properties of classical 
algebras such as groups or rings depend only on the existence of a subtractive
term. Although we do not assume
proficiency in this theory, some familiarity with it will help the reader
greatly.

There are three paradigmatic Fregean varieties:
\emph{equivalential algebras} ($\var{Eq}$), \emph{Brouwerian semilattices} 
($\var{Br}$) and \emph{Hilbert algebras} ($\var{Hi}$). Indeed, by Theorem~3.8
in~\cite{ISW09},  
every CP Fregean variety is term-equivalent to a variety of equivalential
algebras with additional operations. By Corollary~4.1 in~\cite{ISW09}, every
arithmetical Fregean variety is term-equivalent to a variety of Brouwerian
semilattices with \emph{compatible} operations; by Theorem~3.4 of~\cite{Agl01}
every Fregean variety with equationally definable principal congruences (EDPC)
is term-equivalent to a variety of Hilbert algebras with compatible operations.
Since $\var{Eq}$, $\var{Br}$, and $\var{Hi}$ play such prominent roles among
Fregean varieties it is natural to investigate free algebras in these varieties.
Free equivalential algebras were studied in~\cite{Slo08}, but we will leave
these aside, and focus on Hilbert algebras and Brouwerian semilattices.
Free Brouwerian semilattices were studied in~\cite{Koh81}, and 
certain constructions leading towards free Hilbert algebras were given
in~\cite{Die66} and in~\cite{Urq74}.

\section{Hilbert algebras and Brouwerian semilattices}\label{sec:Hi}

Hilbert algebras are implication subreducts of Heyting algebras. More
formally we can define an algebra $\alg{A} = (A; \ra,1)$ to be a Hilbert
algebra if it satisfies the following identities:
\begin{enumerate}[label={(H\arabic*)}]
\item $x\ra x = 1$,
\item $1\ra x = x$,
\item $x\ra (y\ra z) = (x\ra y)\ra(x\ra z)$,    
\item $(x\ra y)\ra ((y\ra x)\ra x) = (y\ra x)\ra ((x\ra y)\ra y)$.
\end{enumerate}
Here are a few useful consequences of the defining identities:
\begin{enumerate}[label={(H\arabic*)}]
\setcounter{enumi}{4}  
\item $x\ra (y\ra x) = 1$,  
\item $x\ra (y\ra z) = y\ra (x\ra z)$,
\item $x\ra y = 1 \ \mathbin{\&}\  y\ra x = 1 \ \Ra\  x = y$.
\item $x\ra y = 1 \implies x\ra(y\ra z) = x\ra z$.
\end{enumerate}
The identities (1) and (2) show that $x\ra y$ is a subtractive term.
By (3), for any $a\in A$, the unary
operation $h_a(x) = a\ra x$ is a homomorphism. 
It is immediate that the relation $\leq$ defined by
$x\leq y \iff x\ra y = 1$ is an order on any Hilbert algebra; it is easy to
show that Hilbert algebras are congruence 1-regular and orderable
(with the congruence ordering coinciding with $\leq$), hence Fregean.
Moreover $\var{Hi}$ has equationally definable principal congruences
(EDPC), which implies congruence-distributivity (CD), and is locally finite
by a well-known result of Diego~\cite{Die66}. 
Furthermore, $\var{Hi}$ is precisely the class of subreducts
of \emph{Brouwerian semilattices}, that is, algebras
$\alg{B} = (B;\wedge,\ra,1)$ such that $(B;\wedge,1)$ is a semilattice with
the largest element $1$, and $\ra$ is the residual of $\wedge$, that is,
$$
x\wedge y\leq z \iff y\leq x\ra z
$$
holds for any $x,y,z\in B$. As residuation equivalences are themselves equivalent
to (some) identities, Brouwerian semilattices are a variety, which we will
denote by $\var{Br}$. By an argument paralleling~\cite{Die66}, to be found e.g.,
in~\cite{Koh81}, $\var{Br}$ is locally finite. An equational base for $\var{Br}$
is given by   
\begin{enumerate}[label={(B\arabic*)}]
\item identities defining a meet semilattice with top element $1$,  
\item $x\ra x = 1$, 
\item $x\wedge(x\ra y) = x\wedge y$,
\item $(x\wedge y)\ra z = x\ra(y\ra z)$.
\end{enumerate}
A useful consequence of these is:
\begin{enumerate}[label={(B\arabic*)}]
\setcounter{enumi}{4}
\item $x\ra (y\wedge z) = (x\ra y)\wedge(x\ra z)$.
\end{enumerate}  
Analogously, $\var{Hi}$ is precisely the class of $\{\ra,1\}$-subreducts of Heyting
algebras, and an important subvariety of Hilbert algebras, the variety of
\emph{Tarski algebras}, defined by the identity
$(x\ra y)\ra x = x$, is the class of 
$\{\ra,1\}$-subreducts of Boolean algebras. Since reducts
of Brouwerian semilattices will appear quite often in the article,
we introduce a notation: for a Brouwerian semilattice $\alg{B}$ we write
$\alg{B}^\ra$ for the $\{\ra,1\}$-reduct of $\alg{B}$ and extend this notation
to classes. 

For use in Section~\ref{sec:free-Hi}, we recall here a
characterisation of subdirectly irreducible Hilbert algebras.
For any $\alg{A}\in\var{Hi}$ we write 
$\alg{A}^\oplus$ for the algebra constructed as follows.
Rename the top element of $A$ to $\op$, define
$A^\oplus = A\uplus \{1\}$, extend the ordering on $A$ to $A^\oplus$
putting $x \leq^{\alg{A}^\oplus} 1$ for every $x\in A^\oplus$; then put
$$
x \ra^{\alg{A}^\oplus} y =\begin{cases}
  1 & \text{ if } x\leq^{\alg{A}^\oplus} y,\\
  y & \text{ if } x = 1,\\
  x\ra^{\alg{A}} y & \text{ otherwise.}
\end{cases}
$$
The construction of $\alg{A}^\oplus$, often referred to as the ``masting'' of
$\alg{A}$, can be alternatively described as the \emph{ordinal
  sum} (see e.g.,~\cite{BF00})
$\alg{A}\oplus\alg{2}$, where 
$\alg{2} = (\{0,1\}; \ra, 1)$ with $x\ra y = 0 \iff x = 1 \mathbin{\&} y = 0$.

\begin{theorem}\label{thm:Hi-si}
Let $\mathcal{V}$ be a variety of Hilbert algebras or of Brouwerian semilattices,
and let $\alg{A}\in\mathcal{V}$.  
Then, $\alg{A}^\oplus$ is subdirectly
irreducible (possibly not in $\mathcal{V}$),
and every subdirectly irreducible $\mathcal{V}$-algebra is of that form
for some $\alg{A}\in\mathcal{V}$.
Moreover, $\alg{A}\cong\alg{A}^\oplus\smallsetminus\{\op\}\cong 
\alg{A}^\oplus/\mu$, where $\mu$ is the monolith;
in particular, $1/\mu = \{\op, 1\}$.
Furthermore $A^\oplus\smallsetminus\{\op\}$ is a subuniverse of
$\alg{A}^\oplus$. 
\end{theorem}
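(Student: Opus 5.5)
The plan is to derive everything from the Fregean characterisation of subdirectly irreducibles, Lemma~\ref{lem:1-fregean-si}, together with a direct check that masting preserves the relevant identities wherever it is needed.

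\textbf{Step 1: $\alg{A}^\oplus$ is an algebra of the right kind.} First I would check that $\alg{A}^\oplus$ is a Hilbert algebra, or a Brouwerian semilattice when $\alg{A}$ is one, with meets extended by $x\wedge 1 = x$. For Hilbert algebras this is the standard fact that the ordinal sum $\alg{A}\oplus\alg{2}$ of Hilbert algebras is a Hilbert algebra. I would verify (H1)--(H4) by cases according to how many of the arguments equal the new top $1$. If none does, the identity reduces to $\alg{A}$, since the only new values $\ra$ produces are $1$ in place of $\op$ whenever $x\le y$. For Brouwerian semilattices, residuation can be checked directly: $x\wedge y\le z$ iff $y\le x\ra z$, by cases on $z=1$ and on $x=1$.

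\textbf{Step 2: subdirect irreducibility.} In $\alg{A}^\oplus$ the element $\op$ is the unique coatom, since every $x\neq1$ satisfies $x\le\op$. Because $\var{Hi}$ and $\var{Br}$ are Fregean, Lemma~\ref{lem:1-fregean-si} shows that $\alg{A}^\oplus$ is subdirectly irreducible. Note that we do not claim $\alg{A}^\oplus\in\mathcal{V}$.

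\textbf{Step 3: the structure of $\alg{A}^\oplus$.} The set $A^\oplus\smallsetminus\{\op\}$ is closed under $\ra$ (and $\wedge$), because an arrow evaluates to $\op$ only if it already did so in $\alg{A}$ with $x\not\le y$. I would then show that the monolith $\mu$ is the congruence with blocks $\{\op,1\}$ and singletons, using Lemma~\ref{lem:1-orderable-si}: $\mu$ is the principal congruence $\theta(\op,1)$, whose $1$-block is the filter generated by $\op$, namely $\{\op,1\}$. The quotient map therefore simply identifies $\op$ with $1$, and comparing operations gives $\alg{A}^\oplus/\mu\cong\alg{A}$. To see $\alg{A}\cong A^\oplus\smallsetminus\{\op\}$, I would send the top of $\alg{A}$ to $1$ and fix every other element; the definition of $\ra^{\alg{A}^\oplus}$ makes this an isomorphism.

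\textbf{Step 4: every subdirectly irreducible algebra is a masting.} Let $\alg{B}\in\mathcal{V}$ be subdirectly irreducible. By Lemma~\ref{lem:1-fregean-si} it has a unique coatom $\op$, and by Lemma~\ref{lem:1-orderable-si} its monolith $\mu$ satisfies $1/\mu=\{\op,1\}$ with all other blocks singletons. Put $\alg{A}=\alg{B}/\mu\in\mathcal{V}$. Its top is the block $\{\op,1\}$, and the remaining elements correspond to $B\smallsetminus\{\op,1\}$.

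It remains to show that $\alg{B}\cong\alg{A}^\oplus$, which means checking that the arrow in $\alg{B}$ obeys the masting rule. For $x,y\ne1$, the quotient computes $x\ra y$ up to the identification of $\op$ with $1$. The value is $1$ exactly when $x\le y$. Otherwise it is determined by its image in $\alg{A}$, except for the ambiguity between $\op$ and $1$, and that ambiguity is resolved because $x\not\le y$ forces $x\ra y\ne1$. Together with $1\ra y=y$ and $x\ra1=1$, this gives exactly the masting table.

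I expect the main obstacle to be the exhaustive case check in Step~1, especially (H3) and (H4) when some arguments equal $\op$. I would try to shortcut it by realising $\alg{A}^\oplus$ as a subreduct. Embed $\alg{A}$ into a Heyting algebra $\alg{H}$, then add a new top above $\alg{H}$; the result is a Heyting algebra whose restriction gives $\alg{A}^\oplus$. This reduces Step~1 to the known fact that adding a new top to a Heyting algebra yields a Heyting algebra.
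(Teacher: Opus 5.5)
The paper quotes this characterisation without proof, so I am assessing your argument directly. Steps~1--3 are sound, provided that in Step~1 you actually use the subreduct shortcut. The case-check claim that an identity \emph{reduces to} $\alg{A}$ when no argument is $1$ does not work as stated. Evaluation in $\alg{A}$ fixes a value in $\alg{A}^\oplus$ only up to $\op$ versus $1$; for instance $\op\ra\op=1$ but $1\ra\op=\op$. So that argument gives the identities only modulo $\mu$.

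The genuine gap is in Step~4. Put $\alg{A}=\alg{B}/\mu$. For $\alg{B}\cong\alg{A}^\oplus$, the order of $\alg{B}$ on $B\smallsetminus\{1\}$ must agree with the order of $\alg{A}$. That is, for $x,y\neq 1$, $x/\mu\le y/\mu$ must imply $x\le y$. Since $x/\mu\le y/\mu$ means $x\ra y\in\{\op,1\}$, this says exactly that $x\ra y=\op$ forces $x=1$.

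You never prove this, and your resolution of the \emph{ambiguity} points the wrong way. When $x\not\le y$ but $(x\ra y)/\mu$ is the top of $\alg{A}$, you conclude $x\ra y=\op$. In $\alg{A}^\oplus$, however, such a pair has $x/\mu\le y/\mu$ and so receives the value $1$. Indeed, in any masting $u\ra v=\op$ only for $u=1$, $v=\op$. The \emph{otherwise} clause is used only when $u\not\le v$ in $\alg{A}$, and then $u\ra^{\alg{A}}v$ is not the top. So your table matches the masting table only once this case is shown to be empty.

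The missing step is short. If $x\neq 1$ then $x\le\op$, so $x\ra y=\op$ gives $x\le x\ra y$. Hence $x\ra y=x\ra(x\ra y)=(x\ra x)\ra(x\ra y)=1$ by (H1)--(H3), a contradiction. This is condition~(2) of Theorem~\ref{thm:oprema} for $s={\ra}$, and the paper uses it in the proof of Theorem~\ref{thm:mono}.

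With this lemma, $b\mapsto b/\mu$ is an order isomorphism of $B\smallsetminus\{1\}$ onto $A$, and your case analysis for $\ra$ goes through. In the $\var{Br}$ case, meets are then preserved because they are determined by the order.
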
  

The `furthermore' part of the above theorem makes the  
standard argument for local finiteness work, showing that all subvarieties
of $\var{Hi}$ and of $\var{Br}$ are locally finite. 

We come back to filters now. A \emph{filter} in a Hilbert algebra (or
a Brouwerian semilattice) $\alg{A}$ is a subset $F\subseteq A$ such that
$1\in F$,  and $a,a\ra b\in F$ imply $b\in F$ for any $a,b\in A$.
If $F$ is a filter, then $F = 1/\theta(F)$, where
$\theta(F)$ is the congruence generated by $\{(f,1): f\in F\}$. Conversely,
for each congruence $\vartheta\in\con{\alg{A}}$, the block
$1/\vartheta$ is a filter. These correspondences are mutually inverse maps
establishing an 
isomorphism between the lattice of filters and $\Con{\alg{A}}$. Precisely the
same holds for Brouwerian semilattices, where additionally filters in the above
sense coincide with semilattice filters. Furthermore, the following analogue of
the Prime Filter Theorem holds.

\begin{theorem}\label{thm:PrimeFilter}
Let $\alg{A}$ be a Hilbert algebra or a Brouwerian semilattice.
For any $a\in A$ and any $X\subseteq A$, if $\up{a}\cap X = \emptyset$, then
there is a $\mu\in\Cm{\alg{A}}$ such that $a\in 1/\mu$ and
$X\cap 1/\mu = \emptyset$.
\end{theorem}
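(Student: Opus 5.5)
The plan is to build a filter containing $a$ and avoiding $X$, enlarge it maximally by Zorn's lemma, and then argue that the maximal filter is the $1$-block of a completely meet-irreducible congruence. The congruence–filter correspondence recalled just before the statement ($F\mapsto\theta(F)$, with $F=1/\theta(F)$) transfers everything between filters and congruences.

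First, I check that $\up{a}$ is a filter. Clearly $1\in\up{a}$. If $a\le c$ and $a\le c\ra b$, then by (H3)
$$a\ra b=1\ra(a\ra b)=(a\ra c)\ra(a\ra b)=a\ra(c\ra b)=1,$$
so $b\in\up{a}$. The same computation works in $\var{Br}$ via its $\ra$-reduct. Now consider the family $\mathcal{F}$ of filters $F\supseteq\up{a}$ with $F\cap X=\emptyset$. It is nonempty by hypothesis and closed under unions of chains, since the union of a chain of filters is a filter. Zorn's lemma then gives a maximal $F\in\mathcal{F}$. Put $\mu\deq\theta(F)$. Then $a\in F=1/\mu$ and $X\cap 1/\mu=\emptyset$.

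The main obstacle is showing $\mu\in\cm{\alg{A}}$. For this I would show that the intersection of all filters properly containing $F$ still properly contains $F$. Every filter $G\supsetneq F$ contains $\up{a}$, so by maximality it meets $X$. What is needed is a single element $b\in X$ lying in every such $G$. Then all proper extensions of $F$ contain the filter generated by $F\cup\{b\}$, which yields the unique cover $\mu^+=\theta(F\cup\{b\})$.

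When $X=\{b\}$ this is immediate, since every $G\supsetneq F$ contains $b$. More generally it holds whenever the elements of $X$ met by the extensions of $F$ can be reduced to one common $b\in X$. The idea there is to use the EDPC description of the filter generated by $F\cup\{x\}$, namely $\{y: x\ra y\in F\}$, together with some directedness of $X$.

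For completely arbitrary $X$, however, this step cannot be carried out. In the $\{\ra,1\}$-reduct of the four-element Boolean algebra, take $a=1$ and $X=\{p,q\}$ (the two coatoms). Then $\up{a}\cap X=\emptyset$, but the only completely meet-irreducible filters are $\up{p}$ and $\up{q}$, and each of them meets $X$. So I would carry out the argument as above, where the decisive step is producing the single witness $b$. I would record that the statement should be read for $X$ a singleton (or $X$ directed in the appropriate sense), which covers the form in which such a separation result is normally applied.
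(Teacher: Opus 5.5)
Your counterexample is correct, so the statement as printed is false. The paper states it without proof (only as ``an analogue of the Prime Filter Theorem''), so there is no argument of the authors' to compare with.

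In the four-element Boolean algebra, viewed as a Hilbert algebra or as a Brouwerian semilattice, the filters are $\{1\}$, $\up{p}$, $\up{q}$ and $A$. Hence $\cm{\alg{A}}$ consists exactly of the two congruences with $1$-blocks $\up{p}$ and $\up{q}$, and both meet $\{p,q\}$. The failure is in fact generic. For $a=1$ and $X=A\smallsetminus\{1\}$ the conclusion forces $1/\mu=\{1\}$, i.e.\ the identity congruence would have to be completely meet-irreducible. So every algebra that is not subdirectly irreducible is a counterexample.

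Your Zorn argument for $X=\{b\}$ is correct, and it never uses that the starting filter is principal. For any filter $F_0$ and any $b\notin F_0$ it produces $\mu\in\cm{\alg{A}}$ with $F_0\subseteq 1/\mu$ and $b\notin 1/\mu$. That is the correct form of the result.

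Two of your closing remarks need adjusting. First, directedness does not rescue the general statement. In a finite algebra a nonempty up-directed $X$ has a greatest element $b$, and a filter omitting $b$ omits all of $X$, so this is just the singleton case. In infinite algebras it fails. In the implicative reduct of the G\"odel chain $[0,1]$ the filters are the nonempty upsets, and the completely meet-irreducible ones are exactly the sets $(c,1]$ with $0\le c<1$. For $a=\tfrac12$ and the chain $X=[0,\tfrac12)$, every such filter containing $a$ meets $X$; your maximal filter $[\tfrac12,1]$ is not completely meet-irreducible.

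Second, the singleton version does not cover the way this paper uses the result. In the proof of the bounded-width theorem it is invoked with $X=\{a_j:j\neq i\}$ for an antichain $\{a_1,\dots,a_{k+1}\}$. That step fails even under the hypotheses in force there: mast the eight-element Boolean algebra and take its three coatoms, and every completely meet-irreducible filter containing one coatom contains a second one. What the argument actually needs is the dual separation, which your proof does supply. The condition $q_{k,i}(a_1,\dots,a_{k+1})\neq 1$ says exactly that $a_i$ lies outside the filter generated by $\{a_j:j\neq i\}$. So there is $\mu_i\in\cm{\alg{A}}$ whose $1$-block contains every $a_j$ with $j\neq i$ but not $a_i$, and these $\mu_i$ still form an antichain of size $k+1$.
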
  

The lattices of subvarieties $\Lambda(\var{Hi})$ and $\Lambda(\var{Br})$,
are also intimately connected: in fact the latter embeds in the former.
To show this, we first recall a result of
Celani and Jansana~\cite[Thms.~6.7, 6.8, Prop.~7.2]{CJ12}. 

\begin{lemma}\label{lem:min-ext}
For every finite $\alg{A}\in\var{Hi}$ there exists a 
$\alg{B}\in\var{Br}$, unique up to isomorphism,
such that $\alg{A}\hookrightarrow \alg{B}^\ra$, and
for any $\alg{C}\in\var{Br}$ and any
$\varphi\colon \alg{A}\hookrightarrow \alg{C}^\ra$, there is a unique
$\overline{\varphi}\colon \alg{B}\hookrightarrow \alg{C}$ such that
$\overline{\varphi}|_{\alg{A}} = \varphi$.
Moreover, $\Con{\alg{A}} \cong \Con{\alg{B}}$. 
\end{lemma}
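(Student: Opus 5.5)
We build $\alg{B}$ concretely from the order-embedding of Theorem~\ref{thm:Heyting-from-cm} and then check the universal property. Since $\alg{A}$ is a finite Hilbert algebra, it is orderable with the congruence ordering coinciding with $\leq$. So $M\colon A\to \mathrm{Up}(\Cm{\alg{A}})$ is injective, and $\mathrm{Up}(\Cm{\alg{A}})$ is a Heyting algebra with $U\ra W=\dw{(U\smallsetminus W)}^\complement$.

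\textbf{Step 1: $M$ preserves $\ra$ and $1$.} Clearly $M(1)=\cm{\alg{A}}$. For $M(a\ra b)=M(a)\ra M(b)$ we use EDPC and the description of subdirectly irreducibles in Theorem~\ref{thm:Hi-si}. The condition $\mu\in M(a\ra b)$ means $a\ra b\in 1/\mu$. Passing to $\alg{A}/\mu$, which is subdirectly irreducible of the form $\alg{C}^\oplus$, we must show that this happens iff no $\nu\supseteq\mu$ in $\cm{\alg{A}}$ has $a\in 1/\nu$ and $b\notin 1/\nu$. The nontrivial direction uses Theorem~\ref{thm:PrimeFilter}. If $a\ra b\notin 1/\mu$, take the filter generated by $1/\mu\cup\{a\}$; by (H8) and the deduction theorem for Hilbert algebras it omits $b$. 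Extending it to a completely meet-irreducible $\nu$ omitting $b$ gives the witness.

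\textbf{Step 2: construct $\alg{B}$.} Let $\alg{B}$ be the $\wedge$-subsemilattice of $\mathrm{Up}(\Cm{\alg{A}})$ generated by $M(A)$, that is, all finite intersections $M(a_1)\cap\dots\cap M(a_k)$. It is closed under $\ra$ by the Heyting laws
$$(U_1\cap U_2)\ra W = U_1\ra(U_2\ra W),\qquad U\ra(W_1\cap W_2)=(U\ra W_1)\cap(U\ra W_2),$$
which reduce everything to $M(a)\ra M(b)=M(a\ra b)$. Hence $\alg{B}\in\var{Br}$ and $M\colon\alg{A}\hookrightarrow\alg{B}^\ra$.

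\textbf{Step 3: universal property.} This is the main obstacle. Given $\varphi\colon\alg{A}\hookrightarrow\alg{C}^\ra$, the only possible candidate is
$$\overline\varphi\Bigl(\textstyle\bigcap_i M(a_i)\Bigr)=\textstyle\bigwedge_i\varphi(a_i),$$
which forces uniqueness. Well-definedness and injectivity amount to
$$\textstyle\bigcap_i M(a_i)\subseteq\bigcap_j M(b_j)\iff \bigwedge_i\varphi(a_i)\le\bigwedge_j\varphi(b_j).$$
It suffices to treat a single $b$. Both sides are then equivalent to
$$a_1\ra(a_2\ra\cdots(a_k\ra b))=1.$$
In $\alg{C}$ this follows from (B4), and $\varphi$ preserves $\ra$ and $1$ and is injective. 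In $\mathrm{Up}(\Cm{\alg{A}})$ it follows from Step 1 together with the Heyting law $U\subseteq W\iff U\ra W=\text{top}$. Then $\overline\varphi$ preserves $\wedge$ by construction, and it preserves $\ra$ by the distributivity reductions of Step 2 applied on both sides. Uniqueness of $\alg{B}$ up to isomorphism follows from the universal property in the usual way: two such objects have mutually inverse extensions of the identity.

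\textbf{Step 4: congruences.} To get $\Con{\alg{A}}\cong\Con{\alg{B}}$, we compare $\Cm{}$ posets. Every $\mu\in\cm{\alg{A}}$ determines the filter $\{U\in B:\mu\in U\}$ of $\alg{B}$. This filter is completely meet-irreducible, since it is the preimage of an evaluation at a point. Conversely, every completely meet-irreducible filter of $\alg{B}$ restricts along $M$ to one of $\alg{A}$; here we use that $B$ consists of meets of elements of $M(A)$. These maps are mutually inverse and order-preserving, so $\Cm{\alg{A}}\cong\Cm{\alg{B}}$. Since both varieties are congruence-distributive and the algebras are finite, each congruence lattice is isomorphic to $\mathrm{Up}$ of its $\Cm{}$ poset, via the injective map $\HtM$, which is onto in the finite distributive case. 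This gives the isomorphism.
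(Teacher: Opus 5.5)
The paper does not prove this lemma. It imports it from Celani and Jansana \cite{CJ12}, where the free Brouwerian-semilattice extension is constructed for arbitrary Hilbert algebras. Your route is a self-contained finite version built from the paper's own tools: $\alg B$ is the meet-closure of $M(A)$ inside the Heyting algebra $\mathrm{Up}(\Cm{\alg A})$.

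Steps 1--3 are sound.
\begin{itemize}
\item Step 1 is the identity $M(a\ra b)=M(a)\ra M(b)$, already obtained in the proof of Theorem~\ref{thm:oprema}, (2)$\Rightarrow$(3). Theorem~\ref{thm:PrimeFilter} as stated only handles principal filters $\up{a}$, so apply it in $\alg A/(\mu\vee\theta(1,a))$, or just use finiteness.
\item In Step 3, reducing well-definedness, injectivity and preservation of $\ra$ to the single condition $a_1\ra(\cdots\ra(a_k\ra b))=1$ is exactly the right move.
\end{itemize}
What this buys over the citation is an explicit description of $\alg B$ and independence from the external reference.

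The step that fails as written is in Step 4. You claim $F_\mu=\{U\in B:\mu\in U\}$ is completely meet-irreducible ``since it is the preimage of an evaluation at a point''. Evaluation at $\mu$ is not a homomorphism for $\ra$, because whether $\mu\in U\ra W$ depends on all of $\up{\mu}$.

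For an arbitrary subalgebra of $\mathrm{Up}(P)$, point filters need not be meet-irreducible at all. Take $P=\{w,p,q\}$ with $w<p$ and $w<q$, and the subalgebra $\{\emptyset,\{p\},\{q\},P\}\cong\alg{2}^2$. Its filter at $w$ is $\{P\}$, which is the intersection of the two maximal filters.

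What rescues the claim is a property specific to your $B$:
\begin{itemize}
\item Pick $a\in A$ with $(1,a)\in\mu^+\smallsetminus\mu$. Then $\mu\notin M(a)$.
\item Every $\nu\supsetneq\mu$ contains $\mu^+$, so $\up{\mu}\smallsetminus\{\mu\}\subseteq M(a)$.
\item Hence $U\ra M(a)\in F_\mu$ for every $U\in B\smallsetminus F_\mu$.
\item By modus ponens, every filter strictly containing $F_\mu$ contains $M(a)\notin F_\mu$. So $F_\mu$ has a unique cover.
\end{itemize}

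With this repair the rest of Step 4 is fine. Your converse direction is correct as stated. The coatom of $\alg B/G$ is a finite meet of classes $M(a_i)/G$, so it equals one of them. Thus the image of $\alg A/M^{-1}(G)$ contains that coatom and is subdirectly irreducible.

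One small correction: $\HtM$ is order-reversing. So each of $\Con{\alg A}$ and $\Con{\alg B}$ is dually isomorphic to the upsets of its poset of completely meet-irreducibles. This still yields $\Con{\alg A}\cong\Con{\alg B}$.

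A shortcut is available. By Theorem~\ref{thm:repres}(1), for each $\mu\in\cm{\alg A}$ there is an irreducible $c$ with $S(c)=\{\mu\}$, so $M(c)=\cm{\alg A}\smallsetminus\dw{\mu}$. Every upset is an intersection of such sets, so your $\alg B$ is in fact all of $\mathrm{Up}(\Cm{\alg A})\cong\alg{Acm}(\Cm{\alg A})$. There the point filters are visibly completely meet-irreducible, with witness $\up{\mu}\smallsetminus\{\mu\}$. Theorem~\ref{thm:repres}(2) then gives $\Con{\alg A}\cong\Con{\alg B}$ at once. This is also the identification used later in Theorem~\ref{thm:free-Br-semi}.
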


For any Hilbert algebra $\alg{A}$, we will write $\langle \alg{A}\rangle$
to denote the Brouwerian semilattice $\alg{B}$ with the properties from the
above lemma. We will refer to $\langle \alg{A}\rangle$
as the \emph{free extension} of $\alg{A}$. Note that for any
$\alg{A}\in\var{Hi}$ and any $\alg{B}\in\var{Br}$ we have
$\alg{A}\in S(\alg{B}^\ra)$ if and only if
$\langle \alg{A}\rangle\in S(\alg{B})$.

Next, since in $\var{Hi}$ and $\var{Br}$ any identity $s=t$ is
equivalent to a pair of 
identities $s\ra t = 1$ and $t\ra s=1$, we can consider only identities of the form
$t=1$. For any $t\in\mathrm{Term}_{\{\wedge,\ra,1\}}$ the identity $t = 1$ is 
equivalent over $\var{Br}$ to a finite set of identities
$\{s = 1: s\in H(t)\}$, where $H(t)\subseteq \mathrm{Term}_{\{\ra,1\}}$ is obtained
by a recursive rewriting procedure based on the following:
\begin{enumerate}
\item If $t = t_1\wedge t_2$, then $t=1\iff t_1=1 \mathbin{\&} t_2 = 1$. 
\item If $t = s\ra (t_1\wedge t_2)$, then
$t=1\iff s\ra t_1=1 \mathbin{\&} s\ra t_2 = 1$.  
\item If $t = (t_1\wedge t_2)\ra s$, then
$t=1\iff t_1\ra(t_2\ra s)=1$. 
\end{enumerate}
Obviously, for any $\mathcal{V}\subseteq\var{Br}$ and any term
$t\in \mathrm{Term}_{\{\wedge,\ra,1\}}$ we have
$\mathcal{V}\models t=1$ if and only if $\mathcal{V}\models s=1$ for all $s\in H(t)$.

Hence, any variety $\mathcal{V}\in\Lambda(\var{Br})$ has an equational base relative to
$\var{Br}$ in the signature of $\var{Hi}$. Denoting this base by
$\mathrm{Eq}^\ra(\mathcal{K})$, we define
$f\colon \Lambda(\var{Br})\to \Lambda(\var{Hi})$
putting $f(\mathcal{V}) = \var{Hi}\cap
\mathrm{Mod}(\mathrm{Eq}^\ra(\mathcal{V}))$.

\begin{lemma}\label{lem:f-charact}
Let $f\colon \Lambda(\var{Br})\to \Lambda(\var{Hi})$ be the map defined above.
Then $f(\mathcal{V}) = S(\mathcal{V}^\ra)$, for any $\mathcal{V}\in\var{Br}$.   
\end{lemma}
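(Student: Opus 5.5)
We prove the two inclusions $S(\mathcal{V}^\ra)\subseteq f(\mathcal{V})$ and $f(\mathcal{V})\subseteq S(\mathcal{V}^\ra)$ separately. The first is routine; the second is where the real content lies.

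\emph{First inclusion.} Let $\alg{A}\in S(\mathcal{V}^\ra)$, so $\alg{A}\hookrightarrow\alg{B}^\ra$ for some $\alg{B}\in\mathcal{V}$. Each identity $s=1$ in $\mathrm{Eq}^\ra(\mathcal{V})$ is a $\{\ra,1\}$-identity valid in $\mathcal{V}$. It therefore holds in $\alg{B}^\ra$, and since it is an identity it is preserved under subalgebras, so it holds in $\alg{A}$. Also $\alg{A}\in\var{Hi}$, because reducts of Brouwerian semilattices are Hilbert algebras. Hence $\alg{A}\in f(\mathcal{V})$.

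\emph{Second inclusion.} Let $\alg{A}\in f(\mathcal{V})$. We use local finiteness of $\var{Hi}$ to reduce to the finite case.
\begin{itemize}
\item[(a)] $S(\mathcal{V}^\ra)$ is closed under ultraproducts: ultraproducts commute with taking reducts and subalgebras, and $\mathcal{V}$ is a variety. It is therefore a universal class.
\item[(b)] Consequently $\alg{A}$ belongs to $S(\mathcal{V}^\ra)$ as soon as every finitely generated subalgebra of $\alg{A}$ does. Indeed, $\alg{A}$ embeds into an ultraproduct of its finitely generated subalgebras.
\item[(c)] Each finitely generated subalgebra is finite by Diego's theorem, and it still lies in $f(\mathcal{V})$. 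So we may assume $\alg{A}$ is finite.
\end{itemize}

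\emph{Finite case.} For finite $\alg{A}$, take the free extension $\langle\alg{A}\rangle$ from Lemma~\ref{lem:min-ext}. Since $\alg{A}\hookrightarrow\langle\alg{A}\rangle^\ra$, it suffices to show $\langle\alg{A}\rangle\in\mathcal{V}$. Suppose $\mathcal{V}\models t=1$ with $t\in\mathrm{Term}_{\{\wedge,\ra,1\}}$. By the rewriting procedure, this is equivalent over $\var{Br}$ to the family $s=1$ for $s\in H(t)$, and each such $s=1$ holds in $\alg{A}$.

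We must show that $t=1$ holds in $\langle\alg{A}\rangle$. We argue by contraposition, using that $\langle\alg{A}\rangle$ is generated as a semilattice by $A$. I expect this to follow from the minimality in Lemma~\ref{lem:min-ext}: the $\wedge$-closure of $A$ inside $\langle\alg{A}\rangle$ is already a Brouwerian subsemilattice into which $\alg{A}$ embeds, so the uniqueness forces it to be all of $\langle\alg{A}\rangle$.

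Now suppose $t(\bar b)\neq1$ for some $\bar b$ in $\langle\alg{A}\rangle$. Write each $b_j$ as a meet $\bigwedge_k a_{jk}$ of elements of $A$, and substitute these meets for the variables of $t$, obtaining a term $t'$ in the variables $\bar a$. Then $t'(\bar a)\neq 1$. Since $t=1$ holds in $\mathcal{V}$ and $\mathcal{V}$ is closed under substitution, $t'=1$ also holds in $\mathcal{V}$. Applying the rewriting procedure to $t'$ gives pure $\{\ra,1\}$-terms $s\in H(t')$, each satisfying $\mathcal{V}\models s=1$, so $s=1$ belongs to $\mathrm{Eq}^\ra(\mathcal{V})$ and hence holds in $\alg{A}$. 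But $t'(\bar a)=1$ in $\langle\alg{A}\rangle$ if and only if every $s(\bar a)=1$ there. Because $\ra$ on $A$ agrees with $\ra$ in $\langle\alg{A}\rangle$, these values are computed in $\alg{A}$, and they all equal $1$. This is a contradiction, so $\langle\alg{A}\rangle\in\mathcal{V}$ and $\alg{A}\in S(\mathcal{V}^\ra)$.

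\emph{Main obstacle.} The delicate step is the justification that $\langle\alg{A}\rangle$ is $\wedge$-generated by $A$. If Lemma~\ref{lem:min-ext}'s universal property does not give this directly, I would fall back on a concrete construction. One option is to realize $\langle\alg{A}\rangle$ inside $\mathrm{Up}(\Cm{\alg{A}})$ via Theorem~\ref{thm:Heyting-from-cm}, closing $M(A)$ under intersections; this set is closed under $\ra$ by (B5) together with the identity $(x\wedge y)\ra z=x\ra(y\ra z)$.
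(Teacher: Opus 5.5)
Your proof is correct, but it takes a different route from the paper's. The paper argues purely with class operators. It reads $f(\mathcal{V})$ as $HSP(\mathcal{V}^\ra)$, which is the same reading of $\mathrm{Eq}^\ra(\mathcal{V})$ as the full set of $\{\ra,1\}$-identities of $\mathcal{V}$ that you use when you say $s=1$ belongs to $\mathrm{Eq}^\ra(\mathcal{V})$. It then uses three facts:
\begin{enumerate}
\item $P$ commutes with taking reducts, so $SP(\mathcal{V}^\ra)=S(\mathcal{V}^\ra)$;
\item CEP in $\var{Hi}$ gives $HS=SH$;
\item $H(\mathcal{V}^\ra)=\mathcal{V}^\ra$, because a Brouwerian semilattice and its $\{\ra,1\}$-reduct have the same filters, hence the same congruences.
\end{enumerate}
That argument takes three lines and applies to arbitrary, possibly infinite, algebras directly, with no reduction to the finite case.

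You avoid CEP and the congruence identification. In exchange you need a reduction to finite algebras (ultraproducts plus Diego's theorem), the free extension of Lemma~\ref{lem:min-ext}, and the fact that the rewriting $t\mapsto H(t)$ is a pointwise equivalence in every Brouwerian semilattice. That last fact is true, since the rewriting only uses the identities (B4), (B5) and the fact that a meet equals $1$ iff each meetand does.

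The obstacle you flagged is not a real one. Let $C$ be the set of finite meets of elements of $A$. It is closed under $\ra$ by (B4), (B5) and the $\ra$-closure of $A$, so it is a subalgebra $\alg{C}$ with $\alg{A}\le\alg{C}^\ra$. The universal property then embeds $\langle\alg{A}\rangle$ into $\alg{C}$, and $C\subseteq\langle\alg{A}\rangle$. Since $C$ is finite, equality follows; the uniqueness clause also gives it. You can drop the hedge.

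What your route buys is a direct proof that $\alg{A}\in f(\mathcal{V})\iff\langle\alg{A}\rangle\in\mathcal{V}$ for finite $\alg{A}$. This is exactly what the proof of Theorem~\ref{thm:subvariety-isom} uses, and the paper obtains it only by combining this lemma with the remark that $\alg{A}\in S(\alg{B}^\ra)$ iff $\langle\alg{A}\rangle\in S(\alg{B})$.
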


\begin{proof}
Note the following: (i) reducts commute with direct products, (ii)
$\var{Hi}$ has CEP, (iii) $\alg{B}^\ra/\varphi = (\alg{B}/\varphi)^\ra$ for any
$\varphi\in\Con{\alg{B}}$, by Lemma~\ref{lem:min-ext}. Hence
$$
f(\mathcal{V}) = HSP(\mathcal{V}^\ra) = HS(\mathcal{V}^\ra) = SH(\mathcal{V}^\ra)
= S(\mathcal{V}^\ra)
$$
where the first equality is definitional, the second follows by (i), the third
by (ii) and the fourth by (iii).
\end{proof}  

In particular, $S(\mathcal{V}^\ra)$ is a variety. In the case of Hilbert
algebras this holds trivially since $S(\mathcal{V}^\ra)$ is a quasivariety and
$\var{Hi}$ is hereditarily structurally complete, but we will apply
Lemma~\ref{lem:f-charact} to cases where this is no longer true
(see Section~\ref{sec:zero}).

\begin{theorem}\label{thm:subvariety-isom}
The lattice of subvarieties of\/ $\var{Br}$ embeds
into the lattice of subvarieties of\/ $\var{Hi}$.
\end{theorem}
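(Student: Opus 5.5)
The plan is to show that the map $f\colon\Lambda(\var{Br})\to\Lambda(\var{Hi})$ defined above is an order embedding that also preserves joins and meets. By Lemma~\ref{lem:f-charact} we may work with the description $f(\mathcal{V}) = S(\mathcal{V}^\ra)$ throughout.

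First I would prove that $f$ is an order embedding. Monotonicity is immediate from $f(\mathcal{V})=S(\mathcal{V}^\ra)$. For order reflection, suppose $S(\mathcal{V}^\ra)\subseteq S(\mathcal{W}^\ra)$ and take $\alg{B}\in\mathcal{V}$. Since every subvariety of $\var{Br}$ is locally finite, $\mathcal{V}$ is generated by its finite members. So we may assume $\alg{B}$ is finite.

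For such a $\alg{B}$, the reduct $\alg{B}^\ra$ lies in $S(\mathcal{W}^\ra)$. Hence $\alg{B}^\ra\hookrightarrow \alg{C}^\ra$ for some $\alg{C}\in\mathcal{W}$. By the remark after Lemma~\ref{lem:min-ext}, $\langle\alg{B}^\ra\rangle\in S(\alg{C})\subseteq\mathcal{W}$. The key observation is that $\langle\alg{B}^\ra\rangle\cong\alg{B}$ for finite $\alg{B}\in\var{Br}$. Since $\alg{B}^\ra\hookrightarrow\alg{B}^\ra$, the universal property gives a $\var{Br}$-embedding $\langle\alg{B}^\ra\rangle\hookrightarrow\alg{B}$ whose image contains $B$, so it is onto. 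Thus $\alg{B}\in\mathcal{W}$, which yields $\mathcal{V}\subseteq\mathcal{W}$. In particular $f$ is injective.

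Next I would show that $f$ preserves meets and joins. For meets, $f(\mathcal{V}\cap\mathcal{W})\subseteq f(\mathcal{V})\cap f(\mathcal{W})$ by monotonicity. Conversely, by local finiteness it suffices to handle a finite $\alg{A}$ in $f(\mathcal{V})\cap f(\mathcal{W})$. Then $\alg{A}\in S(\alg{B}^\ra)$ with $\alg{B}\in\mathcal{V}$ gives $\langle\alg{A}\rangle\in S(\alg{B})\subseteq\mathcal{V}$, and symmetrically $\langle\alg{A}\rangle\in\mathcal{W}$. So $\alg{A}\in S(\langle\alg{A}\rangle^\ra)\subseteq f(\mathcal{V}\cap\mathcal{W})$.

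For joins, I would use the equational description $f(\mathcal{V})=\var{Hi}\cap\mathrm{Mod}(\mathrm{Eq}^\ra(\mathcal{V}))$. The join $f(\mathcal{V})\vee f(\mathcal{W})$ is axiomatized by the $\{\ra,1\}$-identities true in both $\mathcal{V}^\ra$ and $\mathcal{W}^\ra$. These are exactly the $\{\ra,1\}$-identities true in $\mathcal{V}\vee\mathcal{W}$. Because the rewriting procedure $H$ reduces every $\var{Br}$-identity to $\{\ra,1\}$-identities, these identities form $\mathrm{Eq}^\ra(\mathcal{V}\vee\mathcal{W})$, so the join is $f(\mathcal{V}\vee\mathcal{W})$.

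I expect the main obstacle to be the identity $\langle\alg{B}^\ra\rangle\cong\alg{B}$ and the care needed in passing to finite algebras. Everything else is formal once Lemma~\ref{lem:min-ext} and local finiteness are available. If only an order embedding is intended, the first part already suffices.
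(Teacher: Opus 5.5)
Your proof is correct. For injectivity and meets it follows the paper's lines, since both rest on $\alg{A}\in f(\mathcal{V})\iff\langle\alg{A}\rangle\in\mathcal{V}$ for finite $\alg{A}$. Your order-reflection step also makes explicit something the paper's one-line injectivity argument leaves implicit. Knowing $\alg{A}^\ra\notin\mathcal{W}^\ra$ only yields $\alg{A}^\ra\notin S(\mathcal{W}^\ra)=f(\mathcal{W})$ once $\alg{A}$ is chosen finite and one knows $\langle\alg{A}^\ra\rangle\cong\alg{A}$.

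The genuine difference is the join step. The paper argues semantically. For a finite subdirectly irreducible $\alg{A}\in f(\mathcal{V}\vee\mathcal{W})$, it uses $\Con{\alg{A}}\cong\Con{\langle\alg{A}\rangle}$ to make $\langle\alg{A}\rangle$ subdirectly irreducible in $\mathcal{V}\vee\mathcal{W}$. Congruence distributivity and J\'onsson's Lemma then place it in $\mathcal{V}$ or in $\mathcal{W}$. You argue syntactically: the theory of a join is the intersection of the theories, and passing to $\{\ra,1\}$-fragments commutes with intersections.

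Your route needs neither CD, nor J\'onsson's Lemma, nor the congruence clause of Lemma~\ref{lem:min-ext}. It shows that join preservation holds for any map of the form $\mathcal{V}\mapsto HSP(\mathcal{V}^\ra)$. Together with your meet argument it works for arbitrary joins and meets, so $f$ is even a complete lattice embedding; J\'onsson's Lemma handles only finite joins directly. The paper's route, in exchange, stays within the subdirectly-irreducible analysis that drives the rest of the article.

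One small point of justification concerns the join step. What you actually use is that $f(\mathcal{U})$ is cut out of $\var{Hi}$ by the \emph{entire} $\{\ra,1\}$-theory of $\mathcal{U}$, i.e.\ $f(\mathcal{U})=S(\mathcal{U}^\ra)=HSP(\mathcal{U}^\ra)$ by Lemma~\ref{lem:f-charact}. The rewriting procedure $H$ only shows that this theory is a base relative to $\var{Br}$, which is not what is needed. An arbitrary $\{\ra,1\}$-base relative to $\var{Br}$ would not suffice. For example, \eqref{eq:P} axiomatises the variety generated by Brouwerian chains relative to $\var{Br}$. Relative to $\var{Hi}$, however, it defines the strictly larger variety generated by pure Hilbert algebras.
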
  

\begin{proof}
Note that $\alg{A}\in f(\mathcal{V})\iff \langle\alg{A}\rangle\in \mathcal{V}$. 
We will show that $f$ is injective and preserves lattice operations.
For injectivity, let $\mathcal{V},\mathcal{W}\in\Lambda(\var{Br})$ be distinct.
Without loss there is an algebra $\alg{A}\in \mathcal{V}\setminus\mathcal{W}$. Then,
$\alg{A}^{\ra} \in \mathcal{V}^\ra\setminus\mathcal{W}^\ra$.

Next, consider $\mathcal{V}\cap\mathcal{W}$. Then
$\alg{A}\in f(\mathcal{V}\cap\mathcal{W}) \iff
\langle\alg{A}\rangle \in \mathcal{V}\cap\mathcal{W}
\iff \langle\alg{A}\rangle \in \mathcal{V} \text{ and }
\langle\alg{A}\rangle \in \mathcal{W}
\iff \alg{A}\in f(\mathcal{V})\cap f(\mathcal{W})$.

Finally, consider $\mathcal{V}\vee\mathcal{W}$. By monotonicity we have
$f(\mathcal{V})\vee f(\mathcal{W})\subseteq f(\mathcal{V}\vee\mathcal{W})$.
For converse, it suffices to show that all subdirectly irreducible
algebras from
$(f(\mathcal{V}\vee\mathcal{W}))_{SI}\subseteq
f(\mathcal{V})\vee f(\mathcal{W})$. Let $\alg{A}\in
f(\mathcal{V}\vee\mathcal{W})_{SI}$; then
$\langle\alg{A}\rangle \in \mathcal{V}\vee\mathcal{W}$.
Since $\Con{\alg{A}}\cong \Con{\langle\alg{A}\rangle}$
the algebra $\langle\alg{A}\rangle$ is also subdirectly irreducible,
so $\langle\alg{A}\rangle \in (\mathcal{V}\vee\mathcal{W})_{SI}
= (\mathcal{V})_{SI}\cup(\mathcal{W})_{SI}$ by CD and J\'onsson's Lemma.
Hence $\langle\alg{A}\rangle \in \mathcal{V}_{SI}$
or $\langle\alg{A}\rangle \in \mathcal{W}_{SI}$, and thus, without loss of generality
$\alg{A}\in f(\mathcal{V})\subseteq f(\mathcal{V})\vee f(\mathcal{W})$.
\end{proof}  

\subsection{Pure Hilbert algebras}\label{sec:pure}

For any poset $P$, define an algebra $\algov{P} = (\ov{P}; \ra, 1)$ where
$\ov{P} = P\uplus \{1\}$ and
$$
x\ra y =
\begin{cases} 
1 & \text{ if } x\leq y\\
y & \text{ otherwise} 
\end{cases}
$$
for all $x,y\in \ov{P}$. It is easy to see that for any poset $P$, the algebra
$\algov{P}$ is a Hilbert algebra. These algebras are called \emph{pure Hilbert
  algebras} in~\cite{BB06}, where it is shown (see~\cite[Theorems~2.2
and~2.3]{BB06}) that  
the subdirectly irreducible members of the variety generated by pure Hilbert
algebras are themselves pure, and the variety is finitely based. No 
equational base is given explicitly, so we give one here.

\begin{theorem}\label{thm:pure-base}
The variety generated by pure Hilbert algebras is based, relative to Hilbert
algebras, by the identity
\begin{equation}\label{eq:P}\tag{P}
\bigl((x\ra y)\ra z\bigr)\ra \Bigl(\bigl(((x\ra y)\ra y)\ra z\bigr)\ra z\Bigr) = 1.
\end{equation}  
\end{theorem}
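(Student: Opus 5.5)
We need to show two things. First, every pure Hilbert algebra $\algov{P}$ satisfies (P). Second, every Hilbert algebra satisfying (P) lies in the variety generated by pure Hilbert algebras. Since $\var{Hi}$ is congruence distributive and (P) is an identity, for the second part it suffices to show that every subdirectly irreducible Hilbert algebra satisfying (P) is pure. Every such algebra is a subalgebra, hence in the variety, of a pure one. By Theorem~\ref{thm:Hi-si}, such an algebra has the form $\alg{A}^\oplus$.

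\emph{Soundness.} We check (P) in $\algov{P}$ by cases, using the fact that $u\ra v\in\{1,v\}$ always holds. Write $a=x\ra y$, so $a\in\{1,y\}$, and $b=a\ra y$.

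If $a=1$, then $b=y$ and $a\ra z=z$. The identity becomes $z\ra((y\ra z)\ra z)=1$, which is an instance of (H5).

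If $a=y$ with $y\neq 1$ (so $x\not\leq y$), then $b=1$ and $b\ra z=z$. So $(b\ra z)\ra z=1$, and the whole expression is $(a\ra z)\ra 1=1$.

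Hence (P) holds in every pure algebra.

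\emph{Completeness.} Let $\alg{B}=\alg{A}^\oplus$ be subdirectly irreducible and satisfy (P), with coatom $\op$. We aim to show that $u\ra v\in\{1,v\}$ for all $u,v$, which means $\alg{B}$ is pure: for $P=B\smallsetminus\{1\}$ with the induced order, $\alg{B}=\algov{P}$.

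Suppose instead that there are $x,y$ with $a=x\ra y\notin\{1,y\}$. We instantiate (P) with $z=\op$. Since $a\neq 1$ we have $a\leq\op$, so $a\ra\op=1$, and (P) reduces to $((a\ra y)\ra\op)\ra\op=1$, i.e.\ $(a\ra y)\ra\op\leq\op$, which is no information. So $z=\op$ is the wrong choice.

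Instead we take $z=y$. Then $(a\ra y)\ra y$ is $((x\ra y)\ra y)\ra y=x\ra y=a$, by the standard Hilbert identity $((u\ra v)\ra v)\ra v=u\ra v$. Thus (P) yields $(a\ra y)\ra(a\ra y)=1$, which is trivial. This choice fails too.

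The main obstacle is therefore choosing the substitution that forces the contradiction. The plan is to exploit subdirect irreducibility through the monolith: in $\alg{B}$ we have $u\ra\op\in\{1,\op\}$, since $u\ra\op=\op$ exactly when $u=1$, and otherwise it is $1$. We would substitute $z=\op$ together with replacing $y$ by suitable elements, or pass to the quotient by filters $\up{c}$ so that $x\ra y$ becomes the coatom. Concretely, we pick a filter $F$ maximal with respect to $a\notin F$. The quotient is subdirectly irreducible with $a/F$ as the coatom, and it still satisfies (P) and fails purity provided $y\notin F$ and $a/F\neq y/F$.

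In that quotient, with $a=\op$, substitute $z=y$ and use $a\ra y\in\{?\}$. We then compute $\op\ra y$, aiming to show that (P) forces $\op\ra y=y$ or $\op\ra y=1$. From this we derive $a=y$, contradicting the assumption. The remaining work, and the expected difficulty, is verifying this coatom-level computation and checking that the properties are preserved under the quotient.
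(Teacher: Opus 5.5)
Your soundness check is correct. The completeness direction has a genuine gap: you never reach a contradiction, and you discard the substitution that works. The choice $z=\op$ is exactly the paper's substitution, and you stopped one step short.

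After reducing \eqref{eq:P} to $((a\ra y)\ra\op)\ra\op=1$ with $a=x\ra y$, you must note that $a\ra y=(x\ra y)\ra y\neq 1$. Otherwise $x\ra y\leq y$, and together with $y\leq x\ra y$ (an instance of (H5)) this gives $x\ra y=y$, contrary to $a\notin\{1,y\}$. Hence $a\ra y\leq\op$, so $(a\ra y)\ra\op=1$. The instance of \eqref{eq:P} with $z=\op$ then evaluates to $1\ra(1\ra\op)=\op\neq 1$, so \eqref{eq:P} fails in $\alg{B}$. This is the paper's argument, phrased there as $b<a\ra b<1$. Put differently, the condition $(a\ra y)\ra\op\leq\op$ that you called ``no information'' says precisely $a\ra y=1$. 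That follows from the fact you noted yourself later: $u\ra\op\neq 1$ only when $u=1$.

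Your fallback plan cannot be repaired. You pass to a quotient by a filter maximal with respect to $a\notin F$ and then substitute $z=y$. But, as you computed, the instance of \eqref{eq:P} with $z=y$ reduces to $(a\ra y)\ra(a\ra y)=1$ in every Hilbert algebra. So it can never produce a contradiction in any quotient. As written, the proposal leaves the decisive step as ``the expected difficulty'' and does not prove the statement. A minor point: reducing to subdirectly irreducible algebras needs only Birkhoff's subdirect representation theorem, not congruence distributivity.
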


\begin{proof}
It is easy to see that~\eqref{eq:P} holds in every pure Hilbert
algebra. Conversely, let $\alg{A}$ be a subdirectly irreducible Hilbert algebra
which is not pure.
Then, there are elements $a, b\in A$ such
that $b < a\ra b < 1$. Therefore $b < a\ra b \leq \op$ and
$(a\ra b)\ra b \leq \op$. Hence
$\bigl((a\ra b)\ra \op \bigr)\ra \Bigl(\bigl(((a\ra b)\ra b)\ra \op \bigr)\ra
\op\Bigr) = 1 \ra  (1\ra \op ) = \op$, falsifying~\eqref{eq:P}.
\end{proof}

Pure Hilbert algebras witness the fact that the embedding
of $\Lambda(\var{Br})$ into $\Lambda(\var{Hi})$ is not a surjection.
For consider a subdirectly irreducible pure Hilbert algebra $\alg{A}$ which is not
linearly ordered, and let $\alg{B}$ be a subdirectly irreducible Brouwerian
semilattice such that $\alg{A}$ is a subreduct of $\alg{B}$.
Then there are $a,b\in A$ such that $a$ and $b$ are incomparable, so
we have $b< a\ra (a\wedge b) = a\ra b < 1$ in $\alg{B}$. Hence~\eqref{eq:P} fails in
$\alg{B}$.

\section{Congruence orderable subtractive varieties}\label{sec:orderable}

To place our results in a broader context, we give
some background on varieties which are congruence orderable 
and subtractive. These two properties are in general independent
(groups are subtractive but not orderable, and pointed sets are orderable but not
subtractive) but taken together they imply Fregeanity (see for example,
Theorem~2.1 of~\cite{Agl01}, modulo correcting an obvious typo in the proof: the
first displayed formula there should read $s(x,y) = 0 = s(y,x) \iff x = y$).

We will state something slightly stronger. The theorem below can be proved in
exactly the same way as Proposition~3.6 of~\cite{Slo12}, by noticing only that
the assumption of Fregeanity in that proposition is too strong: the proof uses only
orderability, and Fregeanity follows. 

\begin{theorem}\label{thm:1-orderable-subtr}
If a variety $\mathcal{V}$ is orderable and has a subtractive term $s$, then
for every $\alg{A}\in\mathcal{V}$ and every $a,b\in A$ we have
$\theta(a,b) = \theta(1,s(a,b))\vee\theta(1,s(b,a))$. In particular,
$\mathcal{V}$ is Fregean.
\end{theorem}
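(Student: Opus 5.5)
We need to show: for $\mathcal{V}$ orderable with subtractive term $s$, $\theta(a,b)=\theta(1,s(a,b))\vee\theta(1,s(b,a))$, and then deduce Fregeanity.

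The inclusion $\supseteq$ is immediate. Modulo $\theta(a,b)$ we have $s(a,b)\equiv s(a,a)=1$ and $s(b,a)\equiv s(b,b)=1$. Hence both principal congruences on the right lie in $\theta(a,b)$.

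For $\subseteq$, put $\gamma=\theta(1,s(a,b))\vee\theta(1,s(b,a))$ and pass to $\alg{B}=\alg{A}/\gamma$. Since $\mathcal{V}$ is a variety, $\alg{B}\in\mathcal{V}$, and in $\alg{B}$ we have $s(\bar a,\bar b)=1=s(\bar b,\bar a)$. It suffices to show that this forces $\bar a=\bar b$ in any $\alg{B}\in\mathcal{V}$. By orderability of $\alg{B}$, it is enough to prove $\theta(1,\bar a)=\theta(1,\bar b)$, i.e.\ $\bar a\leq\bar b$ and $\bar b\leq\bar a$ in the congruence ordering.

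To get $\theta(1,\bar b)\subseteq\theta(1,\bar a)$, work modulo $\theta(1,\bar a)$, where $\bar a\equiv 1$. Then $\bar b = s(1,\bar b)\equiv s(\bar a,\bar b)=1$, using the subtractive identity $s(1,x)=x$ together with the hypothesis $s(\bar a,\bar b)=1$. Thus $(1,\bar b)\in\theta(1,\bar a)$. The symmetric argument, using $s(\bar b,\bar a)=1$, gives the reverse inclusion. Orderability then yields $\bar a=\bar b$, so $(a,b)\in\gamma$ and $\theta(a,b)\subseteq\gamma$.

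This step is where subtractivity and orderability interact, and it is the heart of the proof. It uses only orderability, not 1-regularity, which matches the remark preceding the statement.

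For Fregeanity we must check 1-regularity. Suppose $1/\alpha=1/\beta$ and $(a,b)\in\alpha$. Then $s(a,b)\mathrel{\alpha}s(a,a)=1$, so $s(a,b)\in 1/\alpha=1/\beta$, and likewise $s(b,a)\in1/\beta$. By the displayed formula, $(a,b)\in\theta(a,b)=\theta(1,s(a,b))\vee\theta(1,s(b,a))\subseteq\beta$. Hence $\alpha\subseteq\beta$, and by symmetry $\alpha=\beta$. Together with orderability this gives Fregeanity.

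I do not anticipate a genuine obstacle. The only subtle point is the reduction via the quotient: orderability must be applied in the quotient algebra $\alg{B}$, which is legitimate because it is a property of every member of the variety.
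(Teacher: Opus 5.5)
Your proof is correct and is essentially the argument the paper points to, namely Proposition~3.6 of~\cite{Slo12} run with orderability in place of Fregeanity: $\supseteq$ comes from $s(x,x)=1$, and $\subseteq$ comes from passing to $\alg{A}/\gamma$ and using $s(1,x)=x$ to get $\theta(1,\bar a)=\theta(1,\bar b)$, so that orderability of the quotient gives $\bar a=\bar b$. Your derivation of $1$-regularity from the displayed formula is also exactly how ``Fregeanity follows'' in the paper's remark.
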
  

The next result gives sufficient conditions for an orderable subtractive
variety $\mathcal{V}$ to be congruence permutable (CP) and congruence
distributive (CD).

\begin{theorem}\label{thm:subtr-order-CD-CP} 
Let $\mathcal{V}$ be an orderable variety with a subtractive term $s(x,y)$.
\begin{enumerate}
\item If\/ $\mathcal{V}\models s(x,1) = x$, then $\mathcal{V}$ is CP.
\item If\/ $\mathcal{V}\models s(x,1) = 1$, then $\mathcal{V}$ is CD.
\end{enumerate}
\end{theorem}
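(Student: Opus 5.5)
The plan is to reduce both statements to the behaviour of $1$-blocks (filters). The previous theorem says $\mathcal{V}$ is Fregean, so congruences are determined by their $1$-blocks, and every principal congruence satisfies $\theta(a,b)=\theta(1,s(a,b))\vee\theta(1,s(b,a))$.

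\textbf{Step 1 (common to both parts).} First I establish the standard subtractive fact that congruences permute at $1$. For $\alpha,\beta\in\Con{\alg{A}}$,
$$1/(\alpha\vee\beta)=\{x : x\mathrel{\alpha} y\mathrel{\beta} 1 \text{ for some } y\}.$$
I would prove that the right-hand side is closed under the $1$-block closure by the usual Ursini-style argument with $s$, using $s(x,x)=1$ and $s(1,x)=x$. By $1$-regularity, $(a,b)\in\gamma$ holds if and only if $s(a,b),s(b,a)\in 1/\gamma$.

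\textbf{Step 2, part (1).} Assume $s(x,1)=x$. I aim to show $\alpha\circ\beta\subseteq\beta\circ\alpha$ directly. Take $a\mathrel{\alpha}c\mathrel{\beta}b$. The natural witness is $d\deq s(a,s(c,b))$, or a variant with the roles of $a$ and $b$ swapped.
\begin{itemize}
\item Modulo $\beta$ we have $s(c,b)\equiv_\beta 1$, so $d\equiv_\beta s(a,1)=a$.
\item Modulo $\alpha$ we have $d\equiv_\alpha s(c,s(c,b))$. What remains is to show $s(c,s(c,b))\equiv b$ modulo $\alpha$.
\end{itemize}
This last claim need not hold as an identity, so I would instead verify it through filters, using Fregeanity. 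It suffices to show that $s(d,b)$ and $s(b,d)$ lie in $1/\alpha$, equivalently that they are $\le$-above some element of $1/\alpha$ in the congruence order. For this I would use Step 1 together with orderability: $x\le y$ means $\theta(y,1)\subseteq\theta(x,1)$.

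If this witness fails, the fallback is the following. Show that $1/(\alpha\vee\beta)=1/(\alpha\circ\beta)$ forces $\alpha\vee\beta=\alpha\circ\beta$, by applying Step 1 to $s(a,b)$ and $s(b,a)$ and then reconstructing the middle element from the two witnesses. The identity $s(x,1)=x$ is exactly what allows $a$ to be recovered from $s(a,\cdot)$ once the second argument collapses to $1$.

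\textbf{Step 3, part (2).} Assume $s(x,1)=1$. Here $s$ behaves like an implication, since both $s(x,1)=1$ and $s(x,x)=1$ hold. I would show that the lattice of $1$-blocks is distributive, which suffices by $1$-regularity. Concretely, I aim for the inclusion
$$1/(\gamma\wedge(\alpha\vee\beta))\subseteq 1/((\gamma\wedge\alpha)\vee(\gamma\wedge\beta)).$$
Take $x\in1/\gamma$ with $x\mathrel{\alpha}y\mathrel{\beta}1$, as provided by Step 1. I would use a term such as $s(s(x,y),x)$ or $s(y,x)$, which lies in $1/\gamma$ because $x\equiv_\gamma 1$ and $s(\cdot,1)=1$, to produce the needed elements of $1/(\gamma\wedge\alpha)$ and $1/(\gamma\wedge\beta)$.

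\textbf{Main obstacle.} I expect the hardest step to be finding the right term witnesses in Steps 2 and 3 using only the two identities and orderability. In particular, showing that $s(c,s(c,b))$ and $b$ generate the same $1$-blocks will likely need the Fregean principal-congruence formula in a nontrivial way.
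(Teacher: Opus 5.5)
Part (1) has a genuine gap. A witness $t(a,c,b)$ that works uniformly for all congruences $\alpha,\beta$ is exactly a Mal'cev term, i.e.\ it must satisfy $t(x,y,y)=x$ and $t(x,x,z)=z$. Your $t(x,y,z)=s(x,s(y,z))$ is not one. Take $\alpha$ to be the identity congruence and $a=c$; then your requirement $d\equiv_\alpha b$ becomes the identity $s(c,s(c,b))=b$, and this identity fails under the hypotheses of (1). For instance, equivalential algebras with $s(x,y)=x\leftrightarrow y$ form a Fregean variety satisfying $s(x,x)=1$ and $s(1,x)=x=s(x,1)$. Yet in the reduct of the three-element G\"odel chain $0<h<1$ we have $0\leftrightarrow(0\leftrightarrow h)=0\leftrightarrow 0=1\neq h$. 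The swapped variant fails symmetrically (take $\beta$ trivial).

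No filter argument can repair this. In an orderable algebra, two elements with the same principal congruence to $1$ are equal, so ``generating the same $1$-blocks'' modulo $\alpha$ is just $\alpha$-equivalence. Your fallback, ``reconstructing the middle element from the two witnesses,'' is precisely where a Mal'cev term must be produced, and you give no construction. Step~1 together with $1$-regularity cannot be enough on its own. Hilbert algebras are Fregean and subtractive, hence permute at $1$, yet they are not CP: in the pure Hilbert algebra on a two-element antichain $\{a,b\}$, the congruences $\theta(1,a)$ and $\theta(1,b)$ do not permute. So the identity $s(x,1)=x$ has to be exploited in an essential, specific way, and that idea is missing. The paper does not argue this directly either. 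It records $s(x,1)=x=s(1,x)$ and $s(x,x)=1$ and invokes Theorem~3.8 of~\cite{ISW09}, which characterises CP Fregean varieties by such a term. You need that result, or an explicit Mal'cev term, to close part (1).

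Part (2), by contrast, goes through, by a route different from the paper's. Your candidate $s(s(x,y),x)$ is of no use, since it is both $\alpha$- and $\beta$-related to $x$. But $z\deq s(y,x)$ works:
\begin{itemize}
\item $x\mathrel{\alpha}y$ gives $z\mathrel{\alpha}s(y,y)=1$;
\item $x\mathrel{\gamma}1$ gives $z\mathrel{\gamma}s(y,1)=1$;
\item $y\mathrel{\beta}1$ gives $z\mathrel{\beta}s(1,x)=x$;
\item $x,z\in 1/\gamma$.
\end{itemize}
Hence $x\mathrel{(\gamma\wedge\beta)}z\mathrel{(\gamma\wedge\alpha)}1$. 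In a $1$-regular algebra, $1/\varphi\subseteq 1/\psi$ forces $\varphi=\varphi\wedge\psi\subseteq\psi$, so distributivity follows from Step~1.

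The paper instead uses commutator theory. $1$-regularity gives CM, and the term condition applied to $s(a,1)=1=s(a,a)$ yields $[\theta(1,a),\theta(1,a)]=\theta(1,a)$, hence $[\alpha,\alpha]=\alpha$ for every $\alpha$. The identity (C1), valid in Fregean varieties by~\cite{ISW09}, then turns this into $[\varphi,\psi]=\varphi\cap\psi$, which is CD. Your argument is more elementary and self-contained: it uses orderability only to obtain $1$-regularity, plus the three identities. The paper's argument instead places the result within commutator theory, at the cost of citing external facts.
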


\begin{proof}
For (1), note that by the assumptions
$\mathcal{V}\models s(x,1) = x = s(1,x)$ and $\mathcal{V}\models s(x,x) = 1$.
Then, by Theorem~3.8 of~\cite{ISW09}, $\mathcal{V}$ is CP.

For (2) we need to work a little harder. Since $\mathcal{V}$ is 1-regular, it is
congruence modular (CM), the standard commutator theory of~\cite{COMM} 
applies. Recall that
the commutator $\comm{\alpha}{\beta}$ of congruences $\alpha$ and $\beta$ is the
smallest congruence such that for any $k$-ary term $t$ (with $k\geq 2$),
any $(a,b)\in \alpha$ and any $c_1,\dots,c_{k-1}, d_1,\dots,d_{k-1}$ such that
$(c_i,d_i)\in\beta$ for all $i\in\{1,\dots,k-1\}$ we have
$$
t(a,\overline{c})\comm{\alpha}{\beta}t(a,\overline{d})
\iff
t(b,\overline{c})\comm{\alpha}{\beta}t(b,\overline{d}).
$$
We claim that
$$
\comm{\theta(1,a)}{\theta(1,a)} = \theta(1,a)
$$
holds for any $\alg{A}\in\mathcal{V}$ and any $a\in A$.
The left-to-right inclusion is clear, since $\comm{\alpha}{\beta}\leq
\alpha\cap\beta$. To see the other inclusion,
note that by the assumptions we have $s(a,1) = s(a,a)$ and so
$s(a,1)\comm{\theta(1,a)}{\theta(1,a)} s(a,a)$. By definition
of commutator and the assumptions, then
$1 = s(1,1)\comm{\theta(1,a)}{\theta(1,a)} s(1,a) = a$.

Now, take an arbitrary $\alpha\in \alg{Con}(\alg{A})$. By regularity of 
$\mathcal{V}$ we get that $\alpha = \bigvee_{a\in 1/\alpha}\theta(1,a)$. 
Using monotonicity of commutator, for any $a\in 1/\alpha$ we obtain
$$
\theta(1,a) = \comm{\theta(1,a)}{\theta(1,a)} \subseteq \comm{\alpha}{\alpha}.
$$
Hence $\alpha = \comm{\alpha}{\alpha}$. By Corollary~2.5 of~\cite{ISW09},
Fregean varieties satisfy the commutator identity C1 of~\cite{COMM}, that is,
$\comm{\varphi}{\psi} =
(\varphi\cap\comm{\psi}{\psi})\vee(\psi\cap\comm{\varphi}{\varphi})$.
It follows that $\comm{\varphi}{\psi} = \varphi\cap\psi$ for any
congruences $\varphi$, $\psi$, and this for CM varieties is equivalent to
CD.
\end{proof}  

The apparent symmetry between condition (1) and (2) of
Theorem~\ref{thm:subtr-order-CD-CP} is somewhat misleading. All pointed CP varieties
are subtractive, via $s(x,y)\deq p(y,x,1)$ where $p$ is the Mal'cev term,
and, if they are also orderable, then by Theorem~3.8 of~\cite{ISW09} they have some
subtractive term satisfying (1).  On the other hand, there are
orderable CD varieties which are not subtractive, for example upper-bounded
distributive lattices. However, if an orderable CD variety with a subtractive
term $s(x,y)$ has a majority term $M(x,y,z)$, then it has a subtractive term
$s'(x,y) = M(s(x,y),y,1)$ satisfying (2). 

The next theorem lays foundations for a different proof of Theorem~3.4
of~\cite{Agl01}, which is our Corollary~\ref{cor:agliano} below.   

\begin{theorem}\label{thm:oprema}
Let $\mathcal{V}$ be an orderable variety, and let $s$ be a binary term in its
language. The following are equivalent.
\begin{enumerate}
\item $a\leq b \iff s(a,b) = 1$ holds for all $\alg{A}\in\mathcal{V}$ and all
  $a,b\in A$, with $\leq$ being the natural congruence ordering.
\item $s$ is a subtractive term for $\mathcal{V}$, and
for every $\alg{A}\in\mathcal{V}_{SI}$ and all $a,b\in A$ we have  
$s(a,b) = \op \implies a =1$. 
\item $(A; s, 1)$ is a Hilbert algebra and
$\mathbf{Con}(A; s,1) = \Con{\alg{A}}$, 
for any $\alg{A}\in\mathcal{V}$. 
\end{enumerate}  
\end{theorem}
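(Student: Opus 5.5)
The plan is to prove (1)$\Ra$(2)$\Ra$(1), then (1)$\Ra$(3)$\Ra$(1). Throughout I use orderability in the form
$$
a\leq b \iff \theta(b,1)\subseteq\theta(a,1),
$$
and the fact that congruences are compatible with $s$. I also use that if $\mu\in\Cm{\alg{A}}$ is chosen maximal with $c\notin 1/\mu$, then in $\alg{A}/\mu$ the element $c/\mu$ is the coatom $\op$. This holds by Lemma~\ref{lem:1-orderable-si}, since $1/\mu^+$ collapses exactly $\{\op,1\}$.

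\emph{(1)$\Ra$(2).} Since $a\le a$, (1) gives $s(a,a)=1$.

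For $s(1,b)=b$, first compare $\theta(b,1)$ and $\theta(s(1,b),1)$. Modulo $\theta(b,1)$ we have $s(1,b)\equiv s(1,1)=1$, so $\theta(s(1,b),1)\subseteq\theta(b,1)$. Conversely, in $\alg{A}/\theta(s(1,b),1)$ we have $s(1,b)=1$, so (1) applied in that quotient gives $1\le b$, i.e.\ $b=1$ there. Hence $\theta(b,1)\subseteq\theta(s(1,b),1)$. The two principal congruences are equal, and antisymmetry of the congruence order gives $s(1,b)=b$.

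For the condition on subdirectly irreducibles, let $\alg{A}$ be subdirectly irreducible with monolith $\mu$, and suppose $s(a,b)=\op$ and $a\neq1$. Then $s(a,b)\ne1$, so $a\not\le b$ by (1). In $\alg{A}/\mu$ we have $s(a,b)=1$, so $a\le b$ there, i.e.\ $\theta(b,1)\vee\mu\subseteq\theta(a,1)\vee\mu$. Since $a\ne1$, $\theta(a,1)$ is nontrivial and so contains $\mu$. Therefore $\theta(b,1)\subseteq\theta(a,1)$, that is $a\le b$, a contradiction.

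\emph{(2)$\Ra$(1).} Suppose $s(a,b)=1$. Passing to $\alg{A}/\theta(a,1)$ gives $1=s(1,b)=b$ there, so $\theta(b,1)\subseteq\theta(a,1)$, i.e.\ $a\le b$.

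Conversely, suppose $a\le b$ but $s(a,b)\ne1$. Choose $\nu\in\Cm{\alg{A}}$ maximal with $s(a,b)\notin 1/\nu$. Then $s(a,b)/\nu=\op$ in the subdirectly irreducible algebra $\alg{A}/\nu$, so $a/\nu=1$ by (2). Since $a\le b$ is preserved in quotients, $b/\nu=1$ as well. This forces $s(a,b)/\nu=s(1,1)=1$, a contradiction.

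\emph{(1)$\Ra$(3).} I would use the map $M$ of Theorem~\ref{thm:Heyting-from-cm} and show
$$
M(s(a,b))=M(a)\ra M(b)=\dw{(M(a)\smallsetminus M(b))}^\complement .
$$
Then $(A;s,1)$ embeds into the $\{\ra,1\}$-reduct of a Heyting algebra, so it is a Hilbert algebra.

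For $\subseteq$: if $(s(a,b),1)\in\mu$ and $\nu\supseteq\mu$, then $s(a,b)=1$ in $\alg{A}/\nu$. By (1), $a\le b$ there, so $a/\nu=1$ implies $b/\nu=1$.

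For $\supseteq$: if $s(a,b)\notin1/\mu$, pick $\nu\supseteq\mu$ maximal in $\cm{\alg{A}}$ with $s(a,b)\notin 1/\nu$. Then $s(a,b)/\nu=\op$, so $a\in1/\nu$ by (2), which we already know is equivalent to (1). Also $b\notin 1/\nu$, since otherwise $s(a,b)/\nu=1$. Thus $\nu\in M(a)\smallsetminus M(b)$ lies above $\mu$.

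By (1) the Hilbert order of $(A;s,1)$ coincides with the congruence order of $\alg{A}$. For the congruences, every $\alg{A}$-congruence is an $s$-congruence, and both lattices are determined by $1$-blocks: $\alg{A}$ is Fregean by Theorem~\ref{thm:1-orderable-subtr}, and Hilbert algebras are $1$-regular. So it remains to show two things. First, every $\alg{A}$-block $1/\alpha$ is a Hilbert filter; this follows from compatibility and $s(1,b)=b$. Second, for every Hilbert filter $F$, the $\alg{A}$-congruence $\alpha=\bigvee_{f\in F}\theta(1,f)$ satisfies $1/\alpha=F$.

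This second point is the main obstacle. My first attempt: take $c\in1/\alpha$, so by compactness $\theta(1,c)\subseteq\theta(1,f_1)\vee\dots\vee\theta(1,f_k)$. I would show $s(f_1,s(f_2,\dots s(f_k,c)))=1$ by evaluating in every subdirectly irreducible quotient with the help of (2), and then conclude $c\in F$ by modus ponens. Failing that, I would argue via Theorem~\ref{thm:PrimeFilter}: separate $c\notin F$ by a completely meet-irreducible Hilbert congruence, and show that this congruence is already an $\alg{A}$-congruence using the description $\theta(a,b)=\theta(1,s(a,b))\vee\theta(1,s(b,a))$.

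\emph{(3)$\Ra$(1).} Since the congruences of $\alg{A}$ and $(A;s,1)$ coincide, their congruence orders coincide. In a Hilbert algebra the congruence order is the order $x\le y\iff x\ra y=1$, which here reads $s(a,b)=1$. This gives (1).
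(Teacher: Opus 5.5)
Your proof is correct and uses the paper's ingredients. These are the equality $\theta(1,b)=\theta(1,s(1,b))$ for subtractivity, the map $M$ into the Heyting algebra of upsets of $\Cm{\alg{A}}$, and passing to a subdirectly irreducible quotient $\alg{A}/\nu$ in which a term equals $\op_\nu$ and then applying (2). The organisation differs: the paper runs the cycle (1)$\Ra$(2)$\Ra$(3)$\Ra$(1), so it never proves (2)$\Ra$(1) on its own, while you close (1)$\Leftrightarrow$(2) directly and then use both in (1)$\Ra$(3).

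The more substantive difference is the inclusion $\mathrm{Con}(A;s,1)\subseteq\con{\alg{A}}$, which you flag as the main obstacle. Your first attempt does go through. Suppose $t=s(f_1,s(f_2,\dots,s(f_k,c)\dots))\neq 1$, and pick $\nu\in\cm{\alg{A}}$ with $t/\nu=\op_\nu$.
\begin{itemize}
\item Condition (2) gives $f_1/\nu=1/\nu$, so $s(f_2,\dots)/\nu=\op_\nu$ because $s(1,x)=x$.
\item Peeling off $f_2,\dots,f_k$ the same way gives $c/\nu=\op_\nu$, while every $(1,f_i)\in\nu$.
\item This contradicts $\theta(1,c)\subseteq\bigvee_i\theta(1,f_i)\subseteq\nu$, so $t=1$.
\end{itemize}
Modus ponens inside the filter $F$ then gives $c\in F$, and 1-regularity of the Hilbert algebra $(A;s,1)$ finishes the argument. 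The fallback via Theorem~\ref{thm:PrimeFilter} is therefore unnecessary.

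The paper reaches the same conclusion more briefly. It runs your argument only for $k=1$, obtaining $\theta^{(A;s,1)}(1,a)=\theta(1,a)$ for every $a$. It then writes any $\varphi\in\mathrm{Con}(A;s,1)$ as $\bigvee_{a\in 1/\varphi}\theta^{(A;s,1)}(1,a)=\bigvee_{a\in 1/\varphi}\theta(1,a)$, using that joins of congruences are the same in both signatures. This avoids compactness, iterated terms and filters altogether.
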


\begin{proof}
(1) $\Ra$ (2). We first prove that $s$ of (1) is a subtractive
term. Clearly, $s(x,x) = 1$ holds, by reflexivity of $\leq$. To prove that
$s(1,x) = x$ holds, pick any $\alg{A}\in\mathcal{V}$ and any $a\in A$.
By orderability, it suffices to show that $\theta(1,a) = \theta(1, s(1,a))$.
Note that for any $\alpha\in\Con{\alg{A}}$ we have that
$(1,a)\in\alpha$ implies $s(1,a)\mathbin{\alpha} s(1,1) = 1$, that is,
$(1, s(1,a))\in\alpha$. Now suppose $(1, s(1,a))\in\alpha$. Then,
$1/\alpha = s(1/\alpha,a/\alpha)$ and by (1) we obtain $1/\alpha\leq a/\alpha$
and so $1/\alpha = a/\alpha$. Hence $(1,a)\in\alpha$. We conclude that
$(1,a)\in\alpha$ if and only if $(1, s(1,a))\in\alpha$, for any
$\alpha\in\Con{\alg{A}}$, and therefore
$\theta(1,a) = \theta(1, s(1,a))$ as required.

For the second part of (2), let $\alg{A}\in\mathcal{V}_{SI}$ and let
$1/\mu = \{1,\op\}$, where $\mu$ is the monolith of $\alg{A}$.
To get a contradiction, suppose $s(a,b) = \op$ but $a\neq 1$. 
Then $(1,\op)\in \theta(1,a)$ and $(s(a,b), s(1,b))\in\theta(1,a)$,
so $(1, s(1,b))\in\theta(1,a)$ since $s(a,b) = \op$. But
$s(1,b) = b$ so $(1,b)\in\theta(1,a)$ and therefore $b\geq a$, which by (1)
gives $s(a,b) = 1$, contradicting the supposition.

(2) $\Ra$ (3). Take $\alg{A}\in\mathcal{V}$ and recall the
injective map $M\colon \alg{A}\to\mathrm{Up}(\Cm{\alg{A}})$,
given by $M(a) = \{\mu\in\Cm{\alg{A}}: (1,a)\in\mu\}$. Clearly,
$(\mathrm{Up}(\Cm{\alg{A}}); \ra, \cm{\alg{A}})$ 
with $U\ra W\deq \dw{(U\smallsetminus W)}^\complement$ is a Hilbert algebra.
We will show that $M(s(a,b)) = M(a)\ra M(b)$ for any $a,b\in A$, and hence
$(A; s, 1)$ is a Hilbert algebra as it embeds into one. 

Take $\mu\in M(s(a,b))$ and suppose $\mu\in \dw{(M(a)\smallsetminus M(b))}$.
Then, for some $\nu\geq\mu$ we have $\nu\in M(a)\smallsetminus M(b)$, that is,
$(1,a)\in \nu$ and $(1,b)\notin\nu$. But $(1, s(a,b))\in\nu$, so
we get $b = s(1,b) \mathbin{\nu} s(a,b) \mathbin{\nu} 1$, which yields
$(1,b)\in\nu$ giving a contradiction. Hence, $M(s(a,b))\subseteq M(a)\ra M(b)$.

For the other inclusion, suppose $\mu\in M(a)\ra M(b)$ but
$\mu\notin M(s(a,b))$. Then $(1,s(a,b))\notin\mu$ and we can find some $\nu\geq\mu$
such that $(1,s(a,b))\in\nu^+\smallsetminus\nu$. Hence, $s(a/\nu,b/\nu) = \op/\nu$, and by
(2) we obtain $a/\nu = 1/\nu$.  Moreover, $1/\nu \neq b/\nu = \op/\nu$ as $s$ is a
subtractive term. Hence, $(1,a)\in\nu$ and $(1,b)\notin\nu$, which implies
$\nu\in M(a)\smallsetminus M(b)$. It follows that
$\mu\in\dw{(M(a)\smallsetminus M(b))}$
contradicting the supposition $\mu\in M(a)\ra M(b)$.

It remains to show that $\mathbf{Con}(A; s,1) = \Con{\alg{A}}$. Clearly, every
congruence of $\alg{A}$ is a congruence of $(A; s,1)$ so we only need to prove
$\mathrm{Con}(A;s,1)\subseteq \con{\alg{A}}$.

We begin by showing that $\theta^{(A;s,1)}(1,a) = \theta(1,a)$
for every $a\in A$ and every $\alg{A}\in\mathcal{V}$. 
Obviously, it suffices to show $\theta^{(A;s,1)}(1,a)\geq \theta(1,a)$, so
take $a\in A$ and $(1,b)\in\theta(1,a)$. Seeking a contradiction, suppose
$(1,b)\notin\theta^{(A;s,1)}(1,a)$. Then $s(a,b)\neq 1$, since $(A;s,1)$ is a
Hilbert algebra. So there exists $\mu\in\cm{\alg{A}}$ such that
$(1,s(a,b))\in\mu^+\smallsetminus\mu$. Then, $s(a/\mu, b/\mu) = \op/\mu$ and
therefore, by (2), we get $a/\mu = 1/\mu$, that is $(1,a)\in\mu$. Hence
$(1,b)\in \theta(1,a)\subseteq\mu$ and so
$s(a/\mu,b/\mu) = s(1/\mu,1/\mu) = 1/\mu$ which is a contradiction.

Now, take an arbitrary $\varphi\in\mathrm{Con}(A;s,1)$. Using the equality
just proved for principal congruences, we obtain
$\varphi = \bigvee_{a\in 1/\varphi}\theta^{(A;s,1)}(1,a) =
\bigvee_{a\in 1/\varphi}\theta(1,a)$,  and so $\varphi\in\con{\alg{A}}$.
Hence, $\mathrm{Con}(A;s,1)\subseteq \con{\alg{A}}$ as required.

(3) $\Ra$ (1). Let $\alg{A}\in\mathcal{V}$ and $a,b\in A$. Then $a\leq b$ if and
only if $(1,b)\in\theta(1,a)$ if and only if $(1,b)\in \theta^{(A;s,1)}(1,a)$
if and only if $s(a,b) = 1$. The first equivalence follows by 
orderability of $\alg{A}$, the second and third follow by (3).
\end{proof}  

\begin{lemma}\label{lem:unique}
Let $\mathcal{V}$ be an orderable variety. If in the
language of $\mathcal{V}$ there is a binary term $s$
satisfying the equivalent conditions
of Theorem~\ref{thm:oprema}, then $s$ is unique.
\end{lemma}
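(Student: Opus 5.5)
Suppose $s$ and $t$ are two binary terms in the language of $\mathcal{V}$, both satisfying the equivalent conditions of Theorem~\ref{thm:oprema}. I would show $\mathcal{V}\models s(x,y)=t(x,y)$. Since $\mathcal{V}$ is orderable, the natural congruence ordering $\leq$ is a partial order on each $\alg{A}\in\mathcal{V}$. So for all $a,b\in A$ it suffices to prove $s(a,b)\leq t(a,b)$ and $t(a,b)\leq s(a,b)$; antisymmetry then gives equality.

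The tool is condition (3) of Theorem~\ref{thm:oprema}: both $(A;s,1)$ and $(A;t,1)$ are Hilbert algebras. Condition (1) says that for either term, $u(a,b)=1$ holds if and only if $a\leq b$, so both Hilbert algebra orders coincide with the congruence order $\leq$. I would then show that the Hilbert algebra implication is determined by its order in the following sense: $c\leq u(a,b)$ if and only if $u(c,u(a,b))=1$. By (H6) this equals $u(a,u(c,b))=1$, that is, $a\leq u(c,b)$.

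Here I hit a snag: this does not reduce to a condition on $\leq$ alone, because Hilbert algebras lack meets. Indeed, two Hilbert algebra structures on the same poset can differ; a pure Hilbert algebra structure on a Boolean lattice differs from the Boolean implication. So a purely order-theoretic argument fails, and I need the ambient congruences.

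The fix is to work inside $\mathrm{Up}(\Cm{\alg{A}})$ via the injective map $M$ of Theorem~\ref{thm:Heyting-from-cm}. The (2)$\Ra$(3) part of the proof of Theorem~\ref{thm:oprema} established, for any term $u$ satisfying (2),
$$M(u(a,b)) = M(a)\ra M(b) = \dw{(M(a)\smallsetminus M(b))}^\complement .$$
The right-hand side depends only on $a$, $b$ and $\Cm{\alg{A}}$, not on $u$. Hence $M(s(a,b))=M(t(a,b))$, and injectivity of $M$ (which uses orderability) gives $s(a,b)=t(a,b)$. Since $\alg{A}$ and $a,b$ were arbitrary, $\mathcal{V}\models s=t$.

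The only step needing care is that the identity $M(u(a,b))=M(a)\ra M(b)$ really was proved from (2) alone, for an arbitrary $u$. It used only subtractivity and the implication $u(a,b)=\op\Ra a=1$ in subdirectly irreducible quotients, both of which hold for $s$ and $t$ by the equivalence of conditions in Theorem~\ref{thm:oprema}. So the argument reduces to citing that computation for each of the two terms.
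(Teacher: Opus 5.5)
Your argument is correct, and it reaches the conclusion by a somewhat different route from the paper's.

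\textbf{The paper's route.} The paper argues directly. If $s(a,b)\neq r(a,b)$, it picks $\mu\in\cm{\alg{A}}$ with $(s(a,b),r(a,b))\in\mu^+\smallsetminus\mu$. In the subdirectly irreducible quotient $\alg{A}/\mu$ the two values must then be $1/\mu$ and $\op_\mu$, by Lemma~\ref{lem:1-orderable-si}. Condition (2) for the term taking the value $\op_\mu$ forces $a/\mu=1/\mu$ and $b/\mu=\op_\mu$. The other term then gives $s(1/\mu,\op_\mu)=1/\mu$, which contradicts subtractivity.

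\textbf{Your route.} You reuse the identity $M(u(a,b))=M(a)\ra M(b)$ from the proof of (2)$\Ra$(3), and you conclude by injectivity of $M$. That identity was indeed derived for an arbitrary term $u$ satisfying (2). It needs only subtractivity, Lemma~\ref{lem:1-orderable-si}, and the standard Zorn argument producing $\nu\supseteq\mu$ with $(1,u(a,b))\in\nu^+\smallsetminus\nu$. All of this is available in any orderable variety.

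\textbf{Comparison.} Your approach buys a stronger structural fact. On every $\alg{A}\in\mathcal{V}$, any such term is forced, via $M$, to realise the canonical implication $U\ra W=\dw{(U\smallsetminus W)}^\complement$ of $\mathrm{Up}(\Cm{\alg{A}})$, so uniqueness follows with no new work. The price is that you rely on a fact established inside a proof rather than stated as a result, and on the full two-inclusion computation. The paper's argument is self-contained and uses the same mechanism, localised to a single separating congruence and a single application of (2).

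Your side remark is also correct: a Hilbert algebra implication is not determined by its order alone, as the pure and Boolean implications on the four-element Boolean lattice show. This explains why an appeal to congruences is unavoidable, though the remark plays no role in your final proof.
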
  

\begin{proof}
Let $s$ and $r$ be binary terms such that
$x\leq y$ if and only if $s(x,y) = 1$ if and only if $r(x,y) = 1$
hold in $\mathcal{V}$. We will show that $\mathcal{V}\models s = r$. 
To get a contradiction, suppose for some $\alg{A}\in\mathcal{V}$ and some
$a,b\in A$ we have $s(a,b)\neq r(a,b)$. Then there exist some $\mu\in
\Cm{\alg{A}}$ such that $(s(a,b), r(a,b))\in\mu^+\smallsetminus\mu$. Then
$\alg{A}/\mu$ is subdirectly irreducible, and
$\{s(a/\mu,b/\mu), r(a/\mu,b/\mu)\} = \{1/\mu, \op^{\alg{A}/\mu}\}$. 
Without loss of generality, $s(a/\mu,b/\mu) = 1/\mu$ and
$r(a/\mu,b/\mu) = \op^{\alg{A}/\mu}$. Then, by Theorem~\ref{thm:oprema}(2)
applied to $r$ we get $a/\mu = 1/\mu$ and $b/\mu = \op^{\alg{A}/\mu}$.
But then $s(1/\mu,\op^{\alg{A}/\mu}) = 1/\mu$ contradicting the fact that
$s$ is a subtractive term on $\alg{A}/\mu$.
\end{proof}

\begin{cor}\label{cor:agliano}
The following are equivalent for a variety $\mathcal{V}$.
\begin{enumerate}  
\item $\mathcal{V}$ is orderable, subtractive and has EDPC.
\item $\mathcal{V}$ has a unique binary term $s$ such that
$(A;s,1)$ is a Hilbert algebra and
$\con{\alg{A}} = \mathrm{Con}(A;s,1)$,
for every $\alg{A}\in\mathcal{V}$.
\end{enumerate}
\end{cor}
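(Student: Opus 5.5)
The two directions are handled separately: (2)$\Ra$(1) by direct verification from the properties of $s$, and (1)$\Ra$(2) by producing a term satisfying condition (1) of Theorem~\ref{thm:oprema}, so that Theorem~\ref{thm:oprema} and Lemma~\ref{lem:unique} finish the job.

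For (2)$\Ra$(1), take $s$ as in (2). In $(A;s,1)$ the identities (H1) and (H2) give $s(x,x)=1$ and $s(1,x)=x$, so $s$ is a subtractive term. Hilbert algebras are orderable. Because $\con{\alg{A}}=\mathrm{Con}(A;s,1)$, principal congruences $\theta(1,a)$ are the same in both algebras, so $\alg{A}$ is orderable as well. For EDPC, recall that $\var{Hi}$ has EDPC, and its principal congruences are given by $(c,d)\in\theta(a,b)$ iff $(a\ra b)\ra((b\ra a)\ra(c\ra d))=1$ together with the symmetric identity in $c,d$. Since the congruence lattices coincide, $\theta^{\alg{A}}(a,b)=\theta^{(A;s,1)}(a,b)$. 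Substituting $s$ for $\ra$ therefore gives defining equations for principal congruences, uniformly across $\mathcal{V}$. Uniqueness of $s$ is exactly Lemma~\ref{lem:unique}, via the equivalence (3)$\Leftrightarrow$(1) of Theorem~\ref{thm:oprema}.

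For (1)$\Ra$(2), by Theorem~\ref{thm:oprema} and Lemma~\ref{lem:unique} it suffices to find a binary term $s$ with $a\le b\iff s(a,b)=1$ in every $\alg{A}\in\mathcal{V}$. By orderability, $a\le b$ means $(1,b)\in\theta(1,a)$. EDPC gives finitely many quaternary term pairs with
\[
(1,b)\in\theta(1,a)\iff \textstyle\bigwedge_i p_i(1,a,1,b)=q_i(1,a,1,b).
\]
Let $t$ be the subtractive term of $\mathcal{V}$. By Theorem~\ref{thm:1-orderable-subtr}, $\mathcal{V}$ is Fregean, hence 1-regular and orderable. So each equation $p=q$ is equivalent to $\theta(1,t(p,q))\vee\theta(1,t(q,p))$ being trivial, i.e.\ to $t(p,q)=1=t(q,p)$. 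Thus $a\le b$ is equivalent to finitely many equations $r_j(a,b)=1$ with $r_j$ binary. The task is then to fold these equations into a single one, $s(a,b)=1$.

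The folding is the main obstacle, since no meet is available to conjoin the conditions $r_j(a,b)=1$. I would try nesting with $t$, for instance $s(a,b)=t(t(r_1,1),\dots)$ or, better, an implication-like chain $r_1\ra(r_2\ra\cdots)$. This requires first extracting a relative implication, which I would obtain from EDPC: the condition $c\in 1/\theta(1,a)$ is expressible, and the EDPC terms yield a term $h(a,c)$ with $h(a,c)=1\iff c\ge a$. Should that route stall, an alternative is to argue in subdirectly irreducibles via Lemma~\ref{lem:1-fregean-si}: there, define $s$ by choosing suitable terms among the $r_j$ so that condition (2) of Theorem~\ref{thm:oprema} holds, namely $s(a,b)=\op\Ra a=1$. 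This is plausible because in SIs with monolith block $\{\op,1\}$ each EDPC term collapses to a two-valued test. Once such an $s$ is in hand, Theorem~\ref{thm:oprema}(3) gives that $(A;s,1)$ is a Hilbert algebra with the same congruences, and Lemma~\ref{lem:unique} gives uniqueness of $s$.
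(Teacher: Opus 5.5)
Your direction (2)$\Ra$(1) is fine. The paper calls it obvious, and transporting the EDPC formula of $\var{Hi}$ along $\con{\alg{A}}=\mathrm{Con}(A;s,1)$ is a correct way to spell it out. Your plan for (1)$\Ra$(2) is also the paper's: exhibit a term satisfying Theorem~\ref{thm:oprema}(1), then apply (1)$\Ra$(3) and Lemma~\ref{lem:unique}. But the step that carries all the content is never carried out, namely folding the finitely many conditions $r_j(a,b)=1$ into one equation $s(a,b)=1$.

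\begin{itemize}
\item Your first route is circular. The term $h(a,c)$ with $h(a,c)=1\iff c\geq a$ that you propose to ``obtain from EDPC'' is exactly the $s$ you are after. The chain $r_1\ra(r_2\ra\cdots)$ presupposes it, and even with an implication in hand, $r_1\ra r_2=1$ does not force $r_1=r_2=1$.
\item The nesting with $t$ fails outright: in $\var{Hi}$ with $t=\ra$ one has $t(r_1,1)=1$.
\item Your second route picks terms ``among the $r_j$'' separately in each subdirectly irreducible algebra. That gives neither a single term uniform over $\mathcal{V}_{SI}$ nor a subtractive one. Saying that the EDPC terms collapse to a two-valued test there only restates that an equation is either true or false.
\end{itemize}

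Moreover, no generic conjunction device can close the gap. Already in $\var{Hi}$, which satisfies (1), there is no binary term $u$ with $u(x,y)=1\iff x=y=1$. Every $\{\ra,1\}$-term has the form $u_1\ra(u_2\ra\cdots\ra(u_k\ra z)\cdots)$ with $z$ a variable or $1$, so it takes the value $1$ whenever $z$ does. Hence in $\alg{2}$ either $u(1,0)=1$, or $u(0,1)=1$, or $u$ is constantly $1$. So the folding must exploit the specific shape of principal congruences, and this is a genuine theorem rather than a routine manipulation.

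The paper obtains it by citing Aglian\`o and Ursini (Theorems~3.1 and~3.4 of~\cite{AU97a}). In a Fregean subtractive variety with EDPC there is a subtractive term $s$ such that $(1,b)\in\theta(1,a)\iff s(a,b)=1$ for all $\alg{A}\in\mathcal{V}$ and $a,b\in A$. That is precisely Theorem~\ref{thm:oprema}(1), after which the rest of your argument goes through. Without this ingredient, or a proof of it, your (1)$\Ra$(2) is incomplete.
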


\begin{proof}
Let $\mathcal{V}$ be an orderable, subtractive variety with EDPC.
By Theorem~\ref{thm:1-orderable-subtr} we get that $\mathcal{V}$ is Fregean.
By Theorems~3.1 and~3.4 of~\cite{AU97a}  
we get that for all $\alg{A}\in\mathcal{V}$ and all
$a,b\in A$ we have $(1,b)\in \theta(1,a)$ if and only if $s(a,b) = 1$
for a subtractive term $s$. This means Theorem~\ref{thm:oprema}(1)
is satisfied, and therefore so is Theorem~\ref{thm:oprema}(3). 
By Lemma~\ref{lem:unique} we obtain the uniqueness of $s$, and thus (2) follows
proving that (1) implies (2). The converse implication is obvious. 
\end{proof}

\section{A representation}

We begin by showing that pure Hilbert algebras form skeletons of all finite Hilbert
algebras in the sense that will become clear as we proceed.  
For any $\alg{A}\in\var{Hi}$, we 
define the set $\irr{\alg{A}}$ of \emph{irreducible elements}, putting 
$\irr{\alg{A}} = \{a\in A: \theta(1,a)\in \J{\Con{\alg{A}}}\}$
where $\J{\Con{\alg{A}}}$ is the poset of join-irreducible congruences of
$\Con{\alg{A}}$ ordered by inclusion. If $\alg{A}$ is finite,
each $\alpha\in \J{\Con{\alg{A}}}$ is completely join-irreducible, and
there exists a unique $a\in A$ such that $\alpha = \theta(1,a)$.
Hence, $\alpha$ has a unique subcover $\alpha_-$.
We will use this fact in proofs, without further notice. 

In finite Hilbert algebras and Brouwerian semilattices
the irreducible elements have an 
intrinsic characterisation given below. It was proved for Hilbert algebras,
by more complicated arguments, in~\cite{PS17} and in~\cite{CC05}, and for
Brouwerian semilattices in~\cite{Koh81}.

\begin{theorem}\label{thm:irreducible-charact}
Let $\alg{A}\in\var{Hi}$ be finite. Then for any $a\in A\smallsetminus\{1\}$,
the following are equivalent:
\begin{enumerate}
\item $a\in \irr{\alg{A}}$,  
\item $\theta(1,a)\in \J{\Con{\alg{A}}}$,
\item $\forall m\in A: m\ra a\in\{1,a\}$. 
\end{enumerate}
If $\alg{A}\in\var{Br}$ is finite, then the conditions above are
further equivalent to
\begin{enumerate}
\setcounter{enumi}{3}  
\item $a$ is meet-irreducible. 
\end{enumerate}  
\end{theorem}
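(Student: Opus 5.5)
The equivalence (1)$\Leftrightarrow$(2) is just the definition of $\irr{\alg{A}}$, so the work is in (2)$\Leftrightarrow$(3) and, for $\var{Br}$, (3)$\Leftrightarrow$(4). I will translate everything into filters, using the isomorphism between $\Con{\alg{A}}$ and the lattice of filters described before Theorem~\ref{thm:PrimeFilter}. Since $\var{Hi}$ is orderable with the congruence order equal to $\leq$, we have $(1,b)\in\theta(1,a)$ iff $a\leq b$. Hence $\theta(1,a)$ corresponds to the principal filter $\up{a}$, and $a\mapsto\theta(1,a)$ is an order-reversing injection. In particular, $\theta(1,b)\subsetneq\theta(1,a)$ iff $b>a$.

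For (3)$\Rightarrow$(2), assume $m\ra a\in\{1,a\}$ for all $m$, and put $G=\up{a}\smallsetminus\{a\}$. I claim $G$ is a filter. First, $1\in G$ because $a\neq 1$. Next, let $x\in G$ and $x\ra y\in G$. Then $y\geq a$, since the filter $\up{a}$ is closed under modus ponens. Suppose $y=a$. Then $x\ra a\neq 1$, because $x\not\leq a$, so by (3) we get $x\ra a=a\notin G$, which is a contradiction. Hence $G$ is a filter. Every filter properly contained in $\up{a}$ omits $a$, so it lies inside $G$. Thus $\theta(G)$ is the unique subcover of $\theta(1,a)$, and $\theta(1,a)$ is join-irreducible.

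For (2)$\Rightarrow$(3), argue by contraposition. Suppose $c\deq m\ra a\notin\{1,a\}$. By (H5), $a\leq c$, so $c>a$. Also $c\ra a=(m\ra a)\ra a\geq a$. If $c\ra a=a$, then $m\leq (m\ra a)\ra a=a$, so $m\ra a=1$, a contradiction; hence $c\ra a>a$. Now $\theta(1,c)$ and $\theta(1,c\ra a)$ are both strictly below $\theta(1,a)$. Their join has a filter containing $c$ and $c\ra a$, and therefore containing $a$. So $\theta(1,a)=\theta(1,c)\vee\theta(1,c\ra a)$, and $\theta(1,a)$ is not join-irreducible.

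For $\alg{A}\in\var{Br}$, the same elements handle (3)$\Rightarrow$(4) in contrapositive form. If (3) fails via $m$, then (B3) gives $c\wedge(c\ra a)=c\wedge a=a$, with both meetands strictly above $a$, so $a$ is meet-reducible. For (4)$\Rightarrow$(3), again by contraposition, suppose $a=x\wedge y$ with $x,y>a$. Then (B5) and (B2) give $x\ra a=x\ra y$. If $x\ra y=1$, then $x\leq y$ and $a=x$, a contradiction. If $x\ra y=a$, then $y\leq x\ra y=a$, again a contradiction. So $x\ra a\notin\{1,a\}$. The main point needing care is the (3)$\Rightarrow$(2) step: it depends on checking that $\up{a}\smallsetminus\{a\}$ really is closed under modus ponens, and the rest is routine.
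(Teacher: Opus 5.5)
Your proof is correct. For (3)$\Rightarrow$(2) it is essentially the paper's argument. Under (3), your filter $\up{a}\smallsetminus\{a\}$ coincides with the paper's $F_a=\{b\geq a: b\ra a=a\}$, and the subcover $\theta(F_a)$ of $\theta(1,a)$ is obtained in the same way. The only difference is how closure under modus ponens is checked: the paper uses an identity computation with (H3) and (H6) that is valid without assuming (3), while you use (3) directly.

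The real divergence is in (2)$\Rightarrow$(3). The paper starts from $\theta(1,a)\subseteq\theta(1,m\ra a)\vee\theta(1,m)$. It then needs congruence distributivity to intersect with $\theta(1,a)$ and push it under one joinand, because $m$ need not lie above $a$, so $\theta(1,m)$ need not lie below $\theta(1,a)$. You instead pass to the pair $c=m\ra a$ and $c\ra a$, both strictly above $a$. Then $\theta(1,a)=\theta(1,c)\vee\theta(1,c\ra a)$ is directly a join of two strictly smaller congruences. The only extra work is the check $m\leq (m\ra a)\ra a$, and in return CD is never used, only the filter--congruence correspondence.

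The same pair also gives $a=c\wedge(c\ra a)$ in the Brouwerian case. So a single witness refutes both join-irreducibility of $\theta(1,a)$ and meet-irreducibility of $a$. Your (4)$\Rightarrow$(3) computation $x\ra a=x\ra y$ via (B5) also spells out a step that the paper's one-line remark about $b\wedge(b\ra a)=a$ leaves implicit.
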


\begin{proof}
The equivalence of (1) and (2) is definitional. Now let
$\theta(1,a)\in\J{\Con{\alg{A}}}$ and take any $m\in A$. Then,
$\theta(1,a)\subseteq \theta(1,m\ra a)\vee\theta(1,m)$. By congruence
distributivity, $\theta(1,a) =
\bigl(\theta(1,m\ra a)\cap\theta(1,a))\vee(\theta(1,m)\cap\theta(1,a)\bigr)$.
Since $\theta(1,a)\in\J{\Con{\alg{A}}}$, we get
$\theta(1,a)\subseteq \theta(1,m\ra a)$ or
$\theta(1,a)\subseteq \theta(1,m)$. Hence $m\ra a\leq a$ or
$m\leq a$, and so $m\ra a = a$ or $m\ra a = 1$, proving that (2) implies (3).
For the converse, first we show that the set $F_a = \{b\geq a: b\ra a = a\}$ is
a filter. 
Take $c,d\in A$ with $c, c\ra d\in F_a$. Then
$c\ra a = a = (c\ra d)\ra a$, so we get
$d\ra a = d\ra (c\ra a) = c\ra (d\ra a) = (c\ra d)\ra(c\ra a) =
(c\ra d)\ra a = a$. As $a\leq c$ and $a\leq c\ra d$, we get 
$a\leq d$, so $d\in F_a$ as needed.

Now, since $a\notin F_a$ we have 
$F_a\subsetneq 1/\theta(1,a)$. Take $\alpha\in \Con{\alg{A}}$ such that
$\alpha\subsetneq \theta(1,a)$. Then $m>a$ for every $m\in 1/\alpha$,
so $m\ra a = a$ by (3), and hence,
$1/\alpha\subseteq F_a$. As $\alpha$ was arbitrary, we obtain
$\bigvee\{\alpha\in\Con{\alg{A}}: \alpha\subsetneq \theta(1,a)\}
\subseteq \theta(F_a) \subsetneq \theta(1,a)$ showing that
$\theta(1,a)\in \J{\Con{\alg{A}}}$; indeed, $\theta(F_a) = \theta(1,a)_-$.

The equivalence of (3) and (4) in Brouwerian semilattices follows from
the fact that $a< b$ and $a < b\ra a < 1$ hold if and only if
$b\wedge(b\ra a) = a$.
\end{proof}

Now, for any finite $\alg{A}\in\var{Hi}$ we define the algebra
$\ov{\Irr{\alg{A}}} = (\ov{\irr{\alg{A}}}; \ra, 1)$. Clearly,
$\ov{\Irr{\alg{A}}}$ is a pure Hilbert algebra. 

\begin{theorem}\label{thm:repr-fin-Hi}
Let $\alg{A}\in\var{Hi}$ be finite. Then $\ov{\Irr{\alg{A}}}\leq \alg{A}$ and
$\Con{\ov{\Irr{\alg{A}}}} \cong \Con{\alg{A}}$. 
\end{theorem}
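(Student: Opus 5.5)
We prove the two claims separately: first that $\ov{\irr{\alg{A}}}$ is a subuniverse on which $\ra$ is the pure operation, then that restriction of congruences is a lattice isomorphism.

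\emph{Step 1: $\ov{\Irr{\alg{A}}}$ is a subalgebra.} Take $a,b\in\irr{\alg{A}}\cup\{1\}$. If $a=1$, then $a\ra b=b$. If $b=1$, or more generally $a\leq b$, then $a\ra b=1$. Otherwise $a\not\leq b$ with $b\in\irr{\alg{A}}$. By Theorem~\ref{thm:irreducible-charact}(3), $a\ra b\in\{1,b\}$, and $a\ra b=1$ is excluded, so $a\ra b=b$. Thus the restriction of $\ra^{\alg{A}}$ to $\ov{\irr{\alg{A}}}$ is exactly the pure operation. So $\ov{\Irr{\alg{A}}}$ is a subuniverse of $\alg{A}$, and as an algebra it is the pure Hilbert algebra on the poset $\irr{\alg{A}}$.

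\emph{Step 2: the congruence isomorphism.} Since $\var{Hi}$ is 1-regular, I work with filters. Let $B=\ov{\irr{\alg{A}}}$ and consider the restriction map $\rho\colon F\mapsto F\cap B$ from filters of $\alg{A}$ to filters of $B$. This map is clearly well defined and order preserving.

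To show $\rho$ is injective, I use finiteness together with the fact that $\Con{\alg{A}}$ is distributive (EDPC). In a finite distributive lattice every element is the join of the join-irreducibles below it. So every congruence $\alpha$ is $\bigvee\{\theta(1,a): a\in\irr{\alg{A}},\ \theta(1,a)\subseteq\alpha\}$. In filter terms, $F$ is the filter generated by $F\cap\irr{\alg{A}}$, so $F$ is determined by $F\cap B$.

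For surjectivity, let $G$ be a filter of $B$ and let $F$ be the filter of $\alg{A}$ generated by $G$. I must show $F\cap B=G$, i.e.\ that if $c\in\irr{\alg{A}}$ lies in the $\alg{A}$-filter generated by $G$, then $c\in G$. Since $\alg{A}$ is finite, $F=1/\bigvee_{g\in G}\theta(1,g)$. Because $c$ is irreducible, $\theta(1,c)$ is join-irreducible in the distributive lattice $\Con{\alg{A}}$, hence join-prime. It follows that $\theta(1,c)\subseteq\theta(1,g)$ for some $g\in G$, i.e.\ $g\leq c$. Finally, $B$ is upward closed within $\ov{\irr{\alg{A}}}$ for its filter structure: $g\ra c=1\in G$ and $g\in G$ give $c\in G$ by modus ponens in $B$. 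Hence $c\in G$, and $\rho$ is a bijection.

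Since both $\rho$ and its inverse (generated filter) are monotone, $\rho$ is an order isomorphism, hence a lattice isomorphism $\Con{\ov{\Irr{\alg{A}}}}\cong\Con{\alg{A}}$.

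The main obstacle is the surjectivity step: showing that the generated filter adds no new irreducible elements. This rests on join-primeness of $\theta(1,c)$, which needs the distributivity and finiteness of $\Con{\alg{A}}$ together with the equality $F=1/\bigvee_{g\in G}\theta(1,g)$. If that route gave trouble, I would instead use the explicit description of the generated filter as $\{d: g_1\ra(\cdots\ra(g_k\ra d))=1\}$. I would then apply Theorem~\ref{thm:irreducible-charact}(3) repeatedly: if $d=c$ is irreducible and no $g_i\leq c$, each step $g_i\ra c$ collapses to $c$, so the whole expression equals $c\neq1$, a contradiction.
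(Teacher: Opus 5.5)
Your proof is correct. Step~1 coincides with the paper's argument, but Step~2 takes a different route.

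\textbf{How the paper argues.} It works with join-irreducibles. Since $\ov{\Irr{\alg{A}}}$ is pure, every $a\in\irr{\alg{A}}$ remains irreducible there. CEP identifies $\theta^{\ov{\Irr{\alg{A}}}}(1,a)$ with the restriction of $\theta^{\alg{A}}(1,a)$, so $\J{\Con{\ov{\Irr{\alg{A}}}}}\cong\J{\Con{\alg{A}}}$. Birkhoff's representation of finite distributive lattices then yields the isomorphism of the congruence lattices.

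\textbf{How you argue.} You exhibit the isomorphism directly as restriction of filters, with generation as its inverse. This avoids the representation theorem and tells you concretely what the isomorphism is.

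\textbf{Where the content of your argument lies.} Injectivity is the real point. It says every filter of $\alg{A}$ is generated by its irreducible elements, and it needs only finiteness plus the fact that join-irreducible congruences are of the form $\theta(1,a)$ with $a\in\irr{\alg{A}}$. Surjectivity, by contrast, is just the congruence extension property, and it holds for \emph{any} subalgebra of a Hilbert algebra. If $g_1\ra(\cdots\ra(g_k\ra c)\cdots)=1$ with $g_1,\dots,g_k\in G$ and $c\in B$, then all intermediate elements lie in $B$. So $k$ applications of modus ponens inside $G$ give $c\in G$. Neither join-primeness nor the irreducibility of $c$ is needed there; this is exactly the CEP that the paper quotes from EDPC.

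A small wording slip: in the surjectivity step it is $G$, not $B$, that is upward closed.
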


\begin{proof}
By Theorem~\ref{thm:irreducible-charact} $\ov{\Irr{\alg{A}}}$ is a 
subalgebra of $\alg{A}$. Next, if $a\in\irr{\alg{A}}$, then by 
Theorem~\ref{thm:irreducible-charact} again, 
$\theta^{\ov{\Irr{\alg{A}}}}(1,a) \in \J{\Con{\ov{\Irr{\alg{A}}}}}$. By EDPC
(in fact CEP suffices) we get $\theta^{\ov{\Irr{\alg{A}}}}(1,a) =
\theta^{\alg{A}}(1,a)\cap (\ov{\Irr{\alg{A}}}\times\ov{\Irr{\alg{A}}})$  
showing that the posets $(\J{\Con{\ov{\Irr{\alg{A}}}}}; \subseteq)$ and
$(\J{\Con{\alg{A}}}; \subseteq)$ are isomorphic,
which implies $\Con{\ov{\Irr{\alg{A}}}} \cong
\Con{\alg{A}}$.
\end{proof}

Now we generalise the construction of pure Hilbert algebras from posets
to obtain a representation of all finite Hilbert algebras. It can be extended to
all Hilbert algebras, but at the cost of introducing a topology.

\begin{defn}\label{def:ant-order}
Let $P$ be any poset, and let $\mathrm{Ac}(P)$ be the set of all
antichains in $P$. Define:
\begin{itemize}
\item a binary relation $\preccurlyeq$ on $\mathrm{Ac}(P)$ putting
$C\preccurlyeq B$ if $B\subseteq \dw{C}$,
\item a binary operation $\ra$ on $\mathrm{Ac}(P)$ putting 
$C\ra B = \{b \in B: \up{b}\cap C = \emptyset\}$,
\item a binary operation $\wedge$ on $\mathrm{Ac}(P)$ putting 
  $B\wedge C = \max(B\cup C)$.
\end{itemize}  
\end{defn}

The relation $\preccurlyeq$ is clearly an order, in fact a semilattice order with
$\wedge$ being the meet operation. If $B =\{b\}$ and $C=\{c\}$, then
$C\preccurlyeq B$ if and only if $b\leq c$, so $\preccurlyeq$ extends the dual of
the native $\leq$ on $P$ to $\mathrm{Ac}(P)$.
Furthermore, $\ra$ is a Hilbert algebra operation, and for
$B =\{b\}$ and $C=\{c\}$ we have
$$
C\ra B =\begin{cases}
  \emptyset & \text{ if }b\leq c,\\ 
  B & \text{ otherwise.}
\end{cases}
$$

\begin{defn}\label{def:antichain-alg}
For any poset $P$ we define algebras
\begin{itemize}  
\item $\alg{Ac}(P) = (\mathrm{Ac}(P); \ra,\emptyset)$,
\item $\alg{Acm}(P) = (\mathrm{Ac}(P); \wedge,\ra,\emptyset)$,
\end{itemize}
\end{defn}

\begin{lemma}\label{lem:antichain-alg}
Let $P$ be any poset. Then,
\begin{enumerate}
\item The algebra $\alg{Ac}(P)$ is a Hilbert algebra.
\item The algebra $\alg{Acm}(P)$ is a Brouwerian semilattice. 
\end{enumerate}
Moreover, the map $\varphi(A) = P\smallsetminus \dw{A}$ for any
$A\in\mathrm{Ac}(P)$, is an embedding
of ${(\mathrm{Ac}(P), \preccurlyeq)}$ into $(\mathrm{Up}(P); \subseteq)$.
If $P$ is finite, then $\varphi$ is an isomorphism.
\end{lemma}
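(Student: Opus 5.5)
The plan is to get everything from the map $\varphi$. I will show that $\varphi$ is an order embedding which converts $\ra$, $\wedge$ and $\emptyset$ into the Heyting operations on $\mathrm{Up}(P)$. Then (1) and (2) follow at once, because $\var{Hi}$ and $\var{Br}$ are varieties and hence closed under subalgebras. The ambient algebra is $(\mathrm{Up}(P);\cup,\cap,\ra,\emptyset,P)$, which is a Heyting algebra for any poset $P$. Implication there is $U\ra W=\dw{(U\smallsetminus W)}^\complement$, as in Theorem~\ref{thm:Heyting-from-cm}; the upsets form a complete lattice of sets closed under arbitrary unions and intersections. Its $\{\ra,1\}$-reduct is therefore a Hilbert algebra and its $\{\wedge,\ra,1\}$-reduct a Brouwerian semilattice.

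\emph{Order embedding.} Clearly $\varphi(A)=P\smallsetminus\dw{A}$ is an upset. We have $C\preccurlyeq B$ iff $B\subseteq\dw{C}$ iff $\dw{B}\subseteq\dw{C}$ iff $\varphi(C)\subseteq\varphi(B)$. So $\varphi$ is an order embedding of $(\mathrm{Ac}(P),\preccurlyeq)$ into $\mathrm{Up}(P)$ ordered by $\subseteq$; note that $\preccurlyeq$ corresponds to $\subseteq$ itself, not reverse inclusion. Injectivity follows because an antichain is recovered from its downset: $A=\max\dw{A}$. For finite $P$, take any upset $U$. Then $D=P\smallsetminus U$ is a finite downset, so $D=\dw{\max D}$, and $\max D$ is an antichain with $\varphi(\max D)=U$; hence $\varphi$ is surjective.

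\emph{Preservation of operations.} First, $\varphi(\emptyset)=P$, the top of $\mathrm{Up}(P)$.

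For meet, I claim $\dw{\max(B\cup C)}=\dw{B}\cup\dw{C}$. This holds because every element of $B\cup C$ lies below a maximal element of $B\cup C$: a strictly increasing chain inside a union of two antichains has length at most two, so no finiteness is needed. Hence $\varphi(B\wedge C)=\varphi(B)\cap\varphi(C)$.

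For implication, put $U=\varphi(C)$ and $W=\varphi(B)$, so that $U\smallsetminus W=\dw{B}\smallsetminus\dw{C}$. If $x\le b\in B$ and $x\notin\dw{C}$, then $b\notin\dw{C}$. Consequently
$$\dw{(\dw{B}\smallsetminus\dw{C})}=\dw{\{b\in B: b\notin\dw{C}\}}.$$
The condition $b\notin\dw{C}$ is exactly $\up{b}\cap C=\emptyset$, so the set on the right is $\dw{(C\ra B)}$. Taking complements gives $\varphi(C\ra B)=\varphi(C)\ra\varphi(B)$. In particular $\mathrm{Ac}(P)$ is closed under these operations as realised in $\mathrm{Up}(P)$, and $\varphi$ is a homomorphic embedding.

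\emph{Conclusion and main obstacle.} It follows that $\alg{Ac}(P)$ is isomorphic to a subalgebra of the $\{\ra,1\}$-reduct of $\mathrm{Up}(P)$, so it is a Hilbert algebra. Likewise $\alg{Acm}(P)$ is isomorphic to a subalgebra of the $\{\wedge,\ra,1\}$-reduct of $\mathrm{Up}(P)$, so it is a Brouwerian semilattice. The only delicate points are the two downset identities, one for $\max(B\cup C)$ and one for $\dw{B}\smallsetminus\dw{C}$, in possibly infinite $P$. I expect the implication identity to be the main step, but as sketched it reduces to the elementary observation that $x\le b$ and $x\notin\dw{C}$ force $b\notin\dw{C}$.
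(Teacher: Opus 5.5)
Your proof is correct, but it takes a different route from the paper's.

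\textbf{The paper's route.} The paper treats (1) and (2) as immediate from Definition~\ref{def:ant-order} and the remarks after it. Those remarks are asserted without proof: that $\preccurlyeq$ is a semilattice order with meet $\wedge$, and that $\ra$ is a Hilbert algebra operation. The paper then handles the `moreover' part separately, by observing that $\varphi^{-1}(U)$ is the set of maximal elements of $P\smallsetminus U$. That observation is exactly your surjectivity argument.

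\textbf{Your route.} You first prove that $\varphi$ is an embedding of $(\mathrm{Ac}(P);\wedge,\ra,\emptyset)$ into the $\{\wedge,\ra,1\}$-reduct of the Heyting algebra $\mathrm{Up}(P)$. You then get (1) and (2) from closure of $\var{Hi}$ and $\var{Br}$ under subalgebras.

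\textbf{What your route buys.}
\begin{itemize}
\item It proves the claims the paper takes for granted. You never check (H1)--(H4) or residuation by hand. Everything reduces to two identities: $\dw{\max(B\cup C)}=\dw{B}\cup\dw{C}$ and $\dw{(\dw{B}\smallsetminus\dw{C})}=\dw{(C\ra B)}$.
\item It works for infinite $P$. Your remark that a chain inside a union of two antichains has at most two elements is exactly what makes $\max(B\cup C)$ dominate $B\cup C$ there.
\item It gives an operation-preserving version of the `moreover' clause, not just an order embedding. This stronger form is what the paper actually relies on later, in the proof of Theorem~\ref{thm:mono}(3), where it combines this lemma with the upset representation of Brouwerian semilattices.
\end{itemize}
The paper's argument is shorter only because it leaves these verifications to the reader.
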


\begin{proof}
(1) and (2) follow immediately from Definition~\ref{def:ant-order}, the
remarks following it, and Definition~\ref{def:antichain-alg}. The `moreover' part
follows easily upon observing that for any $U\in\mathrm{Up}(P)$ the set
$\varphi^{-1}(U)$ is the antichain of all maximal elements of $P\smallsetminus U$. 
\end{proof}

Lemma~\ref{lem:antichain-alg} suggests that a generic construction of a Hilbert
algebra from a poset $P$ will use some selection of antichains of $P$, in
contrast to an analogous construction of a Brouwerian semilattice, which will
use all antichains. Indeed it is the case, as the theorem below shows.
In its proof, and later on, for any $\alg{A}\in\{\var{Hi},\var{Br}\}$
and any $\mu\in\cm{\alg{A}}$, we will write $\op_\mu$ for the 
unique coatom of 
$\alg{A}/\mu$. Clearly, $a/\mu = \op_\mu\iff \mu\vee\theta(a,1) = \mu^+$.
We represent $\alg{A}$ as an algebra of antichains of $\Cm{\alg{A}}$.

\begin{theorem}[Representation]\label{thm:mono}
Let $\alg{A}\in\{\var{Hi},\var{Br}\}$.  
Let $S\colon\alg{A}\to\mathrm{Ac}(\Cm{\alg{A}})$ be defined by
\begin{equation}\label{eq:S}\tag{\S}
S(a) = \{\mu\in\cm{\alg{A}}:\mu\vee\theta(1,a) = \mu^+\}.
\end{equation}
Then, $S$ is a monomorphism, and 
\begin{enumerate}
\item  if $\alg{A}\in\var{Hi}$, then 
  $\alg{A}\cong S(\alg{A}) \leq \alg{Ac}(\Cm{\alg{A}})$;
\item  if $\alg{A}\in\var{Br}$, then 
  $\alg{A}\cong S(\alg{A}) \leq \alg{Acm}(\Cm{\alg{A}})$;
\item  if $\alg{A}\in\var{Br}$ is finite, then 
  $\alg{A}\cong S(\alg{A}) \cong \alg{Acm}(\Cm{\alg{A}})$.  
\end{enumerate}
\end{theorem}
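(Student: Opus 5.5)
Throughout, write $M(a)=\{\mu\in\cm{\alg{A}}:(1,a)\in\mu\}$, which is an upset of $\Cm{\alg{A}}$, and $\varphi(C)=\cm{\alg{A}}\smallsetminus\dw{C}$ as in Lemma~\ref{lem:antichain-alg}. The plan is to reduce everything to Theorem~\ref{thm:Heyting-from-cm} and Theorem~\ref{thm:oprema} by showing
$$
\varphi(S(a)) = M(a)\quad\text{for every } a\in A.
$$
Once this holds, $S$ is injective because $M$ is. Moreover, $\varphi$ translates the antichain operations into upset operations, so the homomorphism property of $S$ becomes the homomorphism property of $M$.

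\textbf{First, $S(a)$ is an antichain and $\varphi(S(a))=M(a)$.} By Lemma~\ref{lem:1-orderable-si} and Theorem~\ref{thm:Hi-si}, we have $\mu\in S(a)$ if{f} $a/\mu=\op_\mu$. In that case $(1,a)\notin\mu$ but $(1,a)\in\mu^+$, and $\mu^+\subseteq\nu$ for every $\nu\supsetneq\mu$ in $\cm{\alg{A}}$. Hence every $\nu>\mu$ lies in $M(a)$, and so $S(a)\subseteq\max(\cm{\alg{A}}\smallsetminus M(a))$. Conversely, let $\mu$ be maximal with $(1,a)\notin\mu$. Then $a/\mu\neq 1/\mu$, so by Theorem~\ref{thm:PrimeFilter} applied in $\alg{A}/\mu$ there is $\nu\geq\mu$ with $\nu\in\cm{\alg{A}}$ and $(1,a)\in\nu^+\smallsetminus\nu$. (One takes $\nu$ from a chain of c.m.i.\ congruences above $\mu$ avoiding $a$, or picks $\nu$ maximal among those omitting $(1,a)$.) By maximality, $\nu=\mu$, so $\mu\in S(a)$. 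Thus $S(a)=\max(\cm{\alg{A}}\smallsetminus M(a))$, which is an antichain.

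The delicate point is that in the infinite case maximal elements need not exist. So the equality $\varphi(S(a))=M(a)$ requires every $\mu\notin M(a)$ to lie below some element of $S(a)$. This is exactly what the existence of $\nu\geq\mu$ with $(1,a)\in\nu^+\smallsetminus\nu$ provides. I would prove it by Zorn's lemma: take a maximal congruence $\nu\supseteq\mu$ among those omitting $(1,a)$. Such a $\nu$ is completely meet-irreducible with $\nu^+\ni(1,a)$. I expect this to be the main obstacle, and it is where the argument used in the proof of Theorem~\ref{thm:oprema} gets reused.

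\textbf{Next, the operations.} For antichains one checks directly that
$$
\varphi(C\ra B)=\dw{(\varphi(C)\smallsetminus\varphi(B))}^\complement,
$$
and, for $\var{Br}$, that $\varphi(B\wedge C)=\varphi(B)\cap\varphi(C)$. Both are routine unwindings of Definition~\ref{def:ant-order}. For instance, $b\in B$ with $\up{b}\cap C=\emptyset$ corresponds to points lying in the complement of $\varphi(B)$ whose upsets avoid $\dw{C}$.

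Since $\ra$ is the Hilbert algebra implication, Theorem~\ref{thm:oprema} (with $s=\ra$) gives $M(a\ra b)=M(a)\ra M(b)$. In $\var{Br}$ we also have $M(a\wedge b)=M(a)\cap M(b)$, because $\theta(1,a\wedge b)=\theta(1,a)\vee\theta(1,b)$. Transporting these along $\varphi$ gives $S(a\ra b)=S(a)\ra S(b)$ and $S(a\wedge b)=S(a)\wedge S(b)$, and also $S(1)=\emptyset$. This proves (1) and (2).

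\textbf{Finally, (3).} For finite $\alg{A}\in\var{Br}$, $\varphi$ is a bijection onto $\mathrm{Up}(\Cm{\alg{A}})$ by Lemma~\ref{lem:antichain-alg}. It then suffices to show $M$ is onto the upsets. Each $\mu\in\cm{\alg{A}}$ has an $a_\mu$ with $\up{\mu}=M(a_\mu)$: take $a_\mu$ with $1/\theta(1,a_\mu)$ equal to the filter $1/\mu$, which is principal by finiteness, so $M(a_\mu)=\{\nu:\mu\subseteq\nu\}$. Every upset is a finite union of principal upsets $\up{\mu}$. Therefore it equals $M$ of a meet of such elements, which would be a join if the correspondence were order-reversed. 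To settle this, note that $M(a\wedge b)$ is in fact the upset of congruences containing both $\theta(1,a)$ and $\theta(1,b)$, so intersections are realised by meets. For unions, I would instead use that every upset $U$ is the intersection of complements of principal downsets, each of the form $\dw{\mu}^\complement=M(\op\text{-element at }\mu)$, namely $M(a)$ for any $a$ with $S(a)=\{\mu\}$. Such an $a$ exists by taking $a$ with $M(a)=\cm{\alg{A}}\smallsetminus\dw{\mu}$, built as $a_\mu\ra c$ for suitable $c$, or via $\mathrm{Ac}$ directly. This last existence step is the finite-case subtlety I would check carefully.
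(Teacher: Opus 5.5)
Your argument for injectivity and for (1)--(2) is correct, and it takes a different route from the paper. The paper checks each inclusion of $S(a\ra b)=S(a)\ra S(b)$ and $S(a\wedge b)=S(a)\wedge S(b)$ elementwise. You instead prove $\varphi(S(a))=M(a)$, using the same Zorn step the paper uses: a congruence above $\mu$ that is maximal among those omitting $(1,a)$. You then observe that $\varphi$ is injective on antichains and turns $\ra$ and $\wedge$ of $\mathrm{Ac}$ into the upset operations; both identities hold for arbitrary posets, even infinite ones. Finally you transport $M(a\ra b)=M(a)\ra M(b)$ and $M(a\wedge b)=M(a)\cap M(b)$. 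This shows $S$ is the upset representation $M$ of Theorem~\ref{thm:Heyting-from-cm} written in antichain form. One caveat: $M(a\ra b)=M(a)\ra M(b)$ is proved inside the proof of Theorem~\ref{thm:oprema}, not stated in it.

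The gap is in (3). Your reduction to surjectivity of $M$ onto $\mathrm{Up}(\Cm{\alg{A}})$ is fine, and meets do realise intersections. But the step everything hinges on is only asserted: for each $\mu\in\cm{\alg{A}}$ there must be an $a$ with $M(a)=(\dw{\mu})^\complement$, equivalently $S(a)=\{\mu\}$. Your suggestion ``via $\mathrm{Ac}$ directly'' is circular, and ``$a_\mu\ra c$ for suitable $c$'' is never carried out. Without this step you only have intersections of the principal upsets $M(a_\mu)=\up{\mu}$, and these do not give every upset; for example, the union of two incomparable principal upsets is missed.

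One fix is to take $c=a_{\mu^+}$, the least element of the filter $1/\mu^+$, and set $a=a_\mu\ra a_{\mu^+}$.
\begin{itemize}
\item Since $a_{\mu^+}\in 1/\mu^+\smallsetminus 1/\mu$, we get $a/\mu=1/\mu\ra\op_\mu=\op_\mu$. Hence $(1,a)\notin\nu$ for every $\nu\subseteq\mu$.
\item If $\nu\not\subseteq\mu$, then $\mu^+\subseteq\mu\vee\nu$. So $a_{\mu^+}$ lies in the filter generated by $1/\mu\cup 1/\nu$, i.e.\ $f\wedge a_\mu\leq a_{\mu^+}$ for some $f\in 1/\nu$. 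By residuation $f\leq a$, so $(1,a)\in\nu$.
\end{itemize}

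Alternatively, your abandoned first idea works once you notice that $M$ is order-preserving, not order-reversing. A finite Brouwerian semilattice is a distributive lattice, and the filters $1/\mu$ with $\mu\in\cm{\alg{A}}$ are meet-irreducible and hence prime. So $M(a\vee b)=M(a)\cup M(b)$, and any upset $\bigcup_i\up{\mu_i}$ equals $M(\bigvee_i a_{\mu_i})$. This is the standard representation the paper cites for (3).
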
  

\begin{proof}
It is clear that $S(a)$ is an antichain for any $a\in A$; in particular, $S(1) =
\emptyset$.  Take $a,b\in A$ with $a\neq b$. Then, $\theta(a,b)\neq
\mathrm{Eq}^{\alg{A}}$, and by a standard Zorn lemma argument we get that there
exists some $\mu\in \cm{\alg{A}}$ such that $(a,b)\in\mu^+\smallsetminus\mu$. 
Then by Lemma~\ref{lem:1-orderable-si} it follows that
$\{a/\mu,b/\mu\} = \{1/\mu,\op_\mu\}$ and therefore either
$\mu\in S(a)\smallsetminus S(b)$ or $\mu\in S(b)\smallsetminus S(a)$ showing that
$S$ is injective. 

It is easy to see that for any $a,b\in A$ and any $\mu\in\cm{\alg{A}}$ we have
$\mu\in S(a\ra b)$ if and only if $(a\ra b)/\mu = a/\mu\ra b/\mu = \op_\mu$
if and only if $a/\mu = 1/\mu$ and $b/\mu = \op_\mu$. We will use this fact in
the next part of the proof.

To show $S(a\ra b)\subseteq S(a)\ra S(b)$ take $\mu$ such that
$a/\mu\ra b/\mu = \op_\mu$. Then $a/\mu = 1/\mu$ and $b/\mu = \op_\mu$. Hence 
$\mu\in S(b)$ and $\up{\mu}\cap S(a) = \emptyset$, 
as $\nu\notin S(a)$ for any $\nu \supseteq \mu$ with
$(1,a)\in\nu$. So $\mu\in S(a)\ra S(b)$.

For the converse inclusion, take $\mu\in S(a)\ra S(b)$ so that
$\mu\in S(b)$ and $\up{\mu}\cap S(a) = \emptyset$. If
$(1,a)\notin \mu$, then for some $\nu\supseteq \mu$ we have
$(1,a)\in\nu^+\smallsetminus \nu$, but this implies
$\nu\in \up{\mu}\cap S(a)$ which is a contradiction. 
Therefore, $(1,a)\in \mu$, that is $1/\mu = a/\mu$, and consequently
$a/\mu \ra b/\mu = (1\ra b)/\mu = \op_\mu$ as required. 

Finally, assume $\alg{A}\in\var{Br}$, so $a\wedge b$ is defined for all $a,b\in
A$. Note that $\theta(1, a\wedge b) = \theta(1,a)\vee\theta(1,b)$.
To prove $S(a\wedge b) \subseteq S(a)\wedge S(b)$ take $\mu\in S(a\wedge b)$
so that $\mu\vee\theta(1,a\wedge b) = \mu\vee\theta(1,a)\vee\theta(1,b) =
\mu^+$. If $\mu\vee\theta(1,a)<\mu^+$ and $\mu\vee\theta(1,b)<\mu^+$, then
$\mu\vee\theta(1,a)\vee\theta(1,b) = \mu$, contradicting the assumption, so
without loss of generality $\mu\vee\theta(1,a) =\mu^+$ and therefore
$\mu\in S(a)$. If $\mu$ is not maximal in $S(a)\cup S(b)$, then there is a
$\nu\in S(b)$ such that $\mu<\nu$, since $S(a)$ is an antichain.
In that case,
$\mu\vee\theta(1,a)\vee\theta(1,b) = \mu^+\subseteq \nu$,
so we obtain $(1,b)\in \nu$, a contradiction. 

For the inclusion $S(a)\wedge S(b) \subseteq S(a\wedge b)$ take
$\mu \in S(a)\wedge S(b)$ so that $\mu$ is maximal in $S(a)\cup S(b)$.
Without loss of generality, assume $\mu \in S(a)$, that is,
$\mu\vee\theta(1,a) = \mu^+$.
We claim that $\mu\vee\theta(1,b) \subseteq \mu^+$.
Suppose $\mu\vee\theta(1,b) \supsetneq \mu^+$. Let $\nu\in\Cm{\alg{A}}$
be such that $\nu\geq\mu^+$ and $(1,b)\in\nu^+\smallsetminus\nu$. 
Then $\nu\vee\theta(1,b) = \nu^+$, so $\nu\in S(b)$,
and $\nu\supsetneq\mu$ contradicting the assumption that $\mu$ is maximal
in $S(a)\cup S(b)$. Hence, $\mu\vee\theta(1,b)\subseteq \mu^+$ as we claimed, and
therefore $\mu\vee\theta(1,a)\vee \theta(1,b) = \mu^+$, showing that
$\mu\in S(a\wedge b)$.

Then, (1) and (2) follow trivially, and (3) follows by the `moreover' part of
Lemma~\ref{lem:antichain-alg} and the standard representation of
Brouwerian semilattices as algebras of upsets.
\end{proof}  

\begin{theorem}\label{thm:repres} 
Let $\alg{A}\in\var{Hi}$. If $\alg{A}$ is finite, then
\begin{enumerate}
\item $S(\Irr{\alg{A}}) = \{\{\mu\}: \mu\in\cm{\alg{A}}\}$,\label{a}
\item $\Con{\mathbf{Ac}(\Cm{\alg{A}})} = \Con{\mathbf{Acm}(\Cm{\alg{A}})} 
  \cong\Con{\alg{A}}$.\label{b}
\end{enumerate}    
\end{theorem}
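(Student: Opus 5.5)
For (1) I will show that $S$ restricted to $\irr{\alg{A}}$ is a bijection onto the singletons of $\cm{\alg{A}}$. Since $\alg{A}$ is finite, $\Con{\alg{A}}$ is a finite distributive lattice. There is then the standard order-reversing bijection between $\J{\Con{\alg{A}}}$ and the meet-irreducibles $\cm{\alg{A}}$. It sends $\alpha$ to the largest congruence $\mu_\alpha$ that contains $\alpha_-$ but not $\alpha$. Equivalently, $\mu_\alpha$ is the unique $\mu$ with $\alpha\not\subseteq\mu$ and $\alpha\subseteq\mu^+$, or, in lattice terms, with $\mu\vee\alpha=\mu^+$.

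Take $a\in\irr{\alg{A}}$ and put $\alpha=\theta(1,a)$. By definition, $\mu\in S(a)$ means $\mu\vee\alpha=\mu^+$. This holds exactly when $\alpha\not\subseteq\mu$ and $\alpha\subseteq\mu^+$. In a finite distributive lattice, with $\alpha$ join-irreducible and $\mu$ meet-irreducible, this happens for exactly one $\mu$, namely $\mu_\alpha$. The argument runs as follows. The condition gives $\alpha\wedge\mu<\alpha$, hence $\alpha\wedge\mu\le\alpha_-$. Suppose $\nu$ is another such congruence. By distributivity, $\alpha\le\mu^+\wedge\nu^+$. If $\mu\ne\nu$, meet-irreducibility should force $\alpha\le\mu$ or $\alpha\le\nu$, a contradiction.

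I expect the uniqueness step to be the main point needing care. I would justify it through the Birkhoff duality $\Con{\alg{A}}\cong\mathrm{Up}(\Cm{\alg{A}})$. Under this duality $\alpha$ corresponds to an upset $U$, and $\alpha_-$ to an upset $U'$ with $U\smallsetminus U'$ a single point. Likewise $\mu$ corresponds to $\up{\mu}$ and $\mu^+$ to $\up{\mu}\smallsetminus\{\mu\}$. The condition $\alpha\not\subseteq\mu$ and $\alpha\subseteq\mu^+$ then translates to $\mu\in\HtM(\alpha)^\complement$-minimal, and so on. This pins $\mu$ down as the single point of $\HtM(\alpha_-)\smallsetminus\HtM(\alpha)$.

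So $S(a)=\{\mu_\alpha\}$. Since $a\mapsto\theta(1,a)$ is a bijection $\irr{\alg{A}}\to\J{\Con{\alg{A}}}$, as noted before Theorem~\ref{thm:irreducible-charact}, we get $S(\irr{\alg{A}})=\{\{\mu\}:\mu\in\cm{\alg{A}}\}$. The element $1\in\ov{\irr{\alg{A}}}$ maps to $\emptyset$, so the displayed equality is to be read for $\irr{\alg{A}}$ proper.

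For (2), write $P=\Cm{\alg{A}}$, which is finite. First I show $\Con{\alg{Ac}(P)}=\Con{\alg{Acm}(P)}$. By Lemma~\ref{lem:antichain-alg}, $\alg{Acm}(P)$ is a Brouwerian semilattice, and $\alg{Ac}(P)$ is its $\{\ra,1\}$-reduct. The implication $s(x,y)=x\ra y$ satisfies Theorem~\ref{thm:oprema}(1) in the variety $\var{Br}$, since $x\le y$ iff $x\ra y=1$ there. Hence Theorem~\ref{thm:oprema}(3) gives $\mathbf{Con}(B;\ra,1)=\Con{\alg{B}}$ for every $\alg{B}\in\var{Br}$; in particular $\Con{\alg{Ac}(P)}=\Con{\alg{Acm}(P)}$.

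Next I show $\Con{\alg{Ac}(P)}\cong\Con{\alg{A}}$. By part (1) together with Theorem~\ref{thm:mono}(1), $S(\ov{\Irr{\alg{A}}})$ is the subalgebra of $\alg{Ac}(P)$ consisting of $\emptyset$ and the singletons. This subalgebra is isomorphic to the pure algebra $\ov{P}$ with the reversed order; $S$ reverses order, matching the formula for $\{c\}\ra\{b\}$. By Theorem~\ref{thm:repr-fin-Hi} it has congruence lattice isomorphic to $\Con{\alg{A}}$. The same argument applied to the finite Hilbert algebra $\alg{Ac}(P)$ then finishes the proof. Its irreducible elements are exactly the singletons: by Theorem~\ref{thm:irreducible-charact}(3), $\{c\}$ is irreducible, and by (4) in $\alg{Acm}(P)$ the meet-irreducible antichains are the singletons. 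So $\Con{\alg{Ac}(P)}\cong\Con{\ov{\Irr{\alg{Ac}(P)}}}$, and the latter is the same pure algebra on $\ov{P}$, giving $\Con{\alg{Ac}(P)}\cong\Con{\alg{A}}$.
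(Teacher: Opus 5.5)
Your part (2) is essentially the paper's argument. Theorem~\ref{thm:oprema} gives the first equality. You then identify the irreducibles of $\alg{Ac}(\Cm{\alg{A}})$ as the singletons and apply Theorem~\ref{thm:repr-fin-Hi} to both algebras, whose pure skeletons coincide by part (1). Passing condition (4) from $\alg{Acm}$ to $\alg{Ac}$ silently uses that $\theta(1,C)$ is the same congruence in both, which the first equality supplies. Alternatively, $\{c\}\ra C=C\smallsetminus\{c\}$ for $c\in C$ rules out larger antichains directly via (3).

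For (1) your route differs from the paper's. The paper proves $|S(a)|\le 1$ by hand: two distinct $\mu,\nu\in S(a)$ are incomparable, so $\theta(1,a)\subseteq\nu^+\subseteq\mu\vee\nu$. Distributivity then gives $\theta(1,a)=(\theta(1,a)\cap\mu)\vee(\theta(1,a)\cap\nu)\subseteq\theta(1,a)_-$. It gets surjectivity separately, from complements in $\mathrm{I}[\alpha_-,\mu^+]$ and projectivity of intervals.

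You instead recognise $S(a)$ as $\{\kappa(\theta(1,a))\}$, where $\kappa\colon\J{\Con{\alg{A}}}\to\cm{\alg{A}}$ is the standard bijection of a finite distributive lattice. Uniqueness and surjectivity then come together from textbook lattice theory, which is cleaner.

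Your written justification of uniqueness, however, does not work as it stands:
\begin{itemize}
\item From $\alpha\le\mu^+\wedge\nu^+$, meet-irreducibility gives nothing. The missing step is exactly the paper's $\alpha\le\mu\vee\nu$ (from incomparability), followed by distributing.
\item In the duality sketch, $\mu$ is a \emph{maximal} point of $\HtM(\alpha)^\complement$, not a minimal one.
\item Also in that sketch, $U\subsetneq U'$, so the one-point difference is $U'\smallsetminus U$; your final sentence has this right.
\end{itemize}
The shortest fix uses your own definition. Your $\mu_\alpha$ is in fact the largest congruence not containing $\alpha$; this set is closed under joins because $\alpha$ is join-prime. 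So any $\mu$ with $\alpha\not\subseteq\mu$ lies below $\mu_\alpha$. If $\mu\subsetneq\mu_\alpha$, then $\alpha\subseteq\mu^+\subseteq\mu_\alpha$, a contradiction.

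One small slip: $\kappa$ is order-preserving, not order-reversing. You never use this, so nothing depends on it.
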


\begin{proof}
For~\eqref{a} let $a\in \irr{\alg{A}}$. Then there exists a $\mu\in\Cm{\alg{A}}$
such that $(1,a)\in \mu^+\smallsetminus\mu$, hence $\mu\in S(a)$. Seeking a
contradiction assume $|S(a)|\geq 2$ and take $\nu\in S(a)\smallsetminus\{\mu\}$. 
Since $S(a)$ is an antichain, we have $\mu\not\subseteq\nu$, and so
$\nu^+\subseteq \nu\vee\mu$. Hence, $\theta(1,a)\subseteq \nu\vee\mu$.
On the other hand, since $\nu,\mu\in S(a)$ we have
$\theta(1,a)\cap\nu\subseteq \theta(1,a)_-$ and 
$\theta(1,a)\cap\mu\subseteq \theta(1,a)_-$, so
$(\theta(1,a)\cap\nu)\vee(\theta(1,a)\cap\mu)\subseteq \theta(1,a)_-$.
Then, using the fact that $\var{Hi}$ is CD, we obtain
$\theta(1,a) = \theta(1,a)\cap(\nu\vee\mu) =
(\theta(1,a)\cap\nu)\vee(\theta(1,a)\cap\mu) \subseteq \theta(1,a)_-$ which is a
desired contradiction.

It follows that for each $a\in \irr{\alg{A}}$ we have exactly one
$\mu\in\Cm{\alg{A}}$ such that $a/\mu = \op_\mu$. Denoting this unique $\mu$
associated with $a$ by $\mu_a$, we have that $S(a) = \{\mu_a\}$.
Note that for any $\mu\in\cm{\alg{A}}$ there exists $\alpha\in
\J{\Con{\alg{A}}}$ such that $\alpha$ and $\mu$ are complements in the interval
$\mathrm{I}[\alpha_-, \mu^+]$. From this, since $\alpha = \theta(1,a)$ for some $a\in
\irr{\alg{A}}$, we obtain, using projectivity of intervals, that
$S(\Irr{\alg{A}}) = \{\{\mu\}: \mu\in\cm{\alg{A}}\}$ as required.

For~\eqref{b}, the first equality follows by Theorem~\ref{thm:oprema}, so
we only need to prove the second. To simplify notation put
$\alg{B} = \mathbf{Acm}(\Cm{\alg{A}})$.
By Theorem~\ref{thm:oprema} again, we obtain 
$\Con{\alg{B}} = \mathbf{Con}(B;\ra,1)$
and so $\J{\Con{\alg{B}}} = \J{\mathbf{Con}(B;\ra,1)}$,
from which it follows that
$\irr{\alg{B}} = \mathrm{I}(B;\ra,1) = 
\irr{\mathbf{Ac}(\Cm{\alg{A}})}$
as posets. From the definition of $\ra$ it is immediate that
$C\in \irr{\mathbf{Ac}(\Cm{\alg{A}})}$ if and only if $|C| = 1$.
Hence, by~\eqref{a},  
$\ov{\irr{\alg{A}}} \cong \ov{\irr{\alg{B}}}$
and then using Theorem~\ref{thm:repr-fin-Hi} we get the desired 
$\Con{\alg{A}}\cong\Con{\alg{B}}$.
\end{proof}  

The next result is crucial for our free algebra construction. It shows that
values of the map $S$ are closed under subsets.

\begin{theorem}\label{thm:transit} 
Let $\alg{A}\in\mathcal{V}$, where $\mathcal{V}\in \{\var{Hi},\var{Br}\}$.
If $\alg{A}$ is finite, then
for any $a\in A$ and any $L\subseteq \cm{\alg{A}}$ we have
$L\subseteq S(a)\implies L\in S(\alg{A})$.
\end{theorem}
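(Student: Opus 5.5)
Since $\alg{A}$ is finite, $S(\alg{A})$ is closed under $\ra$, and $S$ is an isomorphism onto its image by Theorem~\ref{thm:mono}, the natural plan is to produce every subset $L\subseteq S(a)$ as $S(b)\ra S(a) = S(b\ra a)$ for a suitable $b\in A$. By Definition~\ref{def:ant-order}, $S(b)\ra S(a) = \{\mu\in S(a): \up{\mu}\cap S(b)=\emptyset\}$. It therefore suffices to find $b$ such that $\up{\mu}\cap S(b)=\emptyset$ for every $\mu\in L$, while $\up{\nu}\cap S(b)\neq\emptyset$ for every $\nu\in S(a)\smallsetminus L$.

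We first handle a single removal. Fix $\nu\in S(a)$ and try to find $b_\nu$ with $\nu\in S(b_\nu)$ and $\up{\mu}\cap S(b_\nu)=\emptyset$ for all $\mu\in S(a)\smallsetminus\{\nu\}$. The candidate is an irreducible element: by Theorem~\ref{thm:repres}\eqref{a} there is $c\in\irr{\alg{A}}$ with $S(c)=\{\nu\}$. For $\mu\in S(a)$ with $\mu\neq\nu$ we have $\nu\notin\up{\mu}$, because $S(a)$ is an antichain. Hence $\up{\mu}\cap\{\nu\}=\emptyset$, while $\up{\nu}\cap\{\nu\}\neq\emptyset$. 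Consequently
$$
S(c)\ra S(a) = S(a)\smallsetminus\{\nu\}.
$$
Thus $S(c\ra a) = S(a)\smallsetminus\{\nu\}$, so the set $S(a)$ with one point removed is again in $S(\alg{A})$.

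Next we iterate. Since $\alg{A}$ is finite, $S(a)$ is finite, and we may list $S(a)\smallsetminus L = \{\nu_1,\dots,\nu_k\}$. Put $a_0=a$ and $a_{i}=c_i\ra a_{i-1}$, where $S(c_i)=\{\nu_i\}$. At each stage $S(a_{i-1})=S(a)\smallsetminus\{\nu_1,\dots,\nu_{i-1}\}\subseteq S(a)$ is still an antichain containing $\nu_i$, so the one-step argument applies again. Induction on $i$ then yields $S(a_k)=L$. The case $L=\emptyset$ is covered separately by $S(1)=\emptyset$.

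The main obstacle is justifying that every $\nu\in\cm{\alg{A}}$ equals $S(c)$ for some irreducible $c$, and that $S$ commutes with $\ra$ in the form $S(c\ra a)=S(c)\ra S(a)$. The second fact is Theorem~\ref{thm:mono}. The first is Theorem~\ref{thm:repres}\eqref{a}, which is where finiteness enters. For $\alg{A}\in\var{Br}$ we still only use $\ra$, and the same argument applies verbatim. Alternatively, in the finite $\var{Br}$ case one can appeal to Theorem~\ref{thm:mono}(3), since there every antichain lies in the image of $S$. Theorem~\ref{thm:repres} is stated for $\var{Hi}$, but it applies to the $\ra$-reduct of $\alg{A}$, whose congruences coincide with those of $\alg{A}$ by Theorem~\ref{thm:oprema}.
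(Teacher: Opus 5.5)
Your proof is correct and is essentially the paper's argument. The paper also uses Theorem~\ref{thm:repres}\eqref{a} to choose irreducibles $a_{k+1},\dots,a_n$ with $S(a_j)=\{\mu_{a_j}\}$ for the points of $S(a)\smallsetminus L$, and computes $S\bigl(a_{k+1}\ra(\dots(a_n\ra a)\dots)\bigr)=L$ in a single display, which is exactly your iterated one-point removal; your remarks on the $\var{Br}$ case (passing to the $\ra$-reduct, or invoking Theorem~\ref{thm:mono}(3)) make explicit a point the paper leaves implicit.
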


\begin{proof}
Fix some $a\in A$ and $L\subseteq S(a)$. By~Theorem~\ref{thm:repres}\eqref{a} we know
that $\cm{\alg{A}} =  \{\mu_a: a\in \irr{\alg{A}}\}$, so we can assume
$S(a) = \{\mu_{a_1},\dots,\mu_{a_n}\}$ for some antichain
$\{a_1,\dots,a_n\}\subseteq \irr{\alg{A}}$, and that $L =
\{\mu_{a_1},\dots,\mu_{a_k}\}$.  Then
\begin{align*}
  S\bigl(a_{k+1}\ra(a_{k+2}\ra( \dots (a_n\ra a)\dots\bigr) &= \\
\{\mu_{a_{k+1}}\}\ra\bigl(\{\mu_{a_{k+2}}\}\ra(\dots (\{\mu_{a_n}\}\ra
  S(a))\dots\bigr) &= \\
\{\mu_{a_{k+1}}\}\ra\bigl(\{\mu_{a_{k+2}}\}\ra(\dots (\{\mu_{a_n}\}\ra  
  \{\mu_{a_1},\dots,\mu_{a_k}\})\dots\bigr) &= \\
  \{\mu_{a_1},\dots,\mu_{a_n}\}\smallsetminus\{\mu_{a_{k+1}},\dots,\mu_{a_n}\} &= \\
\{\mu_{a_1},\dots,\mu_{a_k}\} &= L   \qedhere
\end{align*}  
\end{proof}

\section{Free algebra construction}\label{sec:free-Hi}

\subsection{Varieties of Hilbert algebras}
Let $\mathcal{V}$ be a variety of Hilbert algebras.
We will construct the free $\mathcal{V}$-algebra
$\FV(n)$ from the poset of its completely meet-irreducible congruences,
itself constructed inductively by layers.
Let $X = \{x_1,\dots, x_n\}$ be the set of free generators of
$\FV(n)$. We begin by decomposing $\Cm{\FV(n)}$ into layers 
$$ 
P_k(n) = \{\mu\in \cm{\FV(n)}: h(\mu) = k\}
$$
where for any finite algebra $\alg{A}$ the map
$h\colon \cm{\alg{A}}\to \mathbb{N}$ is the map giving the height of
$\mu$, that is, the length of the longest chain in $\up{\mu}\subseteq
\cm{\alg{A}}$. The same map applies to algebras, $h(\alg{A})$ being the 
length of the longest chain in $\Cm{\alg{A}}$. Trivially, then
$$
\cm{\FV(n)} = \bigcup_{k=1}^\ell P_k(n)
$$
for some $\ell$, in general depending on $\mathcal{V}$.
We will see shortly that $\ell$ cannot exceed $n$. 

\begin{lemma}\label{lem:P1}
For any $J\subsetneq X$ let
$f_J\colon X\to\{0,1\}$ be the map given by
$$
f_J(x) =\begin{cases}
  1 & \text{ if } x\in J,\\
  0 & \text{ if } x\notin J,
\end{cases}
$$
where $\{0,1\}$ is the universe of $\alg{2}$.
Let $\ov{f}_J\colon \FV(n)\twoheadrightarrow \mathbf{2}$
be the homomorphism uniquely extending $f_J$, and let
$\mu_J = \ker \ov{f}_J$. Then
$P_1(n) = \{\mu_J: J\subsetneq X\}$.
\end{lemma}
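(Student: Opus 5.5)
Here $P_1(n)$ is the set of maximal elements of $\Cm{\FV(n)}$, i.e.\ the $\mu$ with $h(\mu)=1$. The plan is to show that these are exactly the completely meet-irreducible congruences whose quotient is $\alg{2}$, and then to identify those congruences with the kernels $\mu_J$.

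\emph{Step 1 (quotient by a maximal $\mu$ is $\alg{2}$).} Take $\mu\in\cm{\FV(n)}$ maximal. Since $\FV(n)$ is finite, $\mu^+$ is the unique cover of $\mu$, and maximality in $\Cm{\FV(n)}$ forces $\mu^+$ to be the total congruence. Indeed, if $\mu^+$ were not total, some completely meet-irreducible congruence would lie above $\mu^+$ (Birkhoff), and it would be strictly above $\mu$. So $\FV(n)/\mu$ is subdirectly irreducible, and its monolith $\mu^+/\mu$ is the total congruence. By Lemma~\ref{lem:1-orderable-si}, every block of the monolith other than $1/\mu^+$ is a singleton. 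A single block with $|1/\mu^+|=2$ means the quotient has exactly two elements, $\op$ and $1$ with $\op<1$. The Hilbert algebra axioms then force $\FV(n)/\mu\cong\alg{2}$.

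\emph{Step 2 (such a $\mu$ is some $\mu_J$).} Let $\pi$ be the resulting surjection $\FV(n)\twoheadrightarrow\alg{2}$, so $\mu=\ker\pi$. Put $J=\{x\in X:\pi(x)=1\}$. Then $\pi$ agrees with $\ov{f}_J$ on the generators, hence $\pi=\ov{f}_J$ and $\mu=\mu_J$. It remains to see $J\neq X$: if $J=X$, the image of $\pi$ would be the subalgebra generated by $1$, namely $\{1\}$, contradicting surjectivity.

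\emph{Step 3 (each $\mu_J$ is maximal).} Conversely, for $J\subsetneq X$, note that $\alg{2}\in\mathcal{V}$, because $\alg{2}$ is a subalgebra of every nontrivial Hilbert algebra: for $a\neq 1$ the set $\{a,1\}$ is closed under $\ra$ and is isomorphic to $\alg{2}$. (The trivial variety is set aside, where the claim is vacuous or degenerate.) So $\ov{f}_J$ exists, and it is onto because some $x\notin J$ maps to $0$ and $1$ is always in the image. Hence $\FV(n)/\mu_J\cong\alg{2}$. Since $\alg{2}$ is simple, $\mu_J$ is a coatom of $\Con{\FV(n)}$, therefore completely meet-irreducible. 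It is also maximal in $\Cm{\FV(n)}$, since the only congruence above it is the total congruence, which is not meet-irreducible. Thus $h(\mu_J)=1$.

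\emph{Expected obstacle.} The only delicate point is Step 1: justifying that maximality in $\Cm{\FV(n)}$ makes $\mu^+$ total, and that a subdirectly irreducible algebra whose monolith is total is just the two-element algebra. Both follow from finiteness together with Lemma~\ref{lem:1-orderable-si} (or Theorem~\ref{thm:Hi-si}, with $\alg{A}$ trivial). Everything else is routine bookkeeping with homomorphisms out of the free algebra.
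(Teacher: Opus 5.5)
Your proof is correct and follows the same route as the paper: identify $P_1(n)$ with the congruences whose quotient is $\alg{2}$, read off $J$ from the values of the generators, and use surjectivity onto $\alg{2}$ to exclude $J=X$. You also fill in what the paper calls ``clear'', namely that maximal elements of $\Cm{\FV(n)}$ have quotient $\alg{2}$ (via Lemma~\ref{lem:1-orderable-si}) and that $\alg{2}\in\mathcal{V}$. Your properness condition is the accurate one: $f_J$ must not be constantly $1$, rather than $f_J$ being surjective as the paper puts it, since $J=\emptyset$ is allowed.
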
  
  
\begin{proof}
Clearly, $\mu\in P_1(n)$ if and only if
$\FV(n)/\mu \cong \alg{2}$. It is also clear that
for $\mu,\nu\in P_1(n)$ we have $\mu\neq\nu$ if and only if
$\mu$ and $\nu$ are kernels of homomorphisms
extending, respectively, maps $f_\mu\colon X\to \{0,1\}$ and
$f_\nu\colon X\to \{0,1\}$. These maps must be surjective as otherwise
$\FV(n)/\mu$ or $\FV(n)/\nu$ would be the trivial algebra. Hence
$J$ must be a proper subset of $X$. 
\end{proof}

Note that the above construction of $P_1(n)$ does not depend on $\mathcal{V}$
at all: it is the same for all subvarieties of $\var{Hi}$. This is due to the
fact that the two-element Tarski algebra belongs to every nontrivial
$\mathcal{V}$. Specific features of $\mathcal{V}$ 
start playing a role from layer $P_2(n)$,
since some subdirectly irreducible algebras of height
$\geq 2$ may not belong to $\mathcal{V}$. To keep our reasoning as neat as
possible we will first consider the case $\mathcal{V} = \var{Hi}$, and then
generalise.

\begin{lemma}\label{lem:layers}
Let $\mathcal{V} = \var{Hi}$ and $k\geq 2$.
For any $G\in \mathrm{Up}\left(\bigcup_{r=1}^{k-1}P_r(n)\right)$
such that $G\cap P_{k-1}(n)\neq \emptyset$, and for any $L$ such that
$L\subsetneq X\cap 1/\bigcap G$,
let $f^G_L\colon X \to  \bigl(\FV(n)/\bigcap G\bigr)^\oplus$ be the map
given by 
$$
f^G_L(x) =\begin{cases}
  1 & \text{ if } x\in L,\\
  \op & \text{ if } x\in \left(1/\bigcap G\right)\smallsetminus L,\\
  x/\bigcap G & \text{ if } x\notin 1/\bigcap G.
\end{cases} 
$$
Let $\ov{f}^G_L$ be the homomorphism uniquely extending $f^G_L$ and
let $\mu^G_L = \ker\ov{f}^G_L$. Then
$$
P_k(n) =
\left\{\mu^G_L: G\in \mathrm{Up}\left(\bigcup_{r=1}^{k-1}P_r(n)\right),\ 
G\cap P_{k-1}(n)\neq \emptyset, \
L\subsetneq X\cap 1/\bigcap G  
\right\}.
$$
Furthermore, for $r>n$ we have $P_r(n) = \emptyset$; hence
$\bigcup_{k=1}^\infty P_k(n) = \bigcup_{k=1}^{n} P_k(n)$.
\end{lemma}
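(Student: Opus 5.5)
The plan is to characterise each $\mu\in P_k(n)$ through its cover $\mu^+$ together with the masting description of subdirectly irreducibles (Theorem~\ref{thm:Hi-si}). Write $\alg{F}=\FV(n)$. Fix $\mu\in\cm{\alg{F}}$ with $h(\mu)=k\ge 2$, and put $G=\up{\mu}\smallsetminus\{\mu\}$. Since $\alg{F}$ is finite and $\Con{\alg{F}}$ is distributive, completely meet-irreducible congruences are meet-prime. Hence $\mu^+=\bigcap G$, and the members of $\cm{\alg{F}}$ containing $\bigcap G$ are exactly those of $G$. It follows that $G$ is an upset contained in $\bigcup_{r<k}P_r(n)$ and that it meets $P_{k-1}(n)$.

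\textbf{Step 1: $\mu$ has the form $\mu^G_L$.} By Theorem~\ref{thm:Hi-si}, $\alg{F}/\mu\cong\alg{A}^\oplus$ with $\alg{A}\cong\alg{F}/\mu^+=\alg{F}/\bigcap G$. Under this isomorphism $\alg{A}$ is identified with the subuniverse $A^\oplus\smallsetminus\{\op\}$, and $1/\mu^+$ collapses exactly $\{1,\op\}$. So every generator $x\notin 1/\bigcap G$ goes to $x/\bigcap G$, and every $x\in 1/\bigcap G$ goes to $1$ or to $\op$. Let $L$ be the set of generators sent to $1$. If $L=X\cap 1/\bigcap G$, then all generators land in the subuniverse $A^\oplus\smallsetminus\{\op\}$, so the quotient map is not onto, which is a contradiction. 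Hence $L\subsetneq X\cap 1/\bigcap G$. Since a homomorphism is determined by its values on generators, the quotient map equals $\ov{f}^G_L$, and therefore $\mu=\mu^G_L$.

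\textbf{Step 2: every $\mu^G_L$ lies in $P_k(n)$.} Take $G$ and $L$ as in the statement. Since $\mathcal{V}=\var{Hi}$, the algebra $(\alg{F}/\bigcap G)^\oplus$ belongs to $\mathcal{V}$, so $\ov{f}^G_L$ exists. Composing $\ov{f}^G_L$ with the quotient by the monolith of the masted algebra gives the canonical map $\alg{F}\to\alg{F}/\bigcap G$, which is onto. Thus the image of $\ov{f}^G_L$ meets every monolith class. It also contains $\op$, because some generator is sent there, so $\ov{f}^G_L$ is surjective. Consequently $\mu^G_L$ is completely meet-irreducible, since the masted algebra is subdirectly irreducible, and its cover is $\bigcap G$. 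By the meet-primeness remark, $\up{\mu^G_L}\smallsetminus\{\mu^G_L\}=G$, and so $h(\mu^G_L)=1+(k-1)=k$.

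\textbf{Step 3: the bound $P_r(n)=\emptyset$ for $r>n$.} Step~1 shows $X\cap 1/\mu=L\subsetneq X\cap 1/\mu^+$, so passing to the cover strictly enlarges the set of generators in the $1$-block. Along a maximal chain $\mu=\mu_1\subsetneq\dots\subsetneq\mu_r$ in $\up{\mu}$, the sets $X\cap 1/\mu_i$ therefore strictly increase. By Lemma~\ref{lem:P1} the last set is a proper subset $J\subsetneq X$, and the first has at least $0$ elements, so $r\le n$.

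The main obstacle I expect is the surjectivity argument in Step~2 and the identification of $\alg{A}$ with $A^\oplus\smallsetminus\{\op\}$ used in Step~1. In particular, I need to keep track of the renaming of the top of $\alg{A}$ to $\op$ correctly, so that generators in the filter really map to $1$ or $\op$ and not elsewhere. I would settle this first, carefully via the last clauses of Theorem~\ref{thm:Hi-si}.
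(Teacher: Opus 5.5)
Your proposal is correct and follows essentially the same route as the paper: for $\mu\in P_k(n)$ take $G=\up{\mu}\smallsetminus\{\mu\}$ and $L=X\cap 1/\mu$; for the converse, show that $\ov{f}^G_L$ maps onto the subdirectly irreducible algebra $(\FV(n)/\bigcap G)^\oplus$ with $(\mu^G_L)^+=\bigcap G$; and bound the height via the strictly increasing sets $X\cap 1/\mu_i$ along a chain. You spell out points the paper leaves as ``by construction'' (meet-primeness giving $\up{\mu^G_L}\smallsetminus\{\mu^G_L\}=G$, and surjectivity via the monolith quotient), and you rightly dispense with the paper's nominal induction on $k$, whose hypothesis is never actually used.
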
  

\begin{proof}
We proceed by induction on $k$. For $k=1$, the claim follows by
Lemma~\ref{lem:P1}.
Now, assume inductively that for any $r\in \{1,\dots, k-1\}$
the set $P_r(n)$ satisfies the claim. 
Take any $G\in \mathrm{Up}\bigl(\bigcup_{r=1}^{k-1}
P_r(n)\bigr)$ such that $G\cap P_{k-1}(n)\neq \emptyset$, take any
$L\subsetneq \bigcap\{X\cap 1/\varphi: \varphi \in G\}
= X\cap 1/\bigcap G$.
To simplify notation put $\gamma = \bigcap G$.
Then, reasoning as in the base case, we obtain that
\begin{itemize}
\item[(i)] $\gamma\in \con{\FV(n)}$ is of height $k-1$,
\item[(ii)] $X\cap 1/\gamma = \bigcap\{X\cap 1/\varphi: \varphi \in G\}$, and
\item[(iii)] $\FV(n)/\gamma$ is generated by
  $\{x_i/\gamma: x_i\notin 1/\gamma\}$.
\end{itemize}  
Let $f^G_L\colon X \to  \left(\FV(n)/\gamma\right)^\oplus$
be given by 
$$
f^G_L(x_i) =\begin{cases}
  1 & \text{ if } x_i\in L\\
  \op & \text{ if } x_i\in 1/\gamma\smallsetminus L\\
  x_i/\gamma & \text{ if } x_i\notin 1/\gamma
\end{cases}
$$
Then, the homomorphism
$\ov{f}^G_L\colon \FV(n) \to \left(\FV(n)/\gamma\right)^\oplus$
uniquely extending $f^G_L$ is an epimorphism.  Let $\mu^G_L = \ker\ov{f}^G_L$.
The following hold by construction:
\begin{itemize}
\item[(i)] $\mu^G_L\in P_k(n)$,
\item[(ii)] $(\mu^G_L)^+ = \gamma$, and
\item[(iii)] $L = X\cap 1/\mu^G_L$.
\end{itemize}
This shows the right-to-left inclusion.

For the converse inclusion, take $\mu\in P_k(n)$, and let  
$G = \{\varphi\in\cm{\FV(n)}: \mu<\varphi\}$. Then
$G\in\mathrm{Up}\bigl(\bigcup_{r=1}^{k-1}P_r(n)\bigr)$ and 
$G\cap P_{k-1}(n)\neq\emptyset$. We know that 
$x_i/\mu = \op_\mu$ for some $x_i\in X$, and that
$1/\mu^+ = 1/\mu\cup\{\op_\mu\} = 1/\mu\cup x_i/\mu$. 
Let $L = X\cap 1/\mu$. Then, 
$L\subsetneq X\cap 1/\mu^+ =  X\cap 1/\bigcap G =
\bigcap\{X\cap 1/\varphi: \mu<\varphi\}$. 
Therefore, $\mu = \mu^G_L$, finishing the proof of the main part.

For the `furthermore' part, consider a chain
$\mu_1\supsetneq\dots\supsetneq\mu_r$ of congruences
in $\Cm{\FV(n)}$. It induces a chain
$\FV(n)/\mu_r\onto \dots \onto \FV(n)/\mu_1\cong \alg{2}$
of subdirectly irreducible quotients of $\FV(n)$.  Then
for each $j \in \{1,\dots,r\}$ we have $x_i/\mu_j = \op_{\mu_j}$ for some $i\in
\{1,\dots,n\}$, and moreover $x_i/\mu_\ell = 1/\mu_\ell$ for every
$\ell>j$; in particular, distinct $j$ require distinct $i$. By pigeonhole principle $r \leq n$.
\end{proof}  

Now we generalise Lemma~\ref{lem:layers} to an arbitrary variety
$\mathcal{V}\subseteq\var{Hi}$. A crucial ingredient in the lemma is
the map $f^G_L\colon X \to  \bigl(\FV(n)/\bigcap G\bigr)^\oplus$, but this map
is well defined only if $\bigl(\FV(n)/\bigcap G\bigr)^\oplus\in\mathcal{V}$.
We need to take it into account.

\begin{lemma}\label{lem:layers-gen}
Let $\mathcal{V}\subseteq\var{Hi}$ and $k\geq 2$. For any
$G\in \mathrm{Up}\left(\bigcup_{r=1}^{k-1}P_r(n)\right)$ 
such that $G\cap P_{k-1}(n)\neq \emptyset$ and
$\bigl(\FV(n)/\bigcap G\bigr)^\oplus\in\mathcal{V}$,
and for any $L$ such that $L \subsetneq X\cap 1/\bigcap G$,
let $f^G_L\colon X \to  \bigl(\FV(n)/\bigcap G\bigr)^\oplus$ be the map
given by 
$$
f^G_L(x) =\begin{cases}
  1 & \text{ if } x\in L,\\
  \op & \text{ if } x\in \left(1/\bigcap G\right)\smallsetminus L,\\
  x/\bigcap G & \text{ if } x\notin 1/\bigcap G.
\end{cases} 
$$
Let $\ov{f}^G_L$ be the homomorphism uniquely extending $f^G_L$ and
let $\mu^G_L = \ker\ov{f}^G_L$. Then
\begin{multline*}
P_k(n) =
\biggl\{\mu^G_L: G\in \mathrm{Up}\left(\bigcup_{r=1}^{k-1}P_r(n)\right),\ 
G\cap P_{k-1}(n)\neq \emptyset, \\
\bigl(\FV(n)/\textstyle\bigcap G\bigr)^\oplus\in\mathcal{V}, \
L\subsetneq X\cap 1/\bigcap G  
\biggr\}.
\end{multline*}
Furthermore, let $m = \min\{n,\ell\}$, where $\ell$ is the length of the longest chain
in $\Cm{\FV(n)}$. Then, for $r>m$ we have $P_r(n) = \emptyset$; hence
$\bigcup_{k=1}^\infty P_k(n) = \bigcup_{k=1}^{m} P_k(n)$.
\end{lemma}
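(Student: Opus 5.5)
We follow the proof of Lemma~\ref{lem:layers} step by step, and change it only where the hypothesis $\mathcal{V}=\var{Hi}$ was used. We argue by induction on $k$, with base case Lemma~\ref{lem:P1}. That lemma is valid for every nontrivial $\mathcal{V}\subseteq\var{Hi}$, since $\alg{2}\in\mathcal{V}$.

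For the right-to-left inclusion, fix $G$ and $L$ as in the statement and put $\gamma=\bigcap G$. Exactly as in Lemma~\ref{lem:layers}, $\gamma$ has height $k-1$, $X\cap 1/\gamma=\bigcap\{X\cap 1/\varphi:\varphi\in G\}$, and $\FV(n)/\gamma$ is generated by the $x_i/\gamma$ with $x_i\notin 1/\gamma$. The only new point is that we need $(\FV(n)/\gamma)^\oplus\in\mathcal{V}$. This is exactly what makes the map $f^G_L$ into a $\mathcal{V}$-algebra extend, by freeness of $\FV(n)$ in $\mathcal{V}$, to a homomorphism $\ov{f}^G_L$. The extra hypothesis supplies this. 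Surjectivity holds because the images contain $1$, $\op$ (since $L$ is a proper subset, some generator maps to $\op$) and generators of the copy $\FV(n)/\gamma\cong(\FV(n)/\gamma)^\oplus\smallsetminus\{\op\}$; here we use the ``furthermore'' part of Theorem~\ref{thm:Hi-si}. By Theorem~\ref{thm:Hi-si} the target is subdirectly irreducible. Hence $\mu^G_L\in\cm{\FV(n)}$, $(\mu^G_L)^+=\gamma$, and $\mu^G_L\in P_k(n)$, as before.

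For the left-to-right inclusion, take $\mu\in P_k(n)$, let $G=\{\varphi\in\cm{\FV(n)}:\mu<\varphi\}$ and $L=X\cap 1/\mu$. The argument of Lemma~\ref{lem:layers} gives $\mu^+=\bigcap G$, $G\cap P_{k-1}(n)\neq\emptyset$, and $L\subsetneq X\cap 1/\bigcap G$. The new condition holds automatically. The quotient $\FV(n)/\mu$ lies in $\mathcal{V}$ and is subdirectly irreducible. By Theorem~\ref{thm:Hi-si} it is isomorphic to $(\FV(n)/\mu^+)^\oplus$, because $\FV(n)/\mu$ modulo its monolith is $\FV(n)/\mu^+$. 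Thus $(\FV(n)/\bigcap G)^\oplus\in\mathcal{V}$. Finally $\mu=\mu^G_L$, since both are kernels of homomorphisms into isomorphic algebras that agree on the generators.

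The ``furthermore'' part combines two bounds. The pigeonhole argument of Lemma~\ref{lem:layers} uses no property of $\mathcal{V}$, so it still gives $P_r(n)=\emptyset$ for $r>n$. By definition of $\ell$ as the length of the longest chain, and of $h$, we also get $P_r(n)=\emptyset$ for $r>\ell$. The main obstacle is the step in the left-to-right inclusion where we identify $\FV(n)/\mu$ with the masting of $\FV(n)/\mu^+$ compatibly with the generators. This needs the precise form of the isomorphism in Theorem~\ref{thm:Hi-si}, namely that removing $\op$ recovers the quotient by the monolith. Everything else is a verbatim repetition of Lemma~\ref{lem:layers}.
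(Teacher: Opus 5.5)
Your proof is correct and follows the paper's, which simply declares the main part identical to Lemma~\ref{lem:layers}. You usefully make explicit what the paper leaves implicit: the masting condition holds automatically in the left-to-right direction, because $\FV(n)/\mu\cong(\FV(n)/\mu^+)^\oplus$ by Theorem~\ref{thm:Hi-si}. For the ``furthermore'' part, the paper argues that once $G$ meets $P_m(n)$ the algebra $\bigl(\FV(n)/\bigcap G\bigr)^\oplus$ lies outside $\mathcal{V}$, so the layer construction halts. You instead read $P_r(n)=\emptyset$ for $r>\ell$ directly off the definition of $\ell$, and for $r>n$ off the pigeonhole argument; both routes are valid, and yours is the more immediate.
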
  

\begin{proof}
The proof of the main part is identical to the proof of Lemma~\ref{lem:layers}.
For the `furthermore' part, if $\ell \geq n$, then the proof of 
Lemma~\ref{lem:layers} applies. Assume $m = \ell<n$ and consider
$G\in \mathrm{Up}\left(\bigcup_{r=1}^{m}P_r(n)\right)$ 
such that $G\cap P_{m}(n)\neq \emptyset$. Then
$h\bigl(\FV(n)/\bigcap G\bigr) = \ell$, and
therefore $\bigl(\FV(n)/\bigcap G\bigr)^\oplus\notin\mathcal{V}$.
Hence the construction of layers stops at $P_m(n)$, that is,
$P_{m+1}(n) = \emptyset$. 
\end{proof}

Now, recall the map $S\colon\alg{A}\to\alg{Ac}(\Cm{\alg{A}})$ defined
in Theorem~\ref{thm:mono} by~\eqref{eq:S}. Applying it to
$\FV(n)$, we get
$S(x) = \{\mu\in\cm{\FV(n)}:\mu\vee\theta(1,x) = \mu^+\}$,
for any $x\in X$. Equivalently,
$$
S(x) = \left\{\mu\in\cm{\FV(n)}: x\notin 1/\mu \mathbin{\&}
x\in \bigcap\{1/\nu: \nu\supsetneq\mu\}\right\}
$$
so $S$ selects certain antichains from $\Cm{\FV(n)}$.

\begin{theorem}[Free Hilbert algebras]\label{thm:free-Hilbert-alg}
Let $\mathcal{V}\subseteq\var{Hi}$. For any $n > 0$, we have
$$
\FV(n)\cong \left(\bigcup_{i=1}^n\pw(S(x_i));\ra,\emptyset\right)
\leq \alg{Ac}(\Cm{\FV}(n)).
$$
\end{theorem}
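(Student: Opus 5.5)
By Theorem~\ref{thm:mono}(1), $S$ embeds $\FV(n)$ into $\alg{Ac}(\Cm{\FV(n)})$, with image $S(\FV(n))$. It therefore suffices to prove
$$
S(\FV(n)) = \bigcup_{i=1}^n\pw(S(x_i)).
$$
I would prove the two inclusions separately.

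For $\supseteq$, I would simply cite Theorem~\ref{thm:transit}. Since $\FV(n)$ is finite, every subset $L\subseteq S(x_i)$ equals $S(a)$ for some $a\in\FV(n)$.

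For $\subseteq$, I would argue by induction on terms. Every element of $\FV(n)$ is of the form $t(x_1,\dots,x_n)$ for some $t\in\mathrm{Term}_{\{\ra,1\}}(n)$. Since $S$ is a homomorphism, it is enough to show that $\mathcal{F}\deq\bigcup_{i}\pw(S(x_i))$ contains $S(1)=\emptyset$ and the $S(x_i)$, and is closed under $\ra$.
\begin{itemize}
\item The first two facts are immediate: $\emptyset$ and $S(x_i)$ are subsets of $S(x_i)$.
\item For closure, take $C,B\in\mathcal{F}$ with $B\subseteq S(x_j)$. By Definition~\ref{def:ant-order}, $C\ra B=\{b\in B:\up{b}\cap C=\emptyset\}\subseteq B\subseteq S(x_j)$, so $C\ra B\in\pw(S(x_j))$.
\end{itemize}
Hence $\mathcal{F}$ is a subuniverse of $\alg{Ac}(\Cm{\FV(n)})$ containing the generators $S(x_i)$. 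It therefore contains the subalgebra they generate, and that subalgebra is $S(\FV(n))$ because $S$ is a homomorphism and the $x_i$ generate $\FV(n)$. Note that the argument uses only that $\ra$ never enlarges its second argument.

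Combining both inclusions gives $S(\FV(n))=\mathcal{F}$, and $S$ is the required isomorphism. The operations on $\mathcal{F}$ are the restrictions from $\alg{Ac}$, with $1$ interpreted as $\emptyset=S(1)$.

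The main obstacle would be the $\supseteq$ direction, namely realising an arbitrary subset of $S(x_i)$ as a value of $S$. This is exactly what Theorem~\ref{thm:transit} supplies, via the irreducible elements $a_j$ with $S(a_j)=\{\mu_{a_j}\}$. Everything else is formal.
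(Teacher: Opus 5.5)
Your proposal is correct and follows the paper's proof essentially verbatim: the inclusion $\supseteq$ comes from Theorem~\ref{thm:transit}, the inclusion $\subseteq$ from the fact that $S(t_1)\ra S(t_2)\subseteq S(t_2)$, and Theorem~\ref{thm:mono} then finishes the argument. Phrasing $\subseteq$ as closure of $\bigcup_i\pw(S(x_i))$ under $\ra$, rather than as an induction on terms, is only a repackaging of the same argument.
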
  

\begin{proof}
We show that $S(\FV(n)) =\bigcup_{i=1}^n\pw(S(x_i))$. The 
right-to-left inclusion follows from Theorem~\ref{thm:transit}.
For the reverse inclusion we proceed by induction on complexity of a term
$t\in\FV(n)$. The base holds by definition, so
let $t = t_1\ra t_2$. Then $S(t_1\ra t_2) = S(t_1)\ra S(t_2)\subseteq S(t_2)$ by
definition of $\ra$ in $\alg{Ac}(\Cm{\FV(n)})$, and so by the inductive
hypothesis $S(t_2)\subseteq S(x)$ for some $x\in X$, as required.
Now to finish the proof we invoke Theorem~\ref{thm:mono}.  
\end{proof}

\subsection{Varieties of Brouwerian semilattices}

Now consider a variety $\mathcal{W}$ of Brouwerian semilattices.
First, we show that the posets
$\Cm{\FW}(n)$ and $\Cm{\alg{F}}_{f(\mathcal{W})}(n)$ are isomorphic,
where $f:\Lambda(\var{Br})\to\Lambda(\var{Hi})$ is the embedding of
Theorem~\ref{thm:subvariety-isom}. Indeed, we will show more.

\begin{theorem}\label{thm:same-con}
Let $\mathcal{W}\in\Lambda(\var{Br})$. Then,
$\langle \alg{F}_{f(\mathcal{W})}(n)\rangle \cong
\FW(n)$. Hence, $\Con{\FW}(n)\cong \Con{\alg{F}}_{f(\mathcal{W})}(n)$, and obviously  
$\Cm{\FW}(n)\cong \Cm{\alg{F}}_{f(\mathcal{W})}(n)$.  
\end{theorem}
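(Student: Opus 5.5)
We aim to show that $\langle \alg{F}_{f(\mathcal{W})}(n)\rangle$ has the universal property of $\FW(n)$. The two $\Con$ and $\Cm$ isomorphisms then follow from the ``moreover'' part of Lemma~\ref{lem:min-ext}, since $\Con{\alg{A}}\cong\Con{\langle\alg{A}\rangle}$. Put $\mathcal{V}=f(\mathcal{W})=S(\mathcal{W}^\ra)$, using Lemma~\ref{lem:f-charact}, and write $\alg{A}=\alg{F}_{\mathcal{V}}(n)$ with free generators $x_1,\dots,x_n$.

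\textbf{Step 1: $\langle\alg{A}\rangle\in\mathcal{W}$.} Since $\alg{A}\in\mathcal{V}=S(\mathcal{W}^\ra)$, there is some $\alg{C}\in\mathcal{W}$ with an embedding $\alg{A}\hookrightarrow\alg{C}^\ra$. Lemma~\ref{lem:min-ext} extends this to an embedding $\langle\alg{A}\rangle\hookrightarrow\alg{C}$, so $\langle\alg{A}\rangle\in S(\mathcal{W})=\mathcal{W}$. This is also recorded in the remark ``$\alg{A}\in f(\mathcal{V})\iff\langle\alg{A}\rangle\in\mathcal{V}$''.

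\textbf{Step 2: $\langle\alg{A}\rangle$ is generated by the $x_i$ as a Brouwerian semilattice.} Let $\alg{D}$ be the $\{\wedge,\ra,1\}$-subalgebra of $\langle\alg{A}\rangle$ generated by $X$. Then $\alg{D}$ contains $\alg{A}$, because $\alg{A}$ is $\ra$-generated by $X$. The inclusion $\alg{A}\hookrightarrow\alg{D}^\ra$ therefore extends, by Lemma~\ref{lem:min-ext}, to an embedding $\langle\alg{A}\rangle\hookrightarrow\alg{D}$. Composing with $\alg{D}\le\langle\alg{A}\rangle$ gives an endomorphism of $\langle\alg{A}\rangle$ that restricts to the identity on $\alg{A}$. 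By the uniqueness clause of Lemma~\ref{lem:min-ext} this endomorphism is the identity, so $\alg{D}=\langle\alg{A}\rangle$.

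\textbf{Step 3: the universal property.} Let $\alg{C}\in\mathcal{W}$ and let $g\colon X\to C$ be any map. Replacing $\alg{C}$ by the subalgebra generated by $g(X)$, we may assume $\alg{C}$ is finite by local finiteness. Since $\alg{C}^\ra\in\mathcal{V}$, the map $g$ extends to a $\{\ra,1\}$-homomorphism $h\colon\alg{A}\to\alg{C}^\ra$.

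The main obstacle is that $h$ need not be injective, while Lemma~\ref{lem:min-ext} only extends embeddings. We handle this by factoring $h$ through its kernel. Let $\varphi=\ker h\in\Con{\alg{A}}$. By Lemma~\ref{lem:min-ext}, $\varphi$ corresponds to a congruence $\hat\varphi$ of $\langle\alg{A}\rangle$ with $\hat\varphi\cap A^2=\varphi$; concretely, take the filter generated by $1/\varphi$.

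We claim that $\langle\alg{A}/\varphi\rangle\cong\langle\alg{A}\rangle/\hat\varphi$. The algebra $\alg{A}/\varphi$ embeds into $(\langle\alg{A}\rangle/\hat\varphi)^\ra$. The quotient $\langle\alg{A}\rangle/\hat\varphi$ is $\wedge$-generated by the image of $\alg{A}$. The argument of Step 2 then identifies $\langle\alg{A}/\varphi\rangle$ with $\langle\alg{A}\rangle/\hat\varphi$; this is essentially fact (iii) in the proof of Lemma~\ref{lem:f-charact}.

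Now $\alg{A}/\varphi\hookrightarrow\alg{C}^\ra$, so Lemma~\ref{lem:min-ext} yields $\langle\alg{A}\rangle/\hat\varphi\hookrightarrow\alg{C}$ extending $h$. Precomposing with the quotient map gives a Brouwerian semilattice homomorphism $\langle\alg{A}\rangle\to\alg{C}$ extending $g$. Uniqueness of this extension follows from Step 2.

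Hence $\langle\alg{A}\rangle$ is free in $\mathcal{W}$ on $X$, that is, $\langle\alg{A}\rangle\cong\FW(n)$.
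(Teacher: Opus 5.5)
Your proof is correct, but it is organised differently from the paper's. The paper never tests $\langle\alg{F}_{f(\mathcal{W})}(n)\rangle$ against arbitrary targets. It maps $x_i\mapsto y_i$, the free generators of $\FW(n)$, and notes that the induced $\{\ra,1\}$-homomorphism $\alg{F}_{f(\mathcal{W})}(n)\to\FW(n)^\ra$ is already injective: an $\ra$-term $t$ with $t(y_1,\dots,y_n)=1$ gives $\mathcal{W}\models t=1$, hence $f(\mathcal{W})\models t=1$, and 1-regularity turns a trivial $1$-block into a trivial kernel. Lemma~\ref{lem:min-ext} then extends this to an embedding $\langle\alg{F}_{f(\mathcal{W})}(n)\rangle\hookrightarrow\FW(n)$, which is onto because its image contains $y_1,\dots,y_n$.

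So the paper applies Lemma~\ref{lem:min-ext} once, to a map that is an embedding from the outset. It needs none of your Steps~1--2 and no kernel factorisation. Your direct check of the universal property forces you to handle non-injective $h$, and that is what the identification $\langle\alg{A}/\varphi\rangle\cong\langle\alg{A}\rangle/\hat\varphi$ pays for. In return, your argument does not rely on comparing with $\FW(n)$. It also yields, as by-products, that free extensions commute with quotients and are generated by $X$.

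Two small points deserve a line more than you give them.
\begin{enumerate}
\item Lemma~\ref{lem:min-ext} only gives an abstract isomorphism $\Con{\alg{A}}\cong\Con{\langle\alg{A}\rangle}$, so $\hat\varphi\cap A^2=\varphi$ must be checked. It holds because $a\geq f_1\wedge\dots\wedge f_k$ in $\langle\alg{A}\rangle$ iff $f_1\ra(\cdots\ra(f_k\ra a)\cdots)=1$. That equation is evaluated inside $\alg{A}$, and $1/\varphi$ is closed under modus ponens.
\item The identification in Step~3 is not really ``the argument of Step~2''. It is the simpler observation that the embedding $\langle\alg{A}/\varphi\rangle\hookrightarrow\langle\alg{A}\rangle/\hat\varphi$ supplied by Lemma~\ref{lem:min-ext} has image containing a generating set, hence is onto.
\end{enumerate}
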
  

\begin{proof}
Let $\mathcal{W}\in\Lambda(\var{Br})$. Let $x_1,\dots,x_n$ be the free
generators of $\alg{F}_{f(\mathcal{W})}(n)$ and $y_1,\dots,y_n$ be the free
generators of $\FW(n)$. Since $\FW(n)$ belongs to
$\mathcal{W}$, we get that $\FW^\ra(n)\in f(\mathcal{W})$.
Consider the map
$\varphi\colon \{x_1,\dots,x_n\}\to \FW^\ra(n)$, given by
$\varphi(x_i) = y_i$. We claim that the unique homomorphism
$\overline{\varphi}\colon \alg{F}_{f(\mathcal{W})}(n)\to \FW^\ra(n)$ extending
$\varphi$ is an injection. To see it, take $t\in \mathrm{Term}_{\{\ra,1\}}(n)$
such that $\overline{\varphi}(t^{\alg{F}_{f(\mathcal{W})}(n)}(x_1,\dots,x_n)) = 1$.
Then, $t^{\FW(n)}(y_1,\dots,y_n) = 1$, and so
$\mathcal{W}\models t =1$. Hence $f(\mathcal{W})\models t =1$, showing
that $\overline{\varphi}$ is injective, that is, an embedding.
Hence, there exist a unique
$\overline{\overline{\varphi}}\colon
\langle \alg{F}_{f(\mathcal{W})}(n) \rangle \hookrightarrow \FW(n)$
such that $\overline{\overline{\varphi}}|_{\alg{F}_{f(\mathcal{W})}(n)} =
\overline{\varphi}$ and
$\langle \alg{F}_{f(\mathcal{W})}(n) \rangle \cong
\overline{\overline{\varphi}}(\langle \alg{F}_{f(\mathcal{W})}(n) \rangle)$.
Since $y_1,\dots, y_n\in \overline{\overline{\varphi}}(\langle
\alg{F}_{f(\mathcal{W})}(n) \rangle)$ we obtain
$\langle \alg{F}_{f(\mathcal{W})}(n)\rangle \cong
\FW(n)$. 
\end{proof}

Now, for any $\mathcal{V}\in\var{Hi}$ such that
$\mathcal{V} = f(\mathcal{W})$ for some $\mathcal{W}\in\var{Br}$,
we will write $\widehat{\mathcal{V}}$ for $f^{-1}(\mathcal{V})$. 
By Theorem~\ref{thm:same-con}, the posets $\Cm{\FV(n)}$
and $\Cm{\FVhat(n)}$ are isomorphic. To obtain an analogue
of Theorem~\ref{thm:free-Hilbert-alg} we must take the free extension
of $\FV(n)$.

\begin{theorem}[Free Brouwerian semilattices]\label{thm:free-Br-semi}
For any $n > 0$, we have
$$
\FVhat(n)\cong
\left\langle\bigcup_{i=1}^n\pw(S(x_i));\ra,\wedge,\emptyset\right\rangle 
\cong \alg{Acm}(\Cm{\FV(n)}),
$$
where the angle brackets denote the free extension of\/
$\FV(n)$.
\end{theorem}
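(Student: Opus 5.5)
The statement splits into two isomorphisms, and I will prove them in turn.

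For the first isomorphism, $\FVhat(n)\cong\langle\bigcup_{i=1}^n\pw(S(x_i));\ra,\wedge,\emptyset\rangle$, I combine two earlier results. By Theorem~\ref{thm:free-Hilbert-alg}, $\FV(n)\cong(\bigcup_{i=1}^n\pw(S(x_i));\ra,\emptyset)$. By Theorem~\ref{thm:same-con}, applied to $\mathcal{W}=\widehat{\mathcal{V}}$ with $f(\mathcal{W})=\mathcal{V}$, we have $\langle\FV(n)\rangle\cong\FVhat(n)$. The free extension is unique up to isomorphism by Lemma~\ref{lem:min-ext}, so $\langle\cdot\rangle$ respects isomorphisms of finite Hilbert algebras. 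Hence the first isomorphism follows directly. The only point to make explicit is that the free extension of the concrete algebra of antichains can be realised inside $\alg{Acm}(\Cm{\FV(n)})$. This realisation is what the notation with $\wedge$ is meant to suggest, and it is supplied by the second step.

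For the second isomorphism I take two routes, either of which suffices.

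\emph{Route (a).} Since $\FVhat(n)$ is a finite Brouwerian semilattice, Theorem~\ref{thm:mono}(3) gives $\FVhat(n)\cong\alg{Acm}(\Cm{\FVhat(n)})$. By Theorem~\ref{thm:same-con}, $\Cm{\FVhat(n)}\cong\Cm{\FV(n)}$ as posets. An order isomorphism of posets induces an isomorphism of the antichain algebras $\alg{Acm}$, because $\ra$ and $\wedge$ in Definition~\ref{def:ant-order} are defined purely order-theoretically. This yields $\FVhat(n)\cong\alg{Acm}(\Cm{\FV(n)})$.

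\emph{Route (b).} This route ties the two sides concretely. The embedding $S(\FV(n))\leq\alg{Ac}(\Cm{\FV(n)})=\alg{Acm}(\Cm{\FV(n)})^\ra$ extends, by the universal property in Lemma~\ref{lem:min-ext}, to an embedding $\langle S(\FV(n))\rangle\hookrightarrow\alg{Acm}(\Cm{\FV(n)})$. It remains to show surjectivity. Every singleton antichain $\{\mu\}$ lies in $S(\FV(n))$ by Theorem~\ref{thm:repres}\eqref{a}. Every antichain $C$ is the meet $\bigwedge_{\mu\in C}\{\mu\}=\max C=C$ in $\alg{Acm}$. So the image is closed under $\wedge$ and contains all singletons, hence it is everything. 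This route also identifies the free extension as the $\wedge$-closure of the selected antichains, as claimed in the outline.

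The main obstacle is the bookkeeping of which poset is involved. Theorem~\ref{thm:same-con} gives the poset isomorphism only abstractly. So I must check, via the isomorphism $\Con{\FV(n)}\cong\Con{\langle\FV(n)\rangle}$ of Lemma~\ref{lem:min-ext}, that the map $S$ computed in $\FV(n)$ agrees with the one computed in $\FVhat(n)$ on the generators. That is, I need $\mu\vee\theta(1,x)=\mu^+$ in one congruence lattice if and only if it holds in the other. I expect this to follow because the congruence isomorphism is given by restriction, as for EDPC/CEP in the proof of Theorem~\ref{thm:repr-fin-Hi}, and restriction preserves the principal congruences $\theta(1,x)$ for $x$ in the subalgebra.
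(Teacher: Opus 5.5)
Your proof is correct. The first step together with Route (a) is exactly the argument the paper leaves implicit: Theorem~\ref{thm:same-con} and Theorem~\ref{thm:free-Hilbert-alg} give the first isomorphism, and Theorem~\ref{thm:mono}(3) with the poset isomorphism $\Cm{\FVhat(n)}\cong\Cm{\FV(n)}$ gives the second. Route (b) is a valid extra: it realises the free extension concretely as the $\wedge$-closure of $S(\FV(n))$ inside $\alg{Acm}(\Cm{\FV(n)})$.

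The ``obstacle'' in your last paragraph is not needed. In the statement, $S$ is computed only in $\FV(n)$, and neither of your routes compares it with the map $S$ of $\FVhat(n)$.
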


\section{Adding zero}\label{sec:zero}
Given an arbitrary
variety $\mathcal{V}\subseteq\var{Hi}$, or $\subseteq\var{Br}$,
we can expand the signature by a constant
$0$, and form $\mathcal{V}_0$ by adding the identity $0\ra x = 1$ to the base of
$\mathcal{V}$. We will denote $\var{Hi}_0$ by $\var{Ho}$ and
$\var{Br}_0$ by $\var{Bo}$ to avoid proliferation of subscripts later.  
We will refer to the algebras from $\var{Ho}$ and $\var{Bo}$ 
as \emph{Hilbert algebras with zero} and
\emph{Brouwerian semilattices with zero}, respectively. 

Let us fix an arbitrary variety
$\mathcal{V}_0$ of Hilbert algebras with zero. We will adapt our
construction of free algebras to suit $\mathcal{V}_0$.
The first modification consists in replacing Lemma~\ref{lem:P1} by the
following. 

\begin{lemma}\label{lem:P1-with-0}
For any $J\subseteq X$ let
$f_J\colon X\to\{0,1\}$ be the map given by
$$
f_J(x) =\begin{cases}
  1 & \text{ if } x\in J,\\
  0 & \text{ if } x\notin J,
\end{cases}
$$
where $\{0,1\}$ is the universe of $\alg{2}$.
Let $\ov{f}_J\colon \FVo(n)\twoheadrightarrow \mathbf{2}$
be the homomorphism uniquely extending $f_J$, and let
$\mu_J = \ker \ov{f}_J$. Then
$P_1(n) = \{\mu_J: J\subseteq X\}$.
\end{lemma}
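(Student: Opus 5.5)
We follow the proof of Lemma~\ref{lem:P1}, changing only what the constant $0$ forces. Here $\alg{2}$ is read as a $\{\ra,0,1\}$-algebra with $0$ interpreted as its bottom element. Since $0\ra x = 1$ holds in $\alg{2}$, and $\alg{2}$ is a subalgebra of every nontrivial member of $\mathcal{V}_0$ (it is $\{0,1\}$ inside any nontrivial algebra with zero), we have $\alg{2}\in\mathcal{V}_0$. Hence every map $f_J\colon X\to\{0,1\}$ extends uniquely to a homomorphism $\ov{f}_J\colon\FVo(n)\to\alg{2}$.

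First we show that $\ov{f}_J$ is onto for every $J\subseteq X$, including $J = X$. This is the only place where the argument differs from Lemma~\ref{lem:P1}. The image of $\ov{f}_J$ always contains both $1$ and $0$, because $0$ is now a constant of the signature. So the quotient $\FVo(n)/\mu_J\cong\alg{2}$ is never trivial, and we no longer need $J\neq X$.

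Next we show that $P_1(n)$ consists exactly of these kernels. A congruence $\mu$ has height $1$ iff $\mu$ is a coatom of $\Con{\FVo(n)}$ that is completely meet-irreducible. By Theorem~\ref{thm:Hi-si} the quotient $\FVo(n)/\mu$ is then subdirectly irreducible of the form $\alg{A}^\oplus$ with $h=1$. That means its only proper nontrivial quotient structure is trivial, so $\alg{A}$ is trivial and $\FVo(n)/\mu\cong\alg{2}$. We should check that the zero behaves well here: in $\alg{A}^\oplus$ with $\alg{A}$ trivial, the constant $0$ must equal $\op$, since it lies below everything and $\op\ne 1$ is forced.

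Then every $\mu\in P_1(n)$ is the kernel of some surjection $\FVo(n)\onto\alg{2}$. Such a surjection is determined by its restriction to $X$, which is $f_J$ with $J = X\cap 1/\mu$. Conversely, each $\mu_J$ has quotient $\alg{2}$, whose congruence lattice is a two-element chain. So $\mu_J$ is a coatom, hence completely meet-irreducible of height $1$.

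Finally, injectivity of $J\mapsto\mu_J$ holds because $J = X\cap 1/\mu_J$; this is not needed for the equality but records that $|P_1(n)| = 2^n$.

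The main subtlety is making sure that identifying $P_1(n)$ with the congruences whose quotient is $\alg{2}$ still holds in the signature with $0$. This uses the Hilbert-reduct fact that the congruences of $\FVo(n)$ coincide with those of its $\{\ra,1\}$-reduct, which holds because $0$ is a constant. Everything else is bookkeeping.
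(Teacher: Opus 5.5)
Your proof is correct and follows essentially the paper's argument. You identify $P_1(n)$ with the kernels of homomorphisms onto $\alg{2}$, note that these are determined by maps $X\to\{0,1\}$, and observe that the constant $0$ makes every such homomorphism surjective, so $J=X$ is no longer excluded. You also supply details the paper treats as ``clear'': that height-one congruences are exactly the coatoms, whose quotients are $\alg{2}$; that $\alg{2}\in\mathcal{V}_0$; and that adding a constant does not change congruences.
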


\begin{proof}
Clearly, $\mu\in P_1(n)$ if and only if
$\FVo(n)/\mu \cong \alg{2}$. It is also clear that
for $\mu,\nu\in P_1(n)$ we have $\mu\neq\nu$ if and only if
$\mu$ and $\nu$ are kernels of homomorphisms
extending, respectively, maps $f_\mu\colon X\to \{0,1\}$ and
$f_\nu\colon X\to \{0,1\}$. Since $0$ is in the signature
$\FVo(n)/\mu$ is nontrivial even if  
$f_\mu$ is not surjective, and similarly for $\FVo(n)/\nu$ and $f_\nu$. Hence
$J$ can be any subset of $X$. 
\end{proof}

As a consequence of Lemma~\ref{lem:P1-with-0}, the first layer
$P_1(n)$ of $\Cm{\FVo(n)}$ will have one element more than
$P_1(n)$ of $\Cm{\FV(n)}$, namely the element corresponding to
$X$, which in turn corresponds to the homomorphism from $\FVo(n)$
onto $\alg{2}$ sending all generators to $1$. Such a homomorphism from
$\FV(n)$ clearly does not exist. Hence $|P_1(n)| = 2^n-1$ 
in $\Cm\FV(n)$, but $|P_1(n)| = 2^n$ in $\FVo(n)$. 
Then Lemma~\ref{lem:layers} applies without
changes to $\Cm{\FVo(n)}$, 
except that now we will have one more layer, so the number of layers will be
$n+1$ instead of $n$. Intuitively, subdirectly irreducible algebras can
grow one level taller, by adding $0$.

The second modification needed to obtain an analogue of
Theorem~\ref{thm:free-Hilbert-alg} is to  
add $S(0)$, since $0$ is not generated by any term other than itself. That is,
according to the definition of $S$, we get
$S(0) = \{\mu\in\cm{\FVo(n)}: \mu^+ = 1_{\FVo(n)}\} =  P_1(n)$,
  unlike in $\Cm{\FV(n)}$ where we have $S(t)
\neq P_1(n)$ for all $t\in \mathrm{Term}_{\ra}$. The next theorem is
proved exactly as Theorem~\ref{thm:free-Hilbert-alg}.

\begin{theorem}[Free Hilbert algebras with zero]\label{thm:free-Hi0-alg}
Let $\mathcal{V}_0\subseteq \var{Ho}$ be nontrivial.  
The algebra $\FVo(0)$ is term-equivalent to the two-element Boolean algebra. 
For any $n > 0$, we have
$$
\FVo(n)\cong
\left(\bigcup_{i=1}^{n}\pw(S(x_i))\cup \pw(S(0));\ra,\emptyset,P_1(n)\right)
\leq \alg{Ac}(\Cm{\FVo(n)}).
$$
\end{theorem}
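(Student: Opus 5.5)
The plan follows the proof of Theorem~\ref{thm:free-Hilbert-alg}, with two extra ingredients: the handling of $0$, and the case $n=0$.

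\emph{Case $n=0$.} $\FVo(0)$ is generated by $0$ alone, so its universe consists of the values of $\{\ra,0,1\}$-terms without variables. Using (H1), (H2) and $0\ra x=1$, every such term reduces to $0$ or $1$; for instance $0\ra 0=1$, $1\ra 0=0$ and $0\ra 1=1$. Since $\mathcal{V}_0$ is nontrivial, the homomorphism onto $\alg{2}$ with $0\mapsto 0$ separates $0$ from $1$, so $\FVo(0)$ has exactly two elements. On a two-element set, $\ra$ together with $0$ and $1$ defines the Boolean operations: $\neg x = x\ra 0$, $x\vee y=(x\ra y)\ra y$, and $x\wedge y=\neg(x\ra\neg y)$. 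This gives the term-equivalence with the two-element Boolean algebra.

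\emph{Case $n>0$.} By Theorem~\ref{thm:mono}, which applies because $\mathcal{V}_0$-algebras are Hilbert algebras after forgetting $0$, the map $S$ is an injective $\ra$-homomorphism into $\alg{Ac}(\Cm{\FVo(n)})$. Congruences of $\FVo(n)$ coincide with congruences of its $\{\ra,1\}$-reduct, since $0$ is a constant; so the representation is the same. We also have $S(1)=\emptyset$, and $S(0)=P_1(n)$ by the computation preceding the statement: $0/\mu=\op_\mu$ exactly when $\mu^+$ is the total congruence, that is, when $\mu\in P_1(n)$. It therefore remains to show
\[
S(\FVo(n))=\bigcup_{i=1}^n\pw(S(x_i))\cup\pw(S(0)).
\]
Right to left: every subset of $S(x_i)$ or of $S(0)$ lies in $S(\FVo(n))$ by Theorem~\ref{thm:transit}. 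That theorem is stated for $\var{Hi}$, but its proof only evaluates iterated implications $a_{k+1}\ra(\dots\ra(a_n\ra a))$ with irreducible $a_j$. Hence it applies verbatim to $a=0$, with $a$ and the $a_j$ taken in $\FVo(n)$.

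Left to right: induct on a term $t\in\mathrm{Term}_{\{\ra,0,1\}}(n)$. The base cases are $t=x_i$, $t=0$, and $t=1$, the last giving $\emptyset\subseteq S(x_1)$. For $t=t_1\ra t_2$ we have $S(t)=S(t_1)\ra S(t_2)\subseteq S(t_2)$ by the definition of $\ra$ on antichains, and then the induction hypothesis applies to $t_2$.

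I expect the only real point needing care to be the transfer of Theorems~\ref{thm:mono} and~\ref{thm:transit} to the expanded signature. This amounts to the observation that $\mathrm{Con}$ and the irreducibles are unchanged by adding the constant $0$. Once that is accepted, the rest is the same induction as before.
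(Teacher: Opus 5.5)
Your proposal is correct and follows the paper's own route: the paper says only that this theorem is proved exactly as Theorem~\ref{thm:free-Hilbert-alg}. That means Theorem~\ref{thm:transit} for the right-to-left inclusion (now also applied with $a=0$), induction on terms for the left-to-right inclusion, and Theorem~\ref{thm:mono} applied to the $\{\ra,1\}$-reduct, with $S(0)=P_1(n)$ accounting for the constant. Your treatment of $n=0$ fills in a step the paper leaves implicit, and its one tacit point, that $\alg{2}\in\mathcal{V}_0$, holds because $0=1$ would force $x=1\ra x=0\ra x=1$ for every $x$.
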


Now we turn to Brouwerian semilattices with zero. It is evident from the proofs
that the analogues of  Lemmas~\ref{lem:min-ext}, and~\ref{lem:f-charact}, as well as 
of Theorem~\ref{thm:subvariety-isom} hold for
$\Lambda(\var{Bo})$ and $\Lambda(\var{Ho})$.
Adding $0$ to the signature does not change Theorem~\ref{thm:same-con} either.
We keep using the notation $\widehat{\mathcal{V}_0}$ analogously.

\begin{theorem}[Free Brouwerian semilattices with zero]\label{thm:free-Br-semi-0}
For any $n > 0$, we have
$$
\FVohat(n) \cong
\left\langle\bigcup_{i=1}^{n}\pw(S(x_i))\cup
  \pw(S(0));\ra,\emptyset,P_1(n)\right\rangle \cong \alg{Acm}(\Cm{\FVo}(n)),
$$
where the angle brackets denote the free extension of\/
$\FVo(n)$.
\end{theorem}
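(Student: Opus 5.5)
We prove Theorem~\ref{thm:free-Br-semi-0} by transporting the argument for Theorem~\ref{thm:free-Br-semi} to signatures with $0$. The plan has three steps: identify $\FVohat(n)$ with the free extension of $\FVo(n)$, identify $\FVo(n)$ with the displayed algebra of antichains, and show the free extension of that algebra is all of $\alg{Acm}(\Cm{\FVo(n)})$.

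\textbf{Step 1.} Write $\mathcal{W}=\widehat{\mathcal{V}_0}$, so that $f(\mathcal{W})=\mathcal{V}_0$. We use the zero-analogue of Theorem~\ref{thm:same-con}, which the text asserts. Its proof carries over verbatim:
\begin{itemize}
\item the map $x_i\mapsto y_i$ extends to a $\{\ra,0,1\}$-homomorphism $\alg{F}_{\mathcal{V}_0}(n)\to\FW(n)^{\ra}$;
\item injectivity follows because an identity $t=1$ in $\mathrm{Term}_{\{\ra,0,1\}}(n)$ holding in $\mathcal{W}$ also holds in $f(\mathcal{W})$;
\item the universal property of the free extension (the $0$-analogue of Lemma~\ref{lem:min-ext}) gives an embedding $\langle\FVo(n)\rangle\hookrightarrow\FW(n)$ whose image contains the generators, hence is onto.
\end{itemize}
This yields $\FVohat(n)\cong\langle\FVo(n)\rangle$. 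It also gives $\Con{\FVohat(n)}\cong\Con{\FVo(n)}$, and therefore $\Cm{\FVohat(n)}\cong\Cm{\FVo(n)}$.

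\textbf{Step 2.} By Theorem~\ref{thm:free-Hi0-alg}, $\FVo(n)$ is isomorphic to the displayed algebra of antichains. Taking free extensions of isomorphic algebras, which is legitimate by uniqueness in Lemma~\ref{lem:min-ext}, gives the first isomorphism of the statement.

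\textbf{Step 3: the second isomorphism.} $\FVohat(n)$ is a finite Brouwerian semilattice (with zero), so Theorem~\ref{thm:mono}(3) gives $\FVohat(n)\cong\alg{Acm}(\Cm{\FVohat(n)})$ via $S$. Composing with the poset isomorphism $\Cm{\FVohat(n)}\cong\Cm{\FVo(n)}$ from Step~1 gives $\alg{Acm}(\Cm{\FVo(n)})$. The constant $0$ is sent to $S(0)=P_1(n)$, the set of maximal elements, so the constant is matched as well.

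\textbf{Compatibility check.} We must verify that the isomorphisms agree on the Hilbert part, i.e.\ that the embedding $\FVo(n)\hookrightarrow\alg{Acm}(\Cm{\FVo(n)})$ induced through $\langle\cdot\rangle$ is the map $S$ of Theorem~\ref{thm:free-Hi0-alg}. This holds because, by Lemma~\ref{lem:min-ext}, the congruences of $\FVo(n)$ and of its free extension correspond by restriction. Under this correspondence $\mu\vee\theta(1,a)=\mu^+$ is computed identically in both, so $S$ on the subreduct is the restriction of $S$ on the free extension.

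\textbf{Main obstacle.} The delicate point is the zero-analogues of Lemma~\ref{lem:min-ext} and Theorem~\ref{thm:mono}(3), namely that $0$ is preserved. For the free extension, $0$ remains the bottom and $0\ra x=1$ persists, since the semilattice generated contains $0$ as its least element. For Theorem~\ref{thm:mono}(3), $S(0)$ is the antichain of all maximal elements of $\Cm{}$: it corresponds under $\varphi$ of Lemma~\ref{lem:antichain-alg} to the empty upset, the bottom of $\mathrm{Up}$. This is exactly the least element in $\preccurlyeq$, so $0$ is preserved.
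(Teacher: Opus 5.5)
Your argument is correct and is essentially the paper's own (implicit) proof: the zero-analogue of Theorem~\ref{thm:same-con} gives $\FVohat(n)\cong\langle\FVo(n)\rangle$, Theorem~\ref{thm:free-Hi0-alg} supplies the antichain description of $\FVo(n)$, and Theorem~\ref{thm:mono}(3) together with $\Cm{\FVohat(n)}\cong\Cm{\FVo(n)}$ gives the $\alg{Acm}$ form. Your explicit check that the constant $0$ goes to $S(0)=P_1(n)$, the $\preccurlyeq$-least antichain, fills in the one detail the paper leaves tacit; the compatibility check is harmless but not needed for the isomorphisms as stated.
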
  

Next we observe that free algebras in varieties with zero are isomorphic to
certain special quotients of free algebras in varieties without zero, but with
one more generator. The result is the same for Hilbert algebras and
Brouwerian semilattices.  

\begin{theorem}\label{thm:quotient}
Let $\mathcal{V}\subseteq\var{Hi}$ or $\mathcal{V}\subseteq\var{Br}$.
For any $n\geq 1$  the following holds
$$  
\FVo(n)\cong \FV(n+1)/\vartheta
$$
where $\vartheta = \bigvee_{i=1}^n\theta(x_{n+1}\ra x_i, 1)$.
\end{theorem}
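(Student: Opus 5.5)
The plan is to construct mutually inverse homomorphisms using the universal properties of both free algebras. We work in the signature of $\mathcal{V}_0$, which is $\mathcal{V}$ expanded by $0$ with the axiom $0\ra x = 1$. Write $y_1,\dots,y_n$ for the free generators of $\FVo(n)$.

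First, the reduct of $\FVo(n)$ to the signature of $\mathcal{V}$ lies in $\mathcal{V}$. So the assignment $x_i\mapsto y_i$ for $i\le n$ and $x_{n+1}\mapsto 0$ extends to a homomorphism $\psi\colon \FV(n+1)\to\FVo(n)$. This $\psi$ is surjective, because its image contains $y_1,\dots,y_n$ and $0=\psi(x_{n+1})$, and it is closed under $\ra$ and $1$. Moreover $\psi(x_{n+1}\ra x_i)=0\ra y_i=1$, so each generating pair $(x_{n+1}\ra x_i,1)$ lies in $\ker\psi$. Hence $\vartheta\subseteq\ker\psi$, and $\psi$ factors through a surjection $\bar\psi\colon\FV(n+1)/\vartheta\to\FVo(n)$.

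Second, we turn $\alg{B}=\FV(n+1)/\vartheta$ into a $\mathcal{V}_0$-algebra by interpreting $0$ as $x_{n+1}/\vartheta$. Its $\mathcal{V}$-reduct lies in $\mathcal{V}$, since $\mathcal{V}$ is closed under $H$. The key step, and the main obstacle, is to show that $0\ra b=1$ for every $b\in B$, not just for the generators. We argue by induction on terms in $x_1,\dots,x_{n+1}$.
\begin{itemize}
\item For $b=x_i/\vartheta$ with $i\le n$, this holds by the definition of $\vartheta$.
\item For $b=x_{n+1}/\vartheta$ and $b=1$, it follows from (H1) and (H5).
\item For $b=s\ra t$, (H3) gives $0\ra(s\ra t)=(0\ra s)\ra(0\ra t)=1\ra 1=1$ by the inductive hypothesis.
\item In the Brouwerian case, for $b=s\wedge t$, (B5) gives $0\ra(s\wedge t)=(0\ra s)\wedge(0\ra t)=1$.
\end{itemize}
Hence $(\alg{B},0)\in\mathcal{V}_0$.

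Third, the universal property of $\FVo(n)$ gives a homomorphism $\chi\colon\FVo(n)\to(\alg{B},0)$ with $y_i\mapsto x_i/\vartheta$. Being a $\mathcal{V}_0$-homomorphism, $\chi$ sends $0$ to $x_{n+1}/\vartheta$. The composite $\bar\psi\circ\chi$ fixes each $y_i$ and preserves all operations, so it is the identity on $\FVo(n)$. Likewise $\chi\circ\bar\psi$ fixes each $x_i/\vartheta$ for $i\le n+1$, and these elements generate $\alg{B}$, so it is the identity on $\alg{B}$. Therefore $\bar\psi$ is an isomorphism, which gives $\FVo(n)\cong\FV(n+1)/\vartheta$.

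Two small points remain. One is to check that $\vartheta$ is a proper congruence; this is automatic from the isomorphism, since $\FVo(n)$ is nontrivial whenever $\mathcal{V}$ is. The other is the Brouwerian case, which is handled by the same argument as the Hilbert case, using (B5) in the inductive step as above.
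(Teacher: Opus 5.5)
Your proof is correct. Its first two steps match the paper's: the surjection $\FV(n+1)\to\FVo(n)$ sending $x_{n+1}\mapsto 0$ shows $\vartheta\subseteq\ker\bar\psi$, and the induction on terms shows that $x_{n+1}/\vartheta$ is the least element, so the quotient with this element as $0$ lies in $\mathcal{V}_0$. You differ only in how you finish.

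The paper concludes by counting. Since the quotient is an $n$-generated $\mathcal{V}_0$-algebra, it is an image of $\FVo(n)$, which gives $|\FV(n+1)/\vartheta|\le|\FVo(n)|$. The inclusion $\vartheta\subseteq\ker\bar\psi$ gives $|\FV(n+1)/\vartheta|\ge|\FVo(n)|$. Equal finite sizes together with $\vartheta\subseteq\ker\bar\psi$ then force $\vartheta=\ker\bar\psi$. That step depends on the free algebras being finite, i.e.\ on local finiteness of subvarieties of $\var{Hi}$ and $\var{Br}$.

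You instead build the inverse $\chi$ from the universal property of $\FVo(n)$ and check that both composites fix generators. This is a purely universal-algebraic argument that needs no finiteness, so it would work unchanged for infinitely many generators.

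One small detail in your write-up should be stated explicitly: $\bar\psi(x_{n+1}/\vartheta)=0$. This is what makes $\bar\psi\circ\chi$ preserve $0$. Preserving $0$ is needed to conclude that it is the identity, because $y_1,\dots,y_n$ generate $\FVo(n)$ only in the signature that includes $0$.
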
  

\begin{proof}
Let $X = \{x_i: i\leq n+1\}$. Consider the map $f\colon X\to \FVo(n)$ given
by $f(x_i) = x_i$ for $1\leq i\leq n$ and $f(x_{n+1}) = 0$. Then
$f$ extends to an onto homomorphism $\ov{f}: \FV(n+1)\to \FVo(n)$, such
that $\ov{f}(x_{n+1}\ra x_i) =1$ for any $i\leq n$. Hence,
$\vartheta\subseteq \ker{\ov{f}}$. 
To show the converse, let us inspect the algebra $\FV(n+1)/\vartheta$.
It is easy to show by induction on complexity of
$t\in\mathrm{Term}_{\ra}(n+1)$ or $t\in\mathrm{Term}_{\{\wedge,\ra\}}(n+1)$
that $x_{n+1}/\vartheta\ra t/\vartheta = 1/\vartheta$. It follows that
$x_{n+1}/\vartheta$ is the smallest element in $\FV(n+1)/\vartheta$.
To lighten the notation, put $\alg{A} = \FV(n+1)/\vartheta$,
$a_i = x_i/\vartheta$ for $i\leq n$ and 
$0^\alg{A} = x_{n+1}/\vartheta$. Then $\alg{A}$, in the signature expanded by
$0$, belongs to $\mathcal{V}_0$ and it is generated by $a_1,\dots,a_n$, so
$\alg{A}$ is a homomorphic image of $\FVo(n)$. Hence,
$|A|\leq |\mathit{F}_{\mathcal{V}_0}(n)|$. On the other hand, 
$|A|\geq |\mathit{F}_{\mathcal{V}}(n+1)/\ker{\ov{f}}| =
|\mathit{F}_{\mathcal{V}_0}(n)|$, and since 
$\alg{A} = \FV(n+1)/\vartheta$ we have
$|\mathit{F}_{\mathcal{V}}(n+1)/\vartheta| =
|\mathit{F}_{\mathcal{V}}(n+1)/\ker{\ov{f}}|$ and therefore
$\vartheta = \ker{\ov{f}}$.     
\end{proof}  

Theorem~\ref{thm:quotient} implies, via the constructions of
$\Cm{\FVo(n)}$ and $\Cm{\FV(n+1)}$ that
for any $n$ we have the following interlacing of posets:
$$
\Cm{\FV(n)}\mathbin{\up{\subseteq}} \Cm{\FVo(n)}
\mathbin{\up{\subseteq}} \Cm{\FV(n+1)}
$$
where $\mathbin{\up{\subseteq}}$ stands for `is contained as an upset in'. 
See Figures~\ref{fig:P3(3)} and~\ref{fig:Po3(2)} for an instance of this
with $n = 2$.

\section{Examples and applications}\label{sec:ex-appl}

We start with examples in pictures. We ask the reader to
compare the first two examples to~\cite{Koh81}; especially our Figure~\ref{fig:P3(3)}
to Figure~3 there. 

\begin{example}\label{ex:free-Hi2}
The two-generated free Hilbert algebra $\alg{F}_{\var{Hi}}(2)$ and the two-generated free
Brouwerian semilattice $\alg{F}_{\var{Br}}(2)$ are shown in Figure~\ref{fig:P2(2)}. 
\begin{figure}[h]
\begin{center}
\begin{tikzpicture}
\filldraw[black!15!white] (-9,2.7) rectangle (-4,3.4);
\filldraw[black!15!white] (-9,1.7) rectangle (-4,2.4);

\node[bdot] (u0) at (-8,2) {};
\node[sdot] (u1) at (-8,3) {};
\node[sdot] (u2) at (-7,2) {};
\node[bdot] (u3) at (-7,3) {};
\node[bdot] (u4) at (-6,3) {};
\node[sdot] at (u4) {};
\node[left] at (u1) {$\{x\}$};
\node[left] at (u3) {$\{y\}$};
\node (u5) at (-5,3) {$P_1(2)$};
\node (u6) at (-5,2) {$P_2(2)$};

\path
(u0) edge (u1) (u2) edge (u3);

\node[dot] (v0) at (0,0) {};
\node[bdot] (v1) at (1,1) {};
\node[right] at (v1) {$y$};
\node[bdot] (v2) at (2,2) {};
\node[bdot] (v3) at (-1,1) {};
\node[left] at (v3) {$x$};
\node[dot] (v4) at (0,2) {};
\node[bdot] (v5) at (1,3) {};
\node[bdot] (v6) at (-2,2) {};
\node[bdot] (v7) at (-1,3) {};
\node[bdot] (v8) at (0,4) {};

\node[dot] (v9) at (0,1) {};
\node[bdot] (v10) at (1,2) {};
\node[bdot] (v11) at (2,3) {};
\node[bdot] (v12) at (-1,2) {};
\node[dot] (v13) at (0,3) {};
\node[bdot] (v14) at (1,4) {};
\node[bdot] (v15) at (-2,3) {};
\node[bdot] (v16) at (-1,4) {};
\node[bdot] (v17) at (0,5) {};

\path
(v0) edge (v1) (v1) edge (v2)
(v3) edge (v4) (v4) edge (v5)
(v6) edge (v7) (v7) edge (v8)
(v0) edge (v3) (v3) edge (v6)
(v1) edge (v4) (v4) edge (v7)
(v2) edge (v5) (v5) edge (v8)

(v9) edge (v10) (v10) edge (v11)
(v12) edge (v13) (v13) edge (v14)
(v15) edge (v16) (v16) edge (v17)
(v9) edge (v12) (v12) edge (v15)
(v10) edge (v13) (v13) edge (v16)
(v11) edge (v14) (v14) edge (v17)

(v0) edge (v9)
(v1) edge (v10)
(v2) edge (v11)
(v3) edge (v12)
(v4) edge (v13)
(v5) edge (v14)
(v6) edge (v15)
(v7) edge (v16)
(v8) edge (v17)
;
\end{tikzpicture}  
\end{center}
\caption{Left: Hasse diagram of $P_1(2)\cup P_2(2)$.
Solid dots indicate $S(x)$, square dots $S(y)$; points
without labels are labelled by $\emptyset$.
Right: Hasse diagrams of $\alg{F}_{\var{Br}}(2)$ (all dots), and of $\alg{F}_{\var{Hi}}(2)$ (solid dots).}
\label{fig:P2(2)}   
\end{figure}
We have $|\Cm{\alg{F}_{\var{Hi}}(2)}| = |\Cm{\alg{F}_{\var{Br}}(2)}| = 5$, but
$|\alg{F}_{\var{Hi}}(2)| = 14$  and $|\alg{F}_{\var{Br}}(2)| = 18$, both numbers well known.
\end{example}

\begin{example}
The poset $\Cm{\alg{F}_{\var{Hi}}(3)}$ of the three-generated free Hilbert algebra, 
identical to the poset $\Cm{\alg{F}_{\var{Br}}(3)}$ of the three-generated free Brouwerian
semilattice is shown in Figure~\ref{fig:P3(3)} using a concentric Hasse diagram,
where the upward direction is towards the centre.  
\begin{figure}
\begin{center}
\begin{tikzpicture}[scale=0.8]
\filldraw[black!15!white] (0,0) circle [radius=6.4];
\filldraw[white] (0,0) circle [radius=5.5]; 
  
\filldraw[black!15!white] (0,0) circle [radius=4.6];
\filldraw[white] (0,0) circle [radius=3.4]; 

\filldraw[black!15!white] (0,0) circle [radius=2.3];
\filldraw[white] (0,0) circle [radius=1.1]; 

\foreach \x in {1,2}
\node[dot] (v\x) at (\x*360/7:2) {};
\foreach \x in {3,4,6}
\node[dot] (v\x) at (\x*360/7:2) {};
\node[bdot] (v5) at (5*360/7:1.4) {};
\node[dot] (v7) at (7*360/7:2) {};
\node[right] at (v4) {$\{x\}$};
\node[right] at (v3) {$\{x,y\}$};
\node[bdot] (v2) at (2*360/7:2) {};
\node[below] at (v2) {$\{y\}$};
\node[bdot] (v1) at (1*360/7:2) {};
\node[left] at (v1.south) {$\{y,z\}$};
\node[bdot] (v7) at (7*360/7:2) {};
\node[left] at (v7) {$\{z\}$};
\node[above] at (v6.west) {$\{x,z\}$};

\foreach \x in {1,2,4,5,6,8,9,10,11,12,14,15,16,29,30,31,33,34,35}
\node[dot] (u\x) at (\x*360/35:4) {};
\foreach \x in {19,20,21,22,23}
\node[bdot] (u\x) at (\x*360/35:4) {};

\node[above] at (u4) {$\{z\}$};
\node[left] at (u6) {$\{y\}$};

\node[right] at (u14.north) {$\{y\}$};
\node[bdot] (u14) at (14*360/35:4) {};
\node[bdot] (u15) at (15*360/35:4) {};
\node[left] at (u16.north) {$\{x\}$};

\node[right] at (u29.south) {$\{x\}$};
\node[bdot] (u30) at (30*360/35:4) {};
\node[bdot] (u31) at (31*360/35:4) {};
\node[below] at (u31.west) {$\{z\}$};

\foreach \x in {1,...,4}
\node[dot] (w\x) at (\x*360/40:6) {};
\foreach \x in {7,...,16}
\node[dot] (w\x) at (\x*360/40:6) {};
\foreach \x in {20,...,27}
\node[bdot] (w\x) at (\x*360/40:6) {};
\node[bdot] (w29) at (29*360/40:6) {};
\node[bdot] (w31) at (31*360/40:6) {};
\foreach \x in {35,...,40}
\node[dot] (w\x) at (\x*360/40:6) {};

\path
(v2) edge (u12) (v2) edge (u10) (v2) edge (u11)
(v3) edge (u12)
(v3) edge (u14) (v3) edge (u15) (v3) edge (u16)
(v3) edge (u9) (v1) edge (u9)
(v3) edge (u10) (v1) edge (u10)
(v2) edge (u8) (v1) edge (u8)
(v3) edge (u19) (v4) edge (u19)
(v4) edge (u20) (v4) edge (u21)
(v3) edge (u21) (v6) edge (u21)
(v3) edge (u22) (v6) edge (u22)
(v4) edge (u23) (v6) edge (u23)
(v7) edge (u2) (v1) edge (u2)
(v6) edge (u35) (v7) edge (u35) (v1) edge (u35)
(v1) edge (u4) (v6) edge (u34)
(v1) edge (u6) (v1) edge (u5) (v1) edge (u34)
(v7) edge (u33) (v6) edge (u33)
(v7) edge (u1)
(v6) edge (u30) (v6) edge (u31) (v6) edge (u29)
;

\path
(u31) edge (w35)
(u31) edge (w36) (w36) edge (v7)
(u31) edge (w37) (w37) edge[bend right=15] (v7) (w37) edge[bend right=15] (u4)
(u31) edge (w38) (w38) edge[bend right=5] (v1)
(u31) edge (w39) (w39) edge (u4)
(u31) edge (w40) (w40) edge (u4) (w40) edge[bend right=10] (v1)
(v6) edge[bend right=5] (w1) (w1) edge (u4)
(v6) edge (w2) (w2) edge (u4) (w2) edge (v7)
(v7) edge[bend right=10] (w3) (w3) edge (u4)
(u4) edge (w4)

(u6) edge (w7)
(u6) edge (w8) (w8) edge (v2)
(u6) edge (w9) (w9) edge (v2) (w9) edge[bend right=15] (u14)
(u6) edge (w10) (w10) edge[bend right=15] (v3)
(u6) edge (w11) (w11) edge (u14)
(u6) edge (w12) (w12) edge (u14) (w12) edge (v3)
(v1) edge[bend left=5] (w13) (w13) edge (u14)
(v1) edge[bend left=5] (w14) (w14) edge (u14) (w14) edge (v2)
(v2) edge (w15) (w15) edge (u14)
(u14) edge (w16)

(u16) edge (w20)
(u16) edge (w21) (w21) edge[bend left=5] (v4)
(u16) edge (w22) (w22) edge (v4) (w22) edge[bend right] (u29)
(u16) edge (w23) (w23) edge[bend right=7] (v6)
(u16) edge (w24) (w24) edge[bend right] (u29)
(u16) edge (w25) (w25) edge[bend right] (u29) (w25) edge[bend right=15] (v6)
(v3) edge[bend right=5] (w26) (w26) edge[bend right] (u29)
(v3) edge (w27) (w27) edge[bend right] (u29) (w27) edge[bend right=15] (v4)
(v4) edge (w29) (w29) edge[bend right=15] (u29)
(u29) edge (w31)
;
\end{tikzpicture}
\end{center}
\caption{Concentric Hasse diagram of $\Cm{\alg{F}_{\var{Hi}}}(3)$. 
Shaded bands mark $P_1(3)$, $P_2(3)$ and $P_3(3)$. Solid dots
indicate $S(x)$. Points
without labels are labelled by $\emptyset$.}
\label{fig:P3(3)}    
\end{figure}
Here we have $|\Cm{\alg{F}_{\var{Hi}}}(3)| = 61 = |\Cm{\alg{F}_{\var{Br}}}(3)|$. It is known that
$|\alg{F}_{\var{Br}}(3)| = 623,662,965,552,330$ \textup(cf.~\cite{LW76} or~\cite{Krz77}\textup).
Using the inclusion-exclusion principle, we can calculate
$|\alg{F}_{\var{Hi}}(3)| = |\pw(S(x))\cup\pw(S(y))\cup\pw(S(z))| =
|\pw(S(x))|+|\pw(S(y))|+|\pw(S(z))|-|\pw(S(x)\cap S(y))|
-|\pw(S(y)\cap S(z))|-|\pw(S(z)\cap S(x))|+|\pw(S(x)\cap S(y)\cap S(z))| =
3\cdot 2^{23}-3\cdot 2^3+2 = 25,165,802$. 
\end{example}

\begin{example}
The last pictorial example is $\Cm{\alg{F}_{\var{Ho}}}(2)$ given in
Figure~\ref{fig:Po3(2)}. It is depicted as a cutout of
Figure~\ref{fig:P3(3)} to highlight the fact that
$\Cm{\alg{F}_{\var{Ho}}}(2)$ is an upset in $\Cm{\alg{F}_{\var{Hi}}}(3)$.
\begin{figure}
\begin{center}
\begin{tikzpicture}[scale=0.8]
\clip (-6.4,-2) rectangle (0,4.5);  

\filldraw[black!15!white] (0,0) circle [radius=6.4];
\filldraw[white] (0,0) circle [radius=5.5]; 
  
\filldraw[black!15!white] (0,0) circle [radius=4.6];
\filldraw[white] (0,0) circle [radius=3.4]; 

\filldraw[black!15!white] (0,0) circle [radius=2.3];
\filldraw[white] (0,0) circle [radius=1.1];

\foreach \x in {2,4}
\node[sdot] (v\x) at (\x*360/7:2) {};
\node[dot] (v3) at (3*360/7:2) {};
\node[sdot] (v5) at (5*360/7:1.4) {};
\node[right] at (v4) {$\{x\}$};
\node[right] at (v3) {$\{x,y\}$};
\node[below] at (v2) {$\{y\}$};

\node[sdot] (u11) at (11*360/35:4) {};
\foreach \x in {12,14,15,16,19}
\node[dot] (u\x) at (\x*360/35:4) {};
\node[sdot] (u20) at (20*360/35:4) {};

\node[right] at (u14.north) {$\{y\}$};
\node[dot] (u14) at (14*360/35:4) {};
\node[dot] (u15) at (15*360/35:4) {};
\node[above] at (u16) {$\{x\}$};

\foreach \x in {15,16}
\node[dot] (w\x) at (\x*360/40:6) {};
\foreach \x in {20,21}
\node[dot] (w\x) at (\x*360/40:6) {};

\path
(v2) edge (u12) 
(v2) edge (u11)
(v3) edge (u12)
(v3) edge (u14) (v3) edge (u15) (v3) edge (u16)
(v3) edge (u19) (v4) edge (u19)
(v4) edge (u20) 

(v2) edge (w15) (w15) edge (u14)
(u14) edge (w16)

(u16) edge (w20)
(u16) edge (w21) (w21) edge[bend left=5] (v4)
;
\end{tikzpicture}
\end{center}
\caption{$\Cm{\alg{F}_{\var{Ho}}(2)}$ as an upset of $\Cm{\alg{F}_{\var{Hi}}(3)}$.} 
\label{fig:Po3(2)}    
\end{figure}
\end{example}

\subsection{Varieties of bounded height}

For each $k\in \mathbb{N}$ define a term $d_k(x_1,\dots,x_k)$ inductively by
putting
$d_1(x_1) = x_1$, $d_2(x_1,x_2) = ((x_2\ra x_1)\ra x_2)\ra x_2$,
and 
$d_{k+1}(x_1,\dots, x_{k+1}) = d_2(d_k(x_1,\dots,x_k), x_{k+1})$, for
$k\geq 2$. Let $\var{Hi}_k$ be defined relative to $\var{Hi}$
by the identity
\begin{equation}\label{eq:Hk}\tag{$H_k$}
d_{k+1}(x_1,\dots, x_{k+1}) = 1.
\end{equation}
We call $\var{Hi}_k$ \emph{the variety of height $k$}. The nomenclature is 
justified by the following characterisation. 

\begin{theorem}
Let $\alg{A}\in \var{Hi}$. Then  $\alg{A}\in \var{Hi}_k$ if and only if
the longest chain in $\Cm{\alg{A}}$ is of length at most $k$.  
\end{theorem}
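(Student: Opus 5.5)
The plan is to reduce the statement to subdirectly irreducible algebras and then argue by induction on $k$, using the masting description of Theorem~\ref{thm:Hi-si}.

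\emph{Reduction to subdirectly irreducibles.} Since $\var{Hi}_k$ is a variety, $\alg{A}\in\var{Hi}_k$ if and only if $\alg{A}/\mu\in\var{Hi}_k$ for every $\mu\in\cm{\alg{A}}$. On the order side, a chain $\mu_1\supsetneq\dots\supsetneq\mu_r$ in $\Cm{\alg{A}}$ lies in $\up{\mu_r}$. By the correspondence theorem, $\up{\mu_r}$ is isomorphic to $\Cm{(\alg{A}/\mu_r)}$. Hence the longest chain in $\Cm{\alg{A}}$ has length at most $k$ if and only if $h(\alg{B})\leq k$ for every subdirectly irreducible quotient $\alg{B}$ of $\alg{A}$. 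So it suffices to prove: a subdirectly irreducible $\alg{B}$ satisfies $(H_k)$ if and only if $h(\alg{B})\leq k$.

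\emph{Height of a masted algebra.} Write $\alg{B}=\alg{C}^\oplus$. By Theorem~\ref{thm:Hi-si} we have $\alg{C}\cong\alg{B}/\mu$, where $\mu$ is the monolith, and $C\cong B\smallsetminus\{\op\}$ is a subalgebra. Then $\Cm{\alg{B}}$ is $\up{\mathrm{Eq}}\cong\Cm{\alg{C}}$ together with the new bottom $\mathrm{Eq}^{\alg{B}}$, so $h(\alg{B})=h(\alg{C})+1$. The base case $k=1$ is then immediate: $h(\alg{B})=1$ means $\alg{B}\cong\alg{2}$, and $d_2(x_1,x_2)=1$ is the Tarski identity up to a routine rewriting.

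\emph{Direct computation in $\alg{C}^\oplus$.} For $a\in C$ we have $\op\ra a=a$ (computed in $\alg{C}$, where $\op$ is the top) and $a\ra\op=1$. Hence
$$d_2(a,\op)=\bigl((\op\ra a)\ra\op\bigr)\ra\op=(a\ra\op)\ra\op=1\ra\op=\op,$$
while $d_2(1,\op)=1$.

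\emph{Failure when the height is large.} Suppose $h(\alg{B})\geq k+1$. Then $h(\alg{C})\geq k$. By the inductive hypothesis, applied to a subdirectly irreducible quotient of $\alg{C}$ of height exactly $k$ (through the reduction above), there are elements $a_1,\dots,a_k\in C$ with $d_k(a_1,\dots,a_k)\neq 1$ in $\alg{C}$. Because $C$ is a subuniverse of $\alg{B}$, the value $d_k(\bar a)$ is the same element of $C$ computed in $\alg{B}$, so it differs from $1$. Then $d_{k+1}(\bar a,\op)=d_2(d_k(\bar a),\op)=\op$ by the computation above, so $(H_k)$ fails in $\alg{B}$. (For $k=1$ one needs $d_1(a_1)=a_1\neq 1$, which is trivial.)

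\emph{Validity when the height is small.} Suppose $(H_k)$ fails in $\alg{A}$, say $d_{k+1}(\bar a)\neq 1$. Pass to some $\nu\in\cm{\alg{A}}$ with $(d_{k+1}(\bar a),1)\in\nu^+\smallsetminus\nu$, so that in $\alg{B}=\alg{A}/\nu$ we get $d_2(u,b)=\op$, where $u=d_k(a_1,\dots,a_k)$ and $b=a_{k+1}$. The key claim is that in a subdirectly irreducible $\alg{B}$,
$$d_2(u,b)=\op\ \text{ implies }\ b\in\{1,\op\}\text{ modulo the monolith, and } u\notin 1/\mu^{\alg{B}}.$$
More precisely, the aim is $b=\op$ and $u\neq 1$ in $\alg{C}=\alg{B}/\mu$, which gives $d_k(\bar a/\mu)\neq 1$ in $\alg{C}$. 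Then by induction $h(\alg{C})\geq k$, so $h(\alg{B})\geq k+1$, and the chain lifts to $\Cm{\alg{A}}$.

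To prove the claim I would first use $d_2(u,b)\geq b$ and $d_2(u,b)\geq b\ra u$ (from $(y\ra b)\ra b\geq y$, valid in $\var{Hi}$). These give $b\leq\op$ and $b\ra u\leq\op$. Then I would split into the case $b=\op$, handled by the explicit computation above, and the case $b<\op$. In the case $b<\op$ the hope is to derive either $(b\ra u)\ra b=b$, or a contradiction, by computing inside the free extension $\langle\alg{B}\rangle$, where meets are available. If $b<\op$ can genuinely occur, I would instead replace $\nu$ by a larger completely meet-irreducible congruence at which $b$ becomes the coatom, using Theorem~\ref{thm:PrimeFilter}. I expect this case analysis to be the main obstacle.
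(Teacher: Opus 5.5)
\textbf{Gap: the direction ``height at most $k$ implies $(H_k)$'' is not proved.} Everything in that direction rests on your key claim, which you leave open. The strategies you list for it are speculative and unnecessary: a separate case $b<\op$, computing in $\langle\alg{B}\rangle$, and moving $\nu$ upward via Theorem~\ref{thm:PrimeFilter}.

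The missing idea is an elementary fact: in a subdirectly irreducible Hilbert algebra, $x\ra y=\op$ holds only when $x=1$ and $y=\op$. This follows directly from the definition of $\ra$ in $\alg{C}^\oplus$. If $x\le y$ the value is $1$. If $x\neq 1$ and $x\not\le y$, then $x,y\in C$, and $x\ra^{\alg{C}}y$ is not the top $\op$ of $\alg{C}$. It is also Theorem~\ref{thm:oprema}(2), and it is used in the proof of Theorem~\ref{thm:mono}.

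Apply it to $d_2(u,b)=\bigl((b\ra u)\ra b\bigr)\ra b=\op$. It gives $b=\op$ and $(\op\ra u)\ra\op=1$. Hence $\op\ra u\neq 1$, i.e.\ $u\notin\{1,\op\}$. So the case $b<\op$ never occurs, $u/\mu\neq 1$ in $\alg{C}=\alg{B}/\mu$, and your induction closes.

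The paper's argument is shorter and needs no $\nu$ with $d_{k+1}(\bar a)\in\nu^+\smallsetminus\nu$. In any $\alg{A}^\oplus$ one has $d_2(u,b)=1$ whenever $u\in\{1,\op\}$: if $u=\op$ and $b\neq 1$, then $b\ra\op=1$ and $(1\ra b)\ra b=1$. So a failure $d_2(u,b)\neq 1$ in a subdirectly irreducible algebra already forces $u<\op$. Passing to $\alg{A}^\oplus/\theta(\op,1)\cong\alg{A}$ then contradicts the inductive hypothesis.

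\textbf{Your other direction is correct and differs from the paper's.} The paper fixes a chain $\mu_{k+1}\subsetneq\dots\subsetneq\mu_1$ in $\Cm{\alg{A}}$ and elements $a_i$ with $a_i/\mu_i=\op_{\mu_i}$, then pushes the identity down the chain. You instead induct through the masting and the subuniverse $B\smallsetminus\{\op\}\cong\alg{C}$. That is tidy and avoids the bookkeeping along the chain.

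Keep the identification straight, though. $C$ with top $\op$ is not a subuniverse of $\alg{B}$, since $\op\ra\op=1$. Also, $d_2(a,\op)=\op$ holds only for $a<\op$, because $d_2(\op,\op)=1$. What you actually need is $d_k(\bar a)\notin\{1,\op\}$, and computing in the subuniverse $B\smallsetminus\{\op\}$ delivers exactly that. With that reading the step is fine.
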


\begin{proof}
First we show that no 
algebra $\alg{A}$ with $h(\alg{A})\leq k$ can falsify~\eqref{eq:Hk}. 
We proceed by induction on $k$. All $\{\ra,1\}$-subreducts of Boolean algebras
satisfy ($H_1$) so the base case holds.
For the inductive step, assume all algebras $\alg{B}$ with
$h(\alg{B}) \leq k$ satisfy~\eqref{eq:Hk}, but some
algebra $\alg{D}$ with $h(\alg{D}) = k+1$ falsifies
$(H_{k+1})$. We can assume $\alg{D} = \alg{A}^\oplus$ with
$h(\alg{A}) = k$. 
Suppose
$$
\left(\bigl(a_{k+2}\ra d_{k+1}(a_1,\dots,a_{k+1})\bigr)\ra
  a_{k+2}\right) \ra a_{k+2}\neq 1
$$
on assigning
$x_i \mapsto a_i\in D$. This implies
$1\neq a_{k+2}\not\leq d_{k+1}(a_1,\dots,a_{k+1})$ and therefore
$d_{k+1}(a_1,\dots,a_{k+1})<\op$. Now, since
$\alg{A} \cong \alg{D}/\theta(\op,1)$, it further follows that
$d_{k+1}(a_1/\mu,\dots,a_{k+1}/\mu)<1$ in $\alg{A}$, where
$\mu=\theta(\op,1)$. This contradicts the inductive hypothesis.

Conversely, assume $\alg{A}$ satisfies~\eqref{eq:Hk}, yet $h(\alg{A})> k$.  
There exist $\mu_1,\dots,\mu_{k+1}\in\cm{\alg{A}}$ with
$\mu_{k+1}\subsetneq \mu_k\subsetneq\dots\subsetneq
\mu_1$. Take $a_1,\dots,a_{k+1}\in A$ such that for each $i\in\{1,\dots,k+1\}$ we
have $\mu_i\vee\theta(a_i,1) = \mu_i^+$, so that
$a_i/\mu_i = \op_{\mu_i}$. Obviously,
$\mu_{k+1}\subsetneq\mu_{k+1}^+\subseteq\mu_k\subsetneq\dots\subsetneq
\mu_2\subsetneq\mu_2^+\subseteq\mu_1\subsetneq \mu_1^+$.
Then $a_1\notin 1/\mu_2$ and so
$d_2(a_1/\mu_2,a_2/\mu_2) = d_2(a_1/\mu_2,\op_{\mu_2}) = 
((\op_{\mu_2}\ra a_1/\mu_2)\ra \op_{\mu_2})\ra\op_{\mu_2}
= \op_{\mu_2}$, establishing
\begin{equation}\label{eq:d2}
d_2(a_1,a_2)/\mu_2 = \op_{\mu_2}.
\end{equation}  
On the other hand, since $\alg{A}$ satisfies~\eqref{eq:Hk}, we
have
\begin{align*}
1/\mu_{k+1}
&= d_{k+1}(a_1/\mu_{k+1},\dots, a_{k+1}/\mu_{k+1})\\
&= d_2(d_{k}(a_1/\mu_{k+1},\dots,a_{k}/\mu_{k+1}),a_{k+1}/\mu_{k+1})\\
&= d_2(d_{k}(a_1/\mu_{k+1},\dots,a_{k}/\mu_{k+1}),\op_{\mu_{k+1}})
\end{align*}
and therefore $d_{k}(a_1,\dots,a_{k})/\mu_{k+1} = 1/\mu_{k+1}$. Hence,
$d_{k}(a_1,\dots,a_{k})/\mu_{k} = 1/\mu_{k}$ and repeating the
reasoning $k-2$ times we obtain $d_2(a_1,a_2)/\mu_2 = 1/\mu_2$,
contradicting~\eqref{eq:d2}.
\end{proof}

Consider any subvariety $\mathcal{V}$ of $\var{Hi}$
or of $\var{Ho}$. 
Let $\mathcal{V}_k\deq \mathcal{V}\cap \mathrm{Mod}(H_k)$. 
The structure of the $n$-generated free algebra
$\alg{F}_{\mathcal{V}_k}(n)$ is the same as $\FV(n)$ except that
$\Cm{\alg{F}_{\mathcal{V}_k}(n)} =
\left(\bigcup_{r=1}^{k} P_r(n);\subseteq\right)$,
where $P_r(n)$ are computed in $\mathcal{V}$. 
Note that for any 
$G\in \mathrm{Up}\left(\bigcup_{r=1}^{k}P_r(n)\right)$ 
such that $G\cap P_{k}(n)\neq \emptyset$ we have 
$\bigl(\alg{F}_{\mathcal{V}_k}(n)/\bigcap G\bigr)^\oplus\notin\mathcal{V}_k$
and so summing up to $k$ suffices (see the remarks 
preceding Lemma~\ref{lem:layers-gen}).
Applying the map $S$ from Theorem~\ref{thm:mono}~\eqref{eq:S} to
$\Cm{\alg{F}_{\mathcal{V}_k}(n)}$ we immediately obtain the next result. 

\begin{theorem}\label{thm:free-Hik}
Let $\mathcal{V}$ be as above. For any $n > 0$, we have:
$$
\alg{F}_{\mathcal{V}_k}(n)\cong
\left(\bigcup_{i=1}^n\pw(S(x_i));\ra,\emptyset\right)
\leq \alg{Ac}(\Cm{\alg{F}_{\mathcal{V}_k}(n)}),
$$
for $\mathcal{V}_k\subseteq \var{Hi}$;
in particular, $\alg{F}_{\mathcal{V}_k}(n)\cong\FV(n)$
for any $k\geq n$.   
$$
\alg{F}_{\mathcal{V}_k}(n)\cong
\left(\bigcup_{i=1}^{n}\pw(S(x_i))\cup\pw(S(0));\ra,\emptyset, P_1(n)\right)
\leq \alg{Ac}(\Cm{\alg{F}_{\mathcal{V}_k}(n)}),
$$
for $\mathcal{V}_k\subseteq \var{Ho}$, where 
$P_1(n)$ is the interpretation of\/ $0$;
in particular,
$\alg{F}_{\mathcal{V}_k}(n)\cong\FV(n)$ for any $k\geq n+1$.   
\end{theorem}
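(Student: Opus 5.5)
The plan is to treat $\mathcal{V}_k$ as an ordinary subvariety of $\var{Hi}$ (resp.\ $\var{Ho}$) and apply Theorem~\ref{thm:free-Hilbert-alg} (resp.\ Theorem~\ref{thm:free-Hi0-alg}) to it directly. Those theorems hold for an arbitrary subvariety, so they give the displayed isomorphisms with the antichain algebra on $\Cm{\alg{F}_{\mathcal{V}_k}(n)}$, with $S$ computed in $\alg{F}_{\mathcal{V}_k}(n)$. What remains is to identify $\Cm{\alg{F}_{\mathcal{V}_k}(n)}$ with $\bigcup_{r=1}^{k}P_r(n)$, the $P_r(n)$ computed in $\mathcal{V}$, and to prove the two ``in particular'' clauses.

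\textbf{Identifying the layers.} I would argue by induction on the layer index using Lemma~\ref{lem:layers-gen}. For $r=1$, both $\mathcal{V}$ and $\mathcal{V}_k$ contain $\alg{2}$, so Lemma~\ref{lem:P1} (resp.\ Lemma~\ref{lem:P1-with-0}) gives the same first layer. For $2\le r\le k$, an upset $G$ of the first $r-1$ layers determines the quotient $\FV(n)/\bigcap G$, which has height $r-1$. Its masting has height $r\le k$. By the preceding characterisation theorem of $\var{Hi}_k$ (algebras with longest chain in $\Cm{}$ of length at most $k$), this masting lies in $\mathcal{V}$ if and only if it lies in $\mathcal{V}_k$. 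So the admissibility condition in Lemma~\ref{lem:layers-gen} is the same in both varieties, and the layers coincide up to $k$.

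The quotients $\alg{F}_{\mathcal{V}_k}(n)/\bigcap G$ and $\FV(n)/\bigcap G$ agree, since both are the relatively free image obtained by the same recursive construction. I would make this precise by noting that $\alg{F}_{\mathcal{V}_k}(n)=\FV(n)/\psi$, where $\psi$ is the meet of the $\mu$ of height $\le k$. Hence $\Cm{\alg{F}_{\mathcal{V}_k}(n)}$ is exactly the set of $\mu\in\cm{\FV(n)}$ with $\FV(n)/\mu\in\mathcal{V}_k$, that is, with $h(\mu)\le k$. This is an upset, and it equals $\bigcup_{r\le k}P_r(n)$. The same observation gives that layer $k+1$ is empty: any masting of a height-$k$ quotient has height $k+1$ and so is excluded from $\mathcal{V}_k$. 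This is precisely the remark preceding the theorem.

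\textbf{Finishing.} Once the identification holds, apply $S$ from \eqref{eq:S} to $\alg{F}_{\mathcal{V}_k}(n)$ and quote Theorems~\ref{thm:transit} and~\ref{thm:mono}, exactly as in the proof of Theorem~\ref{thm:free-Hilbert-alg}. For the zero case, add $\pw(S(0))$ with $S(0)=P_1(n)$.

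For the ``in particular'' clauses, Lemma~\ref{lem:layers} (via its pigeonhole argument) shows that $P_r(n)=\emptyset$ for $r>n$. In the zero case there is one extra layer, so $P_r(n)=\emptyset$ for $r>n+1$. Hence if $k\ge n$ (resp.\ $k\ge n+1$), then $\bigcup_{r\le k}P_r(n)$ is all of $\cm{\FV(n)}$, so $\psi$ is trivial and $\alg{F}_{\mathcal{V}_k}(n)\cong\FV(n)$.

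\textbf{Main obstacle.} The delicate step is the identification of $\cm{\alg{F}_{\mathcal{V}_k}(n)}$ with the height-$\le k$ part of $\cm{\FV(n)}$. It requires that the height of $\mu$ in $\Cm{\FV(n)}$ equals $h(\FV(n)/\mu)$. This holds because $\Cm{}$ of the quotient is $\up{\mu}$, and it is what lets the characterisation theorem of $\var{Hi}_k$ apply, with $\mathcal{V}$ then intersected in.
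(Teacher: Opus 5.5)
Your proposal is correct and follows essentially the paper's route. You identify $\Cm{\alg{F}_{\mathcal{V}_k}(n)}$ with $\bigcup_{r=1}^{k}P_r(n)$, apply $S$ exactly as in Theorem~\ref{thm:free-Hilbert-alg} (resp.\ Theorem~\ref{thm:free-Hi0-alg}), and get the ``in particular'' clauses from the bound of $n$ (resp.\ $n+1$) on the number of layers. Your argument via the least congruence $\psi$ with quotient in $\mathcal{V}_k$, together with the height characterisation of $\var{Hi}_k$, makes explicit what the paper only states as a remark before the theorem, and it makes your layer-by-layer induction through Lemma~\ref{lem:layers-gen} unnecessary.
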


Analogously, consider a subvariety $\mathcal{W}$ of $\var{Br}$ or of
$\var{Bo}$ and let
$\mathcal{W}_k\deq \mathcal{W}\cap \mathrm{Mod}(B_k)$. 

\begin{theorem}\label{thm:free-Brk}
Let $\mathcal{W}$ be as above. For any $n > 0$, we have
$$
\alg{F}_{\mathcal{W}_k}(n)\cong
\left\langle\bigcup_{i=1}^n\pw(S(x_i));\ra,\emptyset\right\rangle
\cong \alg{Acm}(\Cm{\alg{F}_{\mathcal{W}_k}(n)}),
$$
for $\mathcal{W}_k\subseteq \var{Br}$; in particular,
$\alg{F}_{\mathcal{W}_k}(n)\cong\alg{F}_{\mathcal{W}}(n)$ for any
$k\geq n$.   
$$
\alg{F}_{\mathcal{W}_k}(n)\cong
\left\langle\bigcup_{i=1}^{n}\pw(S(x_i))\cup\pw(S(0));\ra,\emptyset, P_1(n)\right\rangle
\cong \alg{Acm}(\Cm{\alg{F}_{\mathcal{W}_k}(n)}),
$$
for $\mathcal{W}_k\subseteq \var{Bo}$;
in particular, $\alg{F}_{\mathcal{W}_k}(n)\cong\alg{F}_{\mathcal{W}}(n)$ for any
$k\geq n+1$.   
\end{theorem}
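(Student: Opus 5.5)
The plan is to derive Theorem~\ref{thm:free-Brk} from Theorem~\ref{thm:free-Hik}, using the bridge between $\Lambda(\var{Br})$ and $\Lambda(\var{Hi})$ already set up for Theorem~\ref{thm:free-Br-semi}. Here $(B_k)$ is read as the identity~\eqref{eq:Hk} viewed in the signature of Brouwerian semilattices. Since \eqref{eq:Hk} is already an $\{\ra,1\}$-identity, the first step is to check that
\[
f(\mathcal{W}_k) = f(\mathcal{W})\cap\var{Hi}_k = f(\mathcal{W})_k .
\]
This holds because $f$ preserves meets (Theorem~\ref{thm:subvariety-isom}). Equivalently, $\alg{A}\in f(\mathcal{W}_k)$ if and only if $\langle\alg{A}\rangle\in\mathcal{W}$ and $\langle\alg{A}\rangle\models(B_k)$. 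The last condition is equivalent to $\alg{A}\models(H_k)$ in either of two ways:
\begin{itemize}
\item by the characterisation theorem for $\var{Hi}_k$ applied to both $\alg{A}$ and $\langle\alg{A}\rangle$, using $\Con{\alg{A}}\cong\Con{\langle\alg{A}\rangle}$ from Lemma~\ref{lem:min-ext}, so that their posets of completely meet-irreducible congruences have the same height;
\item or directly, since $\alg{A}$ is a subreduct of $\langle\alg{A}\rangle$ and the identity is in the $\{\ra,1\}$-language; the reverse direction again uses the height characterisation.
\end{itemize}
Thus $\mathcal{V}\deq f(\mathcal{W})$ satisfies $\widehat{\mathcal{V}_k}=\mathcal{W}_k$.

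Next I will apply Theorem~\ref{thm:same-con} to $\mathcal{W}_k$. It gives $\alg{F}_{\mathcal{W}_k}(n)\cong\langle\alg{F}_{\mathcal{V}_k}(n)\rangle$ and $\Cm{\alg{F}_{\mathcal{W}_k}(n)}\cong\Cm{\alg{F}_{\mathcal{V}_k}(n)}$. By Theorem~\ref{thm:free-Hik}, $\alg{F}_{\mathcal{V}_k}(n)$ is isomorphic to $\bigl(\bigcup_i\pw(S(x_i));\ra,\emptyset\bigr)$, which gives the first isomorphism.

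For the second isomorphism I have two routes. One is to invoke Theorem~\ref{thm:mono}(3) for the finite Brouwerian semilattice $\alg{F}_{\mathcal{W}_k}(n)$ (finite by local finiteness), giving $\alg{F}_{\mathcal{W}_k}(n)\cong\alg{Acm}(\Cm{\alg{F}_{\mathcal{W}_k}(n)})$. The other is to use the universal property of the free extension, Lemma~\ref{lem:min-ext}. The embedding $S$ of $\alg{F}_{\mathcal{V}_k}(n)$ into $\alg{Acm}(\Cm{\alg{F}_{\mathcal{V}_k}(n)})^\ra$ extends to an embedding of the free extension. This embedding is onto, because every antichain is a meet of singletons, and singletons lie in the image by Theorem~\ref{thm:repres}\eqref{a}.

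The "in particular" clause follows from the corresponding clause of Theorem~\ref{thm:free-Hik} ($\alg{F}_{\mathcal{V}_k}(n)\cong\FV(n)$ for $k\ge n$) by taking free extensions and applying Theorem~\ref{thm:same-con} to both $\mathcal{W}$ and $\mathcal{W}_k$. The zero case is identical, using the analogues of Lemmas~\ref{lem:min-ext} and~\ref{lem:f-charact} and Theorem~\ref{thm:same-con} for $\var{Bo}$ and $\var{Ho}$ noted before Theorem~\ref{thm:free-Br-semi-0}, with the bound $k\ge n+1$ coming from the extra layer.

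The main obstacle I expect is the first step, namely identifying $f(\mathcal{W}_k)$ with $f(\mathcal{W})_k$ via the height characterisation transferred through the congruence-lattice isomorphism $\Con{\alg{A}}\cong\Con{\langle\alg{A}\rangle}$. The rest is bookkeeping with results already established.
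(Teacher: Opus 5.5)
Your argument is correct and is essentially the one the paper intends. The paper gives no separate proof, saying only ``analogously'': it transports Theorem~\ref{thm:free-Hik} through the free extension and Theorem~\ref{thm:same-con}, just as Theorem~\ref{thm:free-Br-semi} is obtained from Theorem~\ref{thm:free-Hilbert-alg}, with Theorem~\ref{thm:mono}(3) giving the $\alg{Acm}$ isomorphism. Your explicit check that $f(\mathcal{W}_k)=f(\mathcal{W})\cap\var{Hi}_k$ fills in the step the paper leaves implicit; reading $(B_k)$ as $(H_k)$ in the Brouwerian signature is the intended meaning, as the ``in particular'' clause confirms.
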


\subsection{Varieties of bounded width}
We begin by defining a shorthand. 
For a set of terms $T = \{t_1,\dots, t_\ell\}\cup\{s\}$ we will write
$T\ra s$ or $\{t_1,\dots, t_\ell\}\ra s$ for the term
$t_1\ra(t_2\ra \dots \ra (t_\ell\ra s)\dots)$.
By equation (6) of Section~\ref{sec:Hi}, this
notation is unambiguous, since
$t_1\ra(t_2\ra \dots \ra (t_\ell\ra s)\dots) = t_{\pi(1)}\ra(t_{\pi(2)}\ra \dots \ra
(t_{\pi(\ell)}\ra s)\dots)$ holds for any permutation $\pi$ of
$\{1,\dots,\ell\}$. 
Now, for each $k\in\mathbb{N}$
pick a set of pairwise distinct variables
$\bigl\{x_i,: i\in\{1,\dots,k+1\}\bigr\}\cup\{y\}$.
For each $i\in\{1,\dots,k+1\}$ define a term
$$
q_{k,i}(x_1,\dots,x_{k+1}) \deq \{x_j: j\neq i\}\ra x_i
$$
and then put
\begin{equation}\label{eq:Bk}\tag{$B_k$}
\{q_{k,i}(x_1,\dots,x_{k+1})\ra y: 1\leq i\leq k+1\}\ra y = 1.
\end{equation}
Let $\var{Hb}_k$ be the variety defined relative to $\var{Hi}$
by~\eqref{eq:Bk}. 

\begin{theorem}
Let $\alg{A}\in \var{Hi}$. Then, $\alg{A}\in\var{Hb}_k$ if and only if
the longest antichain in $\Cm{\alg{A}}$ with a common lower bound, is of length
at most $k$.   
\end{theorem}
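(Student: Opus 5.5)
We prove both directions by working in subdirectly irreducible quotients $\alg{A}/\mu$ with $\mu\in\cm{\alg{A}}$, using the equivalence $\mu\in S(t)\iff t/\mu=\op_\mu$. We also use the description of $\ra$ on antichains from Theorem~\ref{thm:mono}: $\mu\in S(s\ra t)$ iff $s\in 1/\mu$ and $t/\mu=\op_\mu$. Write $Q$ for the left-hand side of~\eqref{eq:Bk}, $p_i = q_{k,i}\ra y$, and $T=\{p_1,\dots,p_{k+1}\}$.

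\emph{Identity $\Rightarrow$ width bound.} Suppose $\nu_1,\dots,\nu_{k+1}\in\cm{\alg{A}}$ form an antichain with a common lower bound $\mu_0\in\cm{\alg{A}}$. Using Theorem~\ref{thm:repres}\eqref{a} in finitely generated subalgebras, or directly from $\mu_i\in\cm{\alg A}$, pick $a_i$ with $S(a_i)\ni\nu_i$. Better, pick $a_i$ with $a_i/\nu_i=\op_{\nu_i}$ and $a_i\in 1/\nu_j$ for all $j\neq i$. This should be possible because the $\nu$'s are pairwise incomparable: by CD and projectivity as in the proof of Theorem~\ref{thm:repres}, we can find an element in $1/\nu_j$ but not in $1/\nu_i$, and then combine these using $\ra$-terms.

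Then $q_{k,i}(\bar a)/\nu_i=\op_{\nu_i}$, while $q_{k,i}(\bar a)\in 1/\nu_j$ for $j\ne i$. Take $\mu\subseteq\mu_0$ of maximal height below all $\nu_i$, and take $y$ with $y/\mu=\op_\mu$. Evaluating $p_i$ modulo $\mu$: since $q_{k,i}\notin 1/\mu$ and $\op_\mu$ is the coatom, we get $p_i/\mu=\op_\mu$ provided $q_{k,i}/\mu\ne\op_\mu$. Then $Q/\mu=\op_\mu\neq 1$, which falsifies~\eqref{eq:Bk}. Making this work requires choosing $\mu$ so that no $q_{k,i}$ collapses to $\op_\mu$. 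Choosing $\mu$ strictly below some congruence that lies strictly below every $\nu_i$ handles this; otherwise one applies a masting $(\alg{A}/\mu)^\oplus$-style argument inside $\alg A$.

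\emph{Width bound $\Rightarrow$ identity.} Suppose $Q(\bar a,b)\neq1$. Take $\mu$ with $Q/\mu=\op_\mu$. Iterating the $\ra$ description gives $p_i\in 1/\mu$ for all $i$ and $b/\mu=\op_\mu$. Since $p_i=q_{k,i}\ra b\in 1/\mu$ while $b\notin 1/\mu$, we get $q_{k,i}\notin 1/\mu$. Hence, by the argument in the proof of Theorem~\ref{thm:mono}, there is $\nu_i\supseteq\mu$ with $\nu_i\in S(q_{k,i})$. From $\nu_i\in S(q_{k,i})$ we get $x_j\in 1/\nu_i$ for $j\neq i$ and $a_i/\nu_i=\op_{\nu_i}$; in particular $a_i\notin 1/\nu_i$.

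Next we show the $\nu_i$ are pairwise incomparable. If $\nu_i\subseteq\nu_j$ with $i\ne j$, then $a_i\in1/\nu_j$ (because $i\neq j$) and also $a_i\notin 1/\nu_i$. We still need to exclude $\nu_i\supseteq\nu_j$. That case gives $a_j\in 1/\nu_i\subseteq$, which is impossible since $1/\nu_j\subseteq1/\nu_i$ and $a_j\notin 1/\nu_j$. This needs care, so we argue with the inclusion $1/\nu_i\subseteq 1/\nu_j$ carefully. If $\nu_i\subseteq\nu_j$, then all $x_l$ with $l\ne i$ lie in $1/\nu_j$, and so does $a_i$. Hence every $a_l\in 1/\nu_j$, so $q_{k,j}\in 1/\nu_j$, contradicting $\nu_j\in S(q_{k,j})$.

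So the $\nu_i$ form an antichain of size $k+1$ with common lower bound $\mu$. This contradicts the width bound.

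The main obstacle is the first direction: constructing witnesses $a_i$ that separate the antichain exactly. This needs the projectivity/complement argument used in Theorem~\ref{thm:repres}.
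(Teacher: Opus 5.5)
Your ``width bound $\Rightarrow$ identity'' direction is correct. The converse direction has a genuine gap, exactly at the step you flag. The existence of witnesses $a_i\in\bigcap_{j\neq i}1/\nu_j\smallsetminus 1/\nu_i$ is asserted, not proved. For a single $j$, elements of $1/\nu_j\smallsetminus 1/\nu_i$ are easy to find, but ``combining them using $\ra$-terms'' is not an argument, and natural candidates fail. For example, $(c\ra d)\ra d$ lies above both $c$ and $d$, yet it equals $1$ modulo $\nu_i$ when $\alg{A}/\nu_i$ is pure and $c/\nu_i$, $d/\nu_i$ are incomparable.

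You invoke CD, but the consequence you actually need is meet-primeness. In the distributive lattice $\Con{\alg{A}}$, suppose $\bigcap_{j\neq i}\nu_j\subseteq\nu_i$. Then $\nu_i=\bigcap_{j\neq i}(\nu_i\vee\nu_j)$, so by meet-irreducibility $\nu_j\subseteq\nu_i$ for some $j\neq i$, contradicting the antichain assumption. Then 1-regularity (the filter--congruence isomorphism) turns $\bigcap_{j\neq i}\nu_j\not\subseteq\nu_i$ into $\bigcap_{j\neq i}1/\nu_j\not\subseteq 1/\nu_i$. This is exactly how the paper obtains its $a_i$. Note also that you do not need $a_i/\nu_i=\op_{\nu_i}$: $a_i\notin 1/\nu_i$ already gives $q_{k,i}(\bar a)/\nu_i=a_i/\nu_i\neq 1$.

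The evaluation step is also wrong as written. In the subdirectly irreducible quotient $\alg{A}/\mu$, every element other than $1$ lies below $\op_\mu$. So $q_{k,i}\notin 1/\mu$ gives $p_i/\mu=q_{k,i}/\mu\ra\op_\mu=1$, not $\op_\mu$, and it is this that yields $Q/\mu=\op_\mu$. If all $p_i/\mu$ were $\op_\mu$, you would get $Q/\mu=1$.

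Consequently $q_{k,i}/\mu=\op_\mu$ poses no danger and $\mu$ needs no special choice. Your proposed remedies (a congruence strictly below all $\nu_i$, which need not exist, or a ``masting argument inside $\alg{A}$'') are unjustified and also unnecessary. Take $\mu=\mu_0$ and any $c$ with $c/\mu_0=\op_{\mu_0}$. Then $q_{k,i}(\bar a)\notin 1/\nu_i\supseteq 1/\mu_0$, so $Q(\bar a,c)/\mu_0=\op_{\mu_0}\neq 1/\mu_0$, falsifying \eqref{eq:Bk}.

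With these two repairs this direction is complete, and shorter than the paper's. After choosing the $a_i$, the paper passes to the finite subalgebra generated by $\op,a_1,\dots,a_{k+1}$ and to an antichain of irreducible elements $b_i$ with $q_{k,i}(\bar b)=b_i$.

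Your other direction is sound. The last version of your incomparability argument works; more simply, $\nu_i\subseteq\nu_j$ gives $a_j\in 1/\nu_i\subseteq 1/\nu_j$ (read $a_j$ for $x_j$). It differs from the paper's route. The paper reduces to a subdirectly irreducible $\alg{A}$ with $c=\op$, shows the $a_i$ form an antichain, and separates them by the Prime Filter Theorem. You instead extract the $\nu_i$ directly above $\mu$ via the $\ra$-calculus of Theorem~\ref{thm:mono} and Zorn's lemma.
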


\begin{proof}
Let $w(\alg{A})$ stand for the length of the longest
antichain in $\Cm{\alg{A}}$ with a common lower bound.
First we show that every 
algebra $\alg{A}$ with $w(\alg{A})\leq k$ satisfies~\eqref{eq:Bk}. 
Suppose $w(\alg{A}) \leq k$, but $\alg{A}$ falsifies~\eqref{eq:Bk}.
Since taking quotients cannot increase the width, we can assume 
$\alg{A}$ is subdirectly irreducible. Take elements
$a_1,\dots,a_{k+1},c\in A$ such that
\begin{equation}\label{eq:fail-q}\tag{\dag}
\alg{A}\models
\{q_{k,i}(a_1,\dots,a_{k+1})\ra c:1\leq i\leq k+1\}\ra c\neq 1.
\end{equation}
We can also assume $c = \op$ (otherwise just take a suitable quotient).
Hence, $q_{k,i}(a_1,\dots,a_{k+1})\ra \op = 1$
for all $i\in\{1,\dots,k+1\}$ and thus $q_{k,i}(a_1,\dots,a_{k+1})\leq\op$
for every $i\in\{1,\dots,k+1\}$.

Next, we claim that the elements $a_i$ and $a_j$ are incomparable, for $i\neq j$.
To see it, suppose that $a_{i_0}\leq a_{j_0}$ for some
distinct $i_0$ and $j_0$. Then
$q_{k,{j_0}}(a_1,\dots,a_{k+1}) = 1$, so 
$q_{k,{j_0}}(a_1,\dots,a_{k+1})\ra\op = \op$, and thus
$$
\alg{A}\models
\{q_{k,i}(a_1,\dots,a_{k+1})\ra \op:1\leq i\leq k+1\}\ra \op = 1,
$$
contradicting~\eqref{eq:fail-q}.
We conclude that $\{a_1,\dots,a_{k+1}\}$ is an antichain.
Therefore, by Theorem~\ref{thm:PrimeFilter},
for each $i\in\{1,\dots,k+1\}$ there exists
$\mu_i\in \Cm{\alg{A}}$ such that $a_i\in 1/\mu_i$ and
$a_j\notin 1/\mu_i$ for all $j\neq i$. Hence,
$\{\mu_i: 1\leq i\leq k+1\}\subseteq\Cm{\alg{A}}$ is an antichain,
and since $\alg{A}$ is subdirectly irreducible, the identity congruence
belongs to $\Cm{\alg{A}}$, so $w(\alg{A}) \deq k+1$ contradicting the supposition.

Conversely, assume $\alg{A}$ satisfies~\eqref{eq:Bk}, yet $w(\alg{A})> k$.  
Assume without loss that $\alg{A}$ is subdirectly irreducible,
and $w(\alg{A}) = k+1$. Let $\mu_1,\dots,\mu_{k+1}\in\cm{\alg{A}}$ be an
antichain with a common lower bound. By CD we get
$\bigcap\{\mu_j: 1\leq j\leq k+1, \ j\neq i\} \not\subseteq \mu_i$,
for every $i\in \{1,\dots,k+1\}$. Therefore,
for each $i$ there is $a_i\in
\bigcap\{\mu_j: 1\leq j\leq k+1, \ j\neq i\}\smallsetminus \mu_i$. We claim that 
$a_1,\dots,a_{k+1}$ is an antichain. To see that,
suppose $a_i\ra a_s = 1$ for some $i\neq s$.
Then $a_i$ and $a_i\ra a_s$ both belong to
$\bigcap\{\mu_j: 1\leq j\leq k+1, \ j\neq i\}$, and so
$a_s\in \bigcap\{\mu_j: 1\leq j\leq k+1, \ j\neq i\}\subseteq \mu_s$.
Hence $a_s\in 1/\mu_s$ contradicting the choice of $a_s$.
Let $\alg{B}$ be the subalgebra of $\alg{A}$  generated by
$\{\op,a_1,\dots,a_{k+1}\}$. Clearly, $\alg{B}$ is finite and subdirectly
irreducible. Now, using Theorem~\ref{thm:PrimeFilter} 
we find $\nu_1,\dots,\nu_{k+1}\in \cm{\alg{B}}$ such that
$\{a_1,\dots,a_{k+1}\}\cap 1/\nu_i = \{a_i\}$. 
Hence $\nu_1,\dots,\nu_{k+1}$ is an antichain. 
We conclude that $\alg{B}$ is finite subdirectly irreducible and
$w(\alg{B}) = k+1$. 

By congruence distributivity and finiteness we have that
$\J{\Con{\alg{B}}}$ and $\Cm{\alg{B}}$ are isomorphic.
Hence, the greatest length of any antichain in $\J{\Con{\alg{B}}}$ is 
$k+1$, and by Theorem~\ref{thm:irreducible-charact} any such antichain of length
$k+1$ in $\J{\Con{\alg{B}}}$ consists of congruences 
$\theta(1,b_1),\dots,\theta(1,b_{k+1})$ such that 
$\{b_i: 1\leq i\leq k+1\}$ is an antichain of irreducible elements. Therefore
$q_{k,i}(b_1,\dots,b_{k+1}) = b_i<\op$, so 
$q_{k,i}(b_1,\dots,b_{k+1})\ra \op = 1$ for each $i\in\{1,\dots,k+1\}$. Hence
$$
\{q_{k,i}(b_1,\dots,b_{k+1})\ra \op:1\leq i\leq k+1\}\ra \op = \op,
$$
contradicting the assumption that $\alg{A}$ satisfies~\eqref{eq:Bk}.
\end{proof}

Consider any subvariety $\mathcal{V}$ of $\var{Hi}$ or
of $\var{Ho}$. 
Let $\mathcal{R}_k\deq \mathcal{V}\cap \mathrm{Mod}(B_k)$. 
Construction of the poset $\Cm{\alg{F}_{\mathcal{R}_k}(n)}$ for the $n$-generated free
$\mathcal{R}_k$-algebra proceeds as follows.

\begin{lemma}\label{lem:bounded-width}
Let $k\geq 1$. We build $P_1(n)$ exactly as for $\alg{F}_{\var{Hi}}(n)$
or for $\alg{F}_{\var{Ho}}(n)$, depending on whether\/ $0$ is in the signature.   
For $k\geq 2$, and for any $G\in \mathrm{Up}\left(\bigcup_{r=1}^{k-1}P_r(n)\right)$
such that any antichain in $G$ has at most $k$ elements and
$G\cap P_{k-1}(n)\neq \emptyset$, and for any $L$ such that
$L\subsetneq X\cap 1/\bigcap G$, 
let $f^G_L\colon X \to  \bigl(\FV(n)/\bigcap G\bigr)^\oplus$ be the map
given by 
$$
f^G_L(x) =\begin{cases}
  1 & \text{ if } x\in L,\\
  \op & \text{ if } x\in \left(1/\bigcap G\right)\smallsetminus L,\\
  x/\bigcap G & \text{ if } x\notin 1/\bigcap G.
\end{cases} 
$$
Let $\ov{f}^G_L$ be the homomorphism uniquely extending $f^G_L$ and
let $\mu^G_L = \ker\ov{f}^G_L$. Then, writing
$w(G)\leq k$ to abbreviate ``any antichain in $G$ has at most $k$ elements and
$G\cap P_{k-1}(n)\neq \emptyset$'', we obtain
$$
P_k(n) =
\left\{\mu^G_L: G\in \mathrm{Up}\left(\bigcup_{r=1}^{k-1}P_r(n)\right),\ 
w(G)\leq k,\ L\subsetneq X\cap 1/\bigcap G  
\right\}.
$$
\end{lemma}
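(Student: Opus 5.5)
The plan is to reduce the statement to Lemma~\ref{lem:layers-gen} with $\mathcal{V}$ replaced by $\mathcal{R}_k$. By that lemma, $P_k(n)$ consists of the kernels $\mu^G_L$ where $G$ ranges over upsets of $\bigcup_{r<k}P_r(n)$ meeting $P_{k-1}(n)$, $L\subsetneq X\cap 1/\bigcap G$, and the side condition $\bigl(\alg{F}_{\mathcal{R}_k}(n)/\bigcap G\bigr)^\oplus\in\mathcal{R}_k$ holds. The first layer is $P_1(n)$ as in Lemma~\ref{lem:P1} or Lemma~\ref{lem:P1-with-0}, because $\alg{2}$ has width $1$ and so lies in every nontrivial $\mathcal{R}_k$. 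It therefore suffices to show that this side condition is equivalent to ``every antichain in $G$ has at most $k$ elements''. The condition $\bigl(\FV(n)/\bigcap G\bigr)^\oplus\in\mathcal{V}$ is assumed inherited from the ambient $\mathcal{V}$, so only membership in $\mathrm{Mod}(B_k)$ needs checking.

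Put $\gamma=\bigcap G$ and $\alg{C}=\alg{F}_{\mathcal{R}_k}(n)/\gamma$. The completely meet-irreducible congruences of $\alg{C}$ are exactly the $\mu\in\cm{\alg{F}_{\mathcal{R}_k}(n)}$ with $\mu\supseteq\gamma$, so $\Cm{\alg{C}}\cong\up{\gamma}\cap\cm{\alg{F}_{\mathcal{R}_k}(n)}$. I will check that this set equals $G$. Every $\mu\supseteq\gamma=\bigcap G$ lies above some element of $G$ by complete meet-irreducibility in the finite congruence lattice; since $G$ is an upset, $\mu\in G$. Hence $\Cm{\alg{C}}\cong G$.

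Next, compare $\Cm{\alg{C}^\oplus}$ with $\Cm{\alg{C}}$. By Theorem~\ref{thm:Hi-si}, $\alg{C}\cong\alg{C}^\oplus/\mu$ with $\mu$ the monolith. The congruence lattice of $\alg{C}^\oplus$ is therefore $\Con{\alg{C}}$ with a new bottom (identity) added below $\mu$. Consequently $\Cm{\alg{C}^\oplus}$ is $G$ together with a new least element, the identity, which is completely meet-irreducible because $\alg{C}^\oplus$ is subdirectly irreducible. In this poset every antichain has a common lower bound, namely the identity. By the width theorem preceding the lemma, $\alg{C}^\oplus\in\var{Hb}_k$ if and only if every antichain of $\Cm{\alg{C}^\oplus}$ has at most $k$ elements. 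The new bottom is comparable to everything, so this holds if and only if every antichain in $G$ has at most $k$ elements, which is the required equivalence. Combining this with the main part of Lemma~\ref{lem:layers-gen} gives the displayed description of $P_k(n)$.

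The step I expect to be the main obstacle is the identification of the congruence lattice of $\alg{C}^\oplus$ as $\Con{\alg{C}}$ with a new bottom. Filters of $\alg{C}^\oplus$ are $\{1\}$ and sets $F\cup\{1\}$ with $F$ a filter of $\alg{C}$, where $\op$ plays the role of $1$. I would verify this via the filter/congruence correspondence, together with the observation that any filter of $\alg{C}^\oplus$ containing some $a\neq 1$ contains $\op\geq a$. A second point to watch is that $\mathcal{R}_k$ is determined by its finite subdirectly irreducibles, so using $\alg{C}^\oplus\in\mathcal{R}_k$ as the criterion, exactly as in Lemma~\ref{lem:layers-gen}, is legitimate.
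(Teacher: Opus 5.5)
Your reduction is correct and matches what the paper intends. The paper gives no separate proof: the lemma is Lemma~\ref{lem:layers-gen} applied to $\mathcal{R}_k$, with the side condition $\bigl(\FV(n)/\bigcap G\bigr)^\oplus\in\mathcal{R}_k$ read off from the width characterisation, just as in the bounded-height case. One justification needs tightening: the claim that every completely meet-irreducible $\mu\supseteq\bigcap G$ lies above some member of $G$ requires $\mu$ to be meet-prime, which comes from congruence distributivity of $\var{Hi}$; meet-irreducibility in a finite lattice alone does not give it.
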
  

Having constructed $\Cm{\alg{F}_{\mathcal{R}_k}(n)}$, we simply
apply Theorem~\ref{thm:free-Hilbert-alg} as it stands. Similarly,
Theorem~\ref{thm:free-Br-semi} applies to varieties of Brouwerian semilattices
of bounded width. 

\subsubsection{The linear case: free spectra}\label{ssec:width-one}
 
The narrowest nontrivial case, $k=1$, deserves a separate treatment, since we
can get a closed formula for the free spectrum. Recall that 
$\alg{A}\in\var{Hb}_1$ if and only if
$\alg{A}\models
((x_1\ra x_2)\ra y) \ra \bigl(((x_2\ra x_1)\ra y) \ra y\bigr) = 1$
if and only if every antichain in $\cm{\alg{A}}$ with a
common lower 
bound is a singleton, that is, $\cm{\alg{A}}$ is a forest of chains. On the
logic side the variety $\var{Hb}_1$ corresponds to the G\"odel--Dummett logic
$\mathsf{LC}$.
Let $a(n)$ be the $n$-th ordered Bell (or Fubini)
number: $a(n)=\sum_{j}\mathrm{Surj}(n,j)$, where
$\mathrm{Surj}(n,j)=j!\begin{Bmatrix}n\\ j\end{Bmatrix}$
counts surjections from an $n$-element set onto a $j$-element set
and $\begin{Bmatrix}n\\ j\end{Bmatrix}$ is
a Stirling number of the second kind.

As already noted in Słomczyńska~\cite{Slo05}, the poset of completely
meet-irreducible congruences is \emph{the same} for the linear varieties of
equivalential algebras, Hilbert algebras and Brouwerian semilattices.
The structure of $\cm{\alg{F}_{\var{Hb}_1}(n)}$ 
was determined\footnote{In the context of
  equivalential algebras.} in~\cite{Slo05}. Here is the
main fact we will need. 
\begin{equation}\label{eq:w1-poset}\tag{\ddag}
|\cm{\alg{F}_{\var{Hb}_1}(n)}| =
\sum_{k=1}^{n} k!\begin{Bmatrix}n+1\\ k+1\end{Bmatrix} = 2a(n)-1.
\end{equation}
We do not reproduce the argument; we will focus instead on the
free spectrum of $\var{Hb_1}$. Note that the free spectrum
is different for equivalential algebras, Brouwerian semilattices and Hilbert
algebras, in spite of the underlying poset being the same, so our results
here are independent of the results from~\cite{Slo05}.
For $\var{Hi}$, in particular for $\var{Hb_1}$, the free spectrum is governed
by antichain selection function $S(x)$.   
Recall from Theorem~\ref{thm:free-Hilbert-alg} that the universe of 
$\alg{F}_{\var{Hb}_1}(n)$ is $\bigcup_{i=1}^{n}\pw(S(x_i))$,
where
\[
  S(x)=\{\mu\in \cm\alg{F}_{\var{Hb}_1}(n):
  x\notin 1/\mu \text{ and } x\in 1/\nu \text{ for all }
  \nu\supsetneq\mu\}. 
\] 
Thus $\mu\in S(x)$ if and only if $x$ is identified with $1$ by all congruences
strictly above $\mu$ but not by $\mu$; equivalently, $x/\mu = \op$.

\begin{lemma}\label{lem:width1-S}
Consider $\alg{F}_{\var{Hb_1}}(n)$ with the set $X$ of $n\geq 2$ free generators.
For any distinct $x_{i_1},\dots, x_{i_r}\in X$ we have the following
\begin{enumerate}
\item if $r<n$, then $\bigl|\bigcap_{t=1}^{r}S(x_{i_t})\bigr|= 4a(n-r)-1$,
\item if $r = n$, then $\bigl|\bigcap_{t=1}^{r}S(x_{i_t})\bigr|= 1$.
\end{enumerate}  
\end{lemma}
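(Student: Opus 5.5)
The plan is to make the poset $\Cm{\alg{F}_{\var{Hb}_1}(n)}$ completely explicit as a set of combinatorial codes, identify $S(x)$ inside it, and then count. In $\var{Hb}_1$ every antichain of $\Cm{\alg{A}}$ with a common lower bound is a singleton. Hence for every $\mu$ the upset $G=\{\nu:\nu\supsetneq\mu\}$ occurring in Lemma~\ref{lem:layers-gen} (equivalently in Lemma~\ref{lem:bounded-width} with $k=1$) is a chain.

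\textbf{Step 1: coding the congruences.} Running the layer construction, I will show that $\mu\in P_k(n)$ is determined by, and determines, a tuple $(B_1,\dots,B_k;L)$. Here $B_1,\dots,B_k$ are pairwise disjoint nonempty subsets of $X$, $L\subseteq X$ is disjoint from them, and $B_1\cup\dots\cup B_k\cup L=X$. The set $B_j$ collects the generators sent to $\op$ at the $j$-th level of the chain above $\mu$, and $L=X\cap 1/\mu$.
\begin{itemize}
\item At level $1$ this is Lemma~\ref{lem:P1}, with $B_1=X\smallsetminus J\neq\emptyset$.
\item At level $k$ the chain $G$ is forced, and the free choice $L\subsetneq X\cap 1/\bigcap G$ produces the new nonempty block $B_k=(X\cap 1/\bigcap G)\smallsetminus L$.
\end{itemize}
Conversely, every such tuple arises, because every masted chain lies in $\var{Hb}_1$. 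As a sanity check I will verify that the number of codes is $\sum_{j\ge1}\binom{n}{j}a(j)=2a(n)-1$, in agreement with \eqref{eq:w1-poset}. This uses the standard identity $\sum_{j=0}^m\binom{m}{j}a(j)=2a(m)$ for $m\ge1$, which follows from the recurrence $a(m)=\sum_{j<m}\binom{m}{j}a(j)$.

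\textbf{Step 2: identifying $S(x)$.} By the description of $S(x)$ recalled just before the lemma, $\mu\in S(x)$ if and only if $x/\mu=\op$. In the coding this says $x\in B_k$, the last block. So $\bigcap_{t}S(x_{i_t})$ consists of exactly those codes whose last block contains $R=\{x_{i_1},\dots,x_{i_r}\}$.
\begin{itemize}
\item If $r=n$, then $B_k=X$, which forces $k=1$ and $L=\emptyset$. This gives exactly one congruence, namely $\mu_\emptyset\in P_1(n)$, which proves (2).
\item If $r<n$, put $Y=X\smallsetminus R$ and $m=n-r\ge1$. A code is obtained by choosing $W\subseteq Y$, an ordered partition $(B_1,\dots,B_{k-1})$ of $W$ (possibly empty), and a split of $Y\smallsetminus W$ into $L$ and $B_k\smallsetminus R$. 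The count is
$$\sum_{j=0}^m\binom{m}{j}2^{m-j}a(j).$$
\end{itemize}

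\textbf{Step 3: the count.} Expand $2^{m-j}=\sum_i\binom{m-j}{i}$ and regroup by $l=j+i$. This rewrites the sum as $\sum_l\binom{m}{l}b(l)$, where $b(l)=\sum_j\binom{l}{j}a(j)=2a(l)-[l=0]$. Applying the identity once more gives $2\cdot 2a(m)-1=4a(m)-1$, which proves (1).

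The main obstacle is Step 1, specifically the bijectivity of the coding. One must check that distinct tuples give distinct kernels and that the chain $G$ above $\mu$ is recovered from $(B_1,\dots,B_{k-1})$ by induction. I would argue this directly from properties (i)--(iii) in the proof of Lemma~\ref{lem:layers}, using $(\mu^G_L)^+=\bigcap G$ and $L=X\cap 1/\mu^G_L$.
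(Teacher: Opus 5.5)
Your proof is correct, and its core matches the paper's: an element of $\bigcap_t S(x_{i_t})\cap P_h(n)$ is the kernel of a homomorphism onto a subdirectly irreducible chain sending $R=\{x_{i_1},\dots,x_{i_r}\}$ to $\op$. Your code $(B_1,\dots,B_k;L)$ is just the list of fibres of that map on $X$.

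You differ from the paper in two places.

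First, in how the identification is justified. The paper obtains it directly: subdirectly irreducible members of $\var{Hb}_1$ are chains, and the subalgebra of a chain generated by a set $Z$ is $Z\cup\{1\}$. You instead derive it from the layer construction of Lemma~\ref{lem:layers-gen}. This makes bijectivity explicit, and as a by-product it proves \eqref{eq:w1-poset}, which the paper imports from~\cite{Slo05}.

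Second, in how the count is decomposed.
\begin{itemize}
\item The paper fixes the layer $h$ and splits according to whether some generator outside $R$ is sent to $1$, and whether some is sent to $\op$. In your coding (with $k=h$) this asks whether $L$ and $B_k\smallsetminus R$ are nonempty. This gives $\mathrm{Surj}(m,h-1)+2\,\mathrm{Surj}(m,h)+\mathrm{Surj}(m,h+1)$ per layer. Summing over $h$ yields $a(m)+2a(m)+(a(m)-1)$ at once, with no auxiliary identity.
\item You ignore layers and sum over the set $W$ of generators in $X\smallsetminus R$ sent strictly below $\op$. This leads to $\sum_j\binom{m}{j}2^{m-j}a(j)$. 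It then needs the regrouping $\binom{m}{j}\binom{m-j}{l-j}=\binom{m}{l}\binom{l}{j}$ and two applications of $\sum_j\binom{l}{j}a(j)=2a(l)-[l=0]$.
\end{itemize}

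Both routes are sound. The paper's decomposition is shorter; yours is more self-contained and recovers the size of the whole poset along the way.
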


\begin{proof}
For (1), let $x_{i_1},\dots, x_{i_r}\in X$ be distinct.  
Observe that $\bigl|\bigcap_{t=1}^{r}S(x_{i_t})\bigr| =
\sum_{h=1}^n\bigl|\bigcap_{t=1}^{r}S(x_{i_t})\cap P_h(n)\bigr|$.
Fix $h$, and recall that
an element $\mu\in\bigcap_{t=1}^{r}S(x_{i_t})\cap P_h(n)$ is the kernel of
a surjective homomorphism $\overline{f}$ extending the map $f$ from $X$ to
a linearly ordered algebra $\alg{L}\in\var{Hi_1}$ with
$|L| = h+1$ such that
$f(x_{i_1}) = \dots = f(x_{i_r}) = \op$ and 
$L\smallsetminus\{1,\op\}\subseteq f(X)$.
This leads to three possibilities: (i) $f$ is a surjection from
$X\smallsetminus\{x_{i_1},\dots,x_{i_r}\}$ onto a $(h-1)$-element chain
(that is, $f^{-1}(\{1,\op\})=\{x_{i_1},\dots,x_{i_r}\}$), (ii) $f$ is a surjection onto
a chain of length $h$ (that is, $f^{-1}(1)=\emptyset$ or
$f^{-1}(\op)=\{x_{i_1},\dots,x_{i_r}\}$),
(iii) $f|_{X\smallsetminus\{x_{i_1},\dots,x_{i_r}\}}$ is a surjection onto a chain of length
$h+1$. Thus, we get 
\begin{align*}
\bigl|\bigcap_{t=1}^{r}S(x_{i_t})\cap P_h(n)\bigr|
&= \mathrm{Surj}(n-r,h-1)+ 2\,\mathrm{Surj}(n-r,h) + 
\mathrm{Surj}(n-r,h+1)\\
&= (h-1)!\begin{Bmatrix}n-r\\ h-1\end{Bmatrix} +
2(h!)\begin{Bmatrix}n-r\\ h\end{Bmatrix} +
  (h+1)!\begin{Bmatrix}n-r\\ h+1\end{Bmatrix}
\end{align*}
and hence, using the recursive formula for Stirling numbers and the definition
of Fubini numbers we obtain
\begin{align*}
\bigl|\bigcap_{t=1}^{r}S(x_{i_t})\cap P_h(n)\bigr|
 &=
\sum_{h=1}^n(h-1)!\left(\begin{Bmatrix}n-r+2\\ h+1\end{Bmatrix} -
   \begin{Bmatrix}n-r+1\\ h+1\end{Bmatrix}\right)\\
 &= 4 a(n-r) - 1.
\end{align*}
Now, (2) is clear since $\bigcap_{t=1}^{n}S(x_{i_t})$ is a singleton. 
\end{proof}

Now we are ready to calculate the free spectrum of $\var{Hb_1}$. 

\begin{theorem}\label{thm:width1-spectrum}
For every $n\geq1$,
\[
  |\alg{F}_{\var{Hb}_1}(n)|
  = (-1)^{\,n-1}\left[\,2
    + \frac12\sum_{m=1}^{n-1}(-1)^{\,m}\binom{n}{m}\,16^{a(m)}\right].
\]
\end{theorem}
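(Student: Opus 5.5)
The plan is to count the universe of $\alg{F}_{\var{Hb}_1}(n)$ by inclusion--exclusion, in the same way as in the example computing $|\alg{F}_{\var{Hi}}(3)|$. By Theorem~\ref{thm:free-Hilbert-alg} this universe is $\bigcup_{i=1}^n\pw(S(x_i))$. The basic observation is that
\[
\bigcap_{i\in I}\pw(S(x_i))=\pw\Bigl(\bigcap_{i\in I}S(x_i)\Bigr)
\]
for any nonempty $I\subseteq\{1,\dots,n\}$. Inclusion--exclusion then gives
\[
|\alg{F}_{\var{Hb}_1}(n)|=\sum_{\emptyset\neq I\subseteq\{1,\dots,n\}}(-1)^{|I|-1}\,2^{|\bigcap_{i\in I}S(x_i)|}.
\]

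First I dispose of $n=1$ separately, since Lemma~\ref{lem:width1-S} assumes $n\geq 2$. By Lemma~\ref{lem:P1}, $P_1(1)$ is a singleton. Also $P_2(1)=\emptyset$, by the `furthermore' part of Lemma~\ref{lem:layers-gen}. Hence $S(x_1)$ is this single congruence and $|\alg{F}_{\var{Hb}_1}(1)|=2$. The right-hand side of the claimed formula is $(-1)^0[2+0]=2$, because the sum over $m$ is empty.

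For $n\geq 2$, the size of $\bigcap_{i\in I}S(x_i)$ depends only on $r=|I|$, by Lemma~\ref{lem:width1-S}. It equals $4a(n-r)-1$ when $r<n$, and $1$ when $r=n$. There are $\binom nr$ sets $I$ of size $r$, so
\[
|\alg{F}_{\var{Hb}_1}(n)|=\sum_{r=1}^{n-1}(-1)^{r-1}\binom nr 2^{4a(n-r)-1}+(-1)^{n-1}\cdot 2 .
\]

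The last step is reindexing by $m=n-r$, which runs from $1$ to $n-1$. We have $\binom nr=\binom nm$ and $2^{4a(m)-1}=\tfrac12\,16^{a(m)}$. The sign rewrites as $(-1)^{r-1}=(-1)^{n-m-1}=(-1)^{n-1}(-1)^m$. Factoring out $(-1)^{n-1}$ then yields exactly
\[
(-1)^{n-1}\Bigl[\,2+\tfrac12\sum_{m=1}^{n-1}(-1)^m\binom nm 16^{a(m)}\Bigr].
\]

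The combinatorial content is entirely in Lemma~\ref{lem:width1-S}, which I take as given. The remaining points to watch are minor: the identity $\bigcap\pw=\pw\bigcap$, which is immediate, and the separate handling of the $r=n$ term and of the case $n=1$.
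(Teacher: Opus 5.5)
Your proposal is correct and matches the paper's proof: you use inclusion--exclusion over $\bigcup_{i=1}^n\pw(S(x_i))$ with $\pw(A)\cap\pw(B)=\pw(A\cap B)$, take the intersection sizes from Lemma~\ref{lem:width1-S}, and reindex by $m=n-r$. Your separate check of $n=1$ is a small improvement, since the paper applies Lemma~\ref{lem:width1-S}, which is stated only for $n\geq 2$, without comment.
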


\begin{proof}
By Theorem~\ref{thm:free-Hilbert-alg} the underlying set is
$\bigcup_{i=1}^{n}\pw(S(x_i))$. Inclusion--exclusion with
$\pw(A)\cap\pw(B)=\pw(A\cap B)$ gives
\[
  |\alg{F}_{\var{Hb}_1}(n)|
  =\sum_{r=1}^{n}(-1)^{r-1}\binom{n}{r}
\pw(S(x_1)\cap\dots\cap S(x_r)).
\]
Consequently, by Lemma~\ref{lem:width1-S}
$$
|\alg{F}_{\var{Hb}_1}(n)| = 
2(-1)^{n-1}+ \frac{1}{2}\sum_{r=1}^{n-1}(-1)^{r-1}\binom{n}{r}\,16^{a(n-r)}
$$
Substituting $m=n-r$, we immediately obtain
the final formula.
\end{proof}

\begin{example}\label{ex:width1-values}
\[
\renewcommand{\arraystretch}{1.2}
\begin{array}{c|c|l}
 n & |\cm{\alg{F}_{\var{Hb}_1}(n)}| & |\alg{F}_{\var{Hb}_1}(n)| \\\hline
 1 & 1    & 2 \\
 2 & 5    & 14 \\
 3 & 25   & 6\,122 \\
 4 & 149  & 9\,007\,199\,254\,728\,734 \\
 5 & 1081 & \approx 5.09\times10^{90} \\
 6 & 9365 & > 10^{652}
\end{array}
\]
For $n=2$ the algebra still has width one, so
$|\alg{F}_{\var{Hb}_1}(2)|=14=|\alg{F}_{\var{Hi}}(2)|$ of
Example~\ref{ex:free-Hi2}. The value $|\alg{F}_{\var{Hb}_1}(4)|$ sits just below
$2^{53}$.
\end{example}




\subsection{Structural completeness}
Structural completeness was first isolated as a property of a logic
in~\cite{Pog71}. Algebraically (cf.~\cite{Ber91}), a variety $\mathcal{K}$ is 
structurally complete if all quasi-equations true in the free 
$\mathcal{K}$-algebra $\alg{F}_\mathcal{K}(\omega)$ are true in $\mathcal{K}$,
and $\mathcal{K}$ is \emph{hereditarily structurally complete}  
if all its subvarieties are structurally complete.

It is well known that $\var{Hi}$, $\var{Br}$, and $\var{Bo}$,
are hereditarily structurally complete. For $\var{Hi}$
it was proved by Prucnal~\cite{Pru72} by a syntactic method, involving a clever
substitution that is now commonly called \emph{Prucnal's trick}. This result
was extended to $\var{Br}$ and $\var{Bo}$ by  Minari and Wroński~\cite{MW88}.
In contrast, $\var{Ho}$ is not structurally
complete, as shown by Wro\'nski~\cite{Wro86}.

Recall that for any terms $t_1, \dots, t_k$ and $r$ in
$\mathrm{Term}_{\{\ra,0\}}(n)$ or $\mathrm{Term}_{\{\ra,\wedge,0\}}(n)$,
we write $\{t_1,\dots, t_k\}\ra r$ for 
$t_{1}\ra(t_{2}\ra\dots \ra(t_{k}\ra r)\dots)$. We further
define $\neg t \deq t\ra 0$, and write 
$\neg\{t_1,\dots, t_k\}$ for $\{t_1,\dots, t_k\}\ra 0$.

Cintula and Metcalfe~\cite{CM10} found a quasi-equational base
for quasi-equations satisfied by the free algebra $\alg{F}_{\mathcal{V}}(\omega)$
in any subvariety $\mathcal{V}$ of $\var{Ho}$.  
It consists of quasi-identities 
\begin{equation}\label{eq:quasi-id}\tag{$W_n'$}
\neg\{x_1, x_2, \dots, x_n\} = 1 \doublewedge\bigdoublewedge_{i=1}^n
(\neg\neg x_i\ra x_i)\ra x_{n+1} = 1  \implies x_{n+1} = 1
\end{equation}  
for each $n<\omega$. By convention we take
($W_0'$) to be $x_{1} = 1\implies x_{1} = 1$;
next ($W_1'$) is equivalent to $x_2 = 1 \implies x_2 = 1$, which again holds
trivially. The first nontrivial case is 
the original Wro\'nski's quasi-equation ($W_2'$).
For each $n<\omega$ define the poset
$\mathbb{W}_n$ putting
$\mathbb{W}_n = \{b_i:1\leq i\leq n\}\cup\{a_i:1\leq i\leq n\}\cup\{\bot\}$
and ordering it by the reflexive transitive closure
of $\{(\bot, a_i): i\leq n\}\cup \{(a_i,b_j): i\neq j\}$.
Then by direct verification we get that
($W_m'$) holds in the algebra $\alg{Ac}(\mathbb{W}_n)$ if and only if
$m<n$. The reader may compare our $\mathbb{W}_n$ to Lemma~5.1 of~\cite{CM10}.

\begin{cor}
Let $\mathcal{V}\subseteq\var{Ho}$. If $\alg{Ac}(\mathbb{W}_n)\in \mathcal{V}$
for some $n\geq 2$, then $\mathcal{V}$ is not structurally complete.  
\end{cor}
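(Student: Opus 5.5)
The plan is to exhibit a quasi-equation that holds in the free algebra $\FV(\omega)$ but fails somewhere in $\mathcal{V}$. The natural candidate is $(W_n')$ itself. By the result of Cintula and Metcalfe recalled above, the quasi-identities $(W_m')$, $m<\omega$, axiomatise the quasi-equational theory of $\alg{F}_{\mathcal{V}}(\omega)$ for any $\mathcal{V}\subseteq\var{Ho}$. In particular $\alg{F}_{\mathcal{V}}(\omega)\models (W_n')$. By the direct verification stated just before the corollary, $(W_m')$ holds in $\alg{Ac}(\mathbb{W}_n)$ exactly when $m<n$, so $(W_n')$ fails in $\alg{Ac}(\mathbb{W}_n)$. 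Since $\alg{Ac}(\mathbb{W}_n)\in\mathcal{V}$ by hypothesis, $(W_n')$ is a quasi-equation true in $\alg{F}_{\mathcal{V}}(\omega)$ but not in $\mathcal{V}$. Hence $\mathcal{V}$ is not structurally complete.

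The key steps, in order, are these.

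\begin{enumerate}
\item Fix how $\alg{Ac}(\mathbb{W}_n)$ is regarded as a member of $\var{Ho}$. Here $1$ is $\emptyset$, and $0$ is the antichain of maximal elements of $\mathbb{W}_n$, namely $\{b_1,\dots,b_n\}$, which is the $\preccurlyeq$-least antichain. Check that $0\ra C=\emptyset$ for every antichain $C$: each $c\in C$ lies below some $b_j$, so $\up{c}\cap\{b_1,\dots,b_n\}\neq\emptyset$.
\item Invoke the Cintula--Metcalfe theorem to get $\alg{F}_{\mathcal{V}}(\omega)\models(W_n')$.
\item Exhibit the failing assignment. Send $x_i\mapsto\{a_i\}$ for $i\le n$ and $x_{n+1}\mapsto\{\bot\}$.
\end{enumerate}

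For step 3 there are three things to check.
\begin{itemize}
\item $\neg\{a_i\}=\{b_i\}$, because $a_i<b_j$ exactly for $j\neq i$. Hence $\neg\neg\{a_i\}=\{b_j:j\neq i\}\ra\dots$ behaves so that $\neg\neg x_i\ra x_i=\{a_i\}$.
\item Iterating, $\neg\{x_1,\dots,x_n\}=\emptyset$, since every $b_j$ lies above some $a_i$ once $n\ge2$.
\item $\{a_1,\dots,a_n\}\ra\{\bot\}=\emptyset$, since $\bot$ lies below every $a_i$. So the premises equal $1$ while $x_{n+1}=\{\bot\}\neq\emptyset$.
\end{itemize}

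The main obstacle is step 2. I must confirm that the Cintula--Metcalfe basis really applies to every subvariety $\mathcal{V}\subseteq\var{Ho}$, so that $(W_n')$ holds in $\alg{F}_{\mathcal{V}}(\omega)$ regardless of $\mathcal{V}$. The paper states this, so I will cite it.

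If that dependence is a concern, I would argue directly instead. In $\alg{F}_{\mathcal{V}}(\omega)$, suppose terms $t_1,\dots,t_n$ satisfy $\neg\{t_1,\dots,t_n\}=1$. Then, by the representation via $S$ and $P_1$, no $\mu\in P_1$ sends all $t_i$ to $1$. Prucnal-style substitutions then force $\{\neg\neg t_i\ra t_i\}\ra s=1$ to yield $s=1$. This is exactly the content of the cited result, so I will rely on the citation rather than reprove it.
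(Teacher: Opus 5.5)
Your skeleton matches what the paper intends. $(W_n')$ holds in $\FV(\omega)$, either by Cintula--Metcalfe or directly from Lemma~\ref{lem:1-holds-on-free}. The latter settles your ``main obstacle'' for every $\mathcal{V}\subseteq\var{Ho}$, since the values of finitely many variables in $\FV(\omega)$ lie in a subalgebra isomorphic to some $\FV(m)$. Meanwhile $(W_n')$ fails in $\alg{Ac}(\mathbb{W}_n)\in\mathcal{V}$. The paper only asserts this failure ``by direct verification'', so your step~3 is the real content, and as written it does not work.

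\textbf{The error in step~3.} In your first two bullets you reversed the order in Definition~\ref{def:ant-order}. There $C\ra B$ deletes those $b\in B$ with $\up{b}\cap C\neq\emptyset$, that is, the $b$ lying \emph{below} (or equal to) some element of $C$. You used this direction correctly in step~1 and in your third bullet. No $b_j$ lies below $a_i$, so $\{a_i\}\ra\{b_1,\dots,b_n\}=\{b_1,\dots,b_n\}$, that is, $\neg\{a_i\}=0$. Equivalently, under the isomorphism $\varphi$ of Lemma~\ref{lem:antichain-alg}, $\{a_i\}$ is the dense upset $\mathbb{W}_n\smallsetminus\{\bot,a_i\}$. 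Consequently $\neg\{x_1,\dots,x_n\}$ evaluates to $0\neq\emptyset$. The first premise of $(W_n')$ is false, so your assignment refutes nothing. Your conclusion $\neg\neg x_i\ra x_i=\{a_i\}$ happens to be right, but only because $\neg\neg\{a_i\}=\neg 0=\emptyset$.

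\textbf{A working assignment.} Take $x_i\mapsto\{a_i,b_i\}$ for $i\le n$ and $x_{n+1}\mapsto\{\bot\}$, with $0=\{b_1,\dots,b_n\}$. Here $n\ge 2$ is needed so that no $a_i$ is maximal.
\begin{itemize}
\item $\neg\{a_i,b_i\}=\{b_j:j\neq i\}$, so iterating gives $\neg\{x_1,\dots,x_n\}=\emptyset$.
\item $\neg\neg\{a_i,b_i\}=\{b_i\}$, and $\{b_i\}\ra\{a_i,b_i\}=\{a_i\}$ because $a_i\not\leq b_i$.
\item $\{a_i\}\ra\{\bot\}=\emptyset$ because $\bot<a_i$.
\end{itemize}
Thus every premise of $(W_n')$ evaluates to $1=\emptyset$, while $x_{n+1}=\{\bot\}\neq\emptyset$. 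With this witness in place of yours, the proof goes through.
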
  

Our construction of the free algebras lets us immediately show that
all they satisfy~\eqref{eq:quasi-id}.

\begin{lemma}\label{lem:1-holds-on-free}
Let $\mathcal{V}\subseteq \mathsf{Ho}$. Then~\eqref{eq:quasi-id} holds in
$\FV(m)$ for any $n, m<\omega$. 
\end{lemma}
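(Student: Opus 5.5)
The plan is to argue semantically through the representation of Theorem~\ref{thm:free-Hi0-alg} and the layer description of $\Cm{\FV(m)}$ (Lemma~\ref{lem:P1-with-0} together with Lemma~\ref{lem:layers-gen}), rather than syntactically via Prucnal's trick. Fix terms $t_1,\dots,t_n,s$, viewed as elements of $\FV(m)$, and assume both premises of~\eqref{eq:quasi-id} hold in $\FV(m)$ while $s\neq 1$. Every $\mu\in\cm{\FV(m)}$ gives a subdirectly irreducible quotient $\FV(m)/\mu$. The second premise shows that at every such $\mu$ with $s\notin 1/\mu$ some $\neg\neg t_i\ra t_i$ lies outside $1/\mu$. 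The first premise shows that at no $\mu$ do all of $t_1,\dots,t_n$ lie in $1/\mu$, since otherwise $0\in 1/\mu$.

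First I would observe that the claim holds on the top layer $P_1(m)$. There $\FV(m)/\mu\cong\alg{2}$, so $\neg\neg t_i\ra t_i\in 1/\mu$ for all $i$, and the second premise forces $s\in 1/\mu$. Consequently $s\ne 1$ is witnessed by some $\mu\in S(s)$ lying in a layer $P_k(m)$ with $k\geq 2$. I would pick such a $\mu$ of minimal height and put $G=\{\nu\in\cm{\FV(m)}:\nu\supsetneq\mu\}$. By choice of $\mu$, every $\nu\in G$ contains $(1,s)$, and every quotient by a member of $G$ satisfies the full premise pattern.

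The key step is a re-rooting argument using the freeness built into Lemma~\ref{lem:layers-gen}. The congruence $\mu$ equals $\mu^G_L$ for some $L\subsetneq X\cap 1/\bigcap G$, and every other admissible $L'$ yields another point $\mu^G_{L'}$ with the same upset $G$ above it; in particular the algebra $(\FV(m)/\bigcap G)^\oplus$ lies in $\mathcal{V}$, so all these points exist. Relative to $\mu^G_{L'}$, the truth value of each term at the new root is computed from its values on $G$ and from which generators are sent to $1$ or to $\op$. Hence varying $L'$ lets me steer the root. I would choose $L'$, by an induction on terms, so that at the root $t_i$ evaluates to $1$ exactly when $\neg\neg t_i$ does, while $s$ still evaluates to $\op$. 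Then all $\neg\neg t_i\ra t_i$ lie in $1/\mu^G_{L'}$ but $s$ does not, contradicting the second premise.

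I expect this choice of $L'$ to be the main obstacle: making the $t_i$ Boolean-like at the root tends to push the generators towards $1$, which can also make $s$ true. Here I would use the first premise, through $n\ge 2$ and the antichain shape of $\mathbb{W}_n$. At the root not all $t_i$ can be true, and each $\neg\neg t_i$ is decided already on $G$. I would then show that the only way to keep $s$ false forces a configuration of $t_i$'s above $\mu$ isomorphic to $\mathbb{W}_n$. That configuration cannot occur in the free algebra, because there all $t_i$ are terms in the same generators and $S(\cdot)$ is closed under subsets (Theorem~\ref{thm:transit}). If the direct choice of $L'$ fails, my fallback is the syntactic route: substitute, for each generator $x_j$, the term $(\neg\neg t\ra t)\ra x_j$ built from the premises, which is an endomorphism of $\FV(m)$, and verify that it fixes $s$ modulo the premises.
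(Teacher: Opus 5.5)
There is a genuine gap, and it starts with how you use the second premise. In \eqref{eq:quasi-id} that premise consists of $n$ separate equations $(\neg\neg x_i\ra x_i)\ra x_{n+1}=1$. So wherever $s$ fails, \emph{every} $\neg\neg t_i\ra t_i$ fails, not just some of them.

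Your plan never uses this strength. The top-layer step, the re-rooting, and the final contradiction (all $\neg\neg t_i\ra t_i$ true at the root but $s$ not) only invoke the weaker consequence $\{\neg\neg t_1\ra t_1,\dots,\neg\neg t_n\ra t_n\}\ra s=1$. Together with the first premise, that does not imply $s=1$. Take $\FV(1)$ with $\mathcal{V}=\var{Ho}$, $n=2$, $t_1=0$, $t_2=x_1$ and $s=\neg\neg x_1\ra x_1$. Then $\neg\{t_1,t_2\}=1$ and $\{\neg\neg t_1\ra t_1,\neg\neg t_2\ra t_2\}\ra s=1$. Yet $s\neq 1$: send $x_1$ to the middle element of the three-element chain.

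In this situation $\neg\neg t_2\ra t_2=s$, so no choice of $L'$ can make all $\neg\neg t_i\ra t_i$ true while $s$ stays at $\op$. The re-rooting step you flag as the main obstacle therefore cannot be completed from what you use. The appeal to $\mathbb{W}_n$ is asserted rather than argued. The Prucnal-style fallback does not say which $t$ to use or why $s$ would be fixed. Its generic form cannot work in $\var{Ho}$, which is not structurally complete, so it cannot stand in for the missing step.

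The missing idea is that the first premise singles out one $t_i$ that is already regular; a single instance of the second premise then finishes. The steps are as follows.
\begin{enumerate}
\item In the representation of Theorem~\ref{thm:free-Hi0-alg}, $S(\neg\{t_1,\dots,t_n\})=P_1(m)\smallsetminus\bigcup_i S(t_i)$. So the first premise gives $P_1(m)\subseteq\bigcup_i S(t_i)$.
\item The point $\mu_X\in P_1(m)$ (all generators sent to $1$) lies in no $S(x_j)$. Each $S(t_i)$ satisfies $S(t_i)\subseteq S(x_j)$ for some $j$, or $S(t_i)\subseteq S(0)=P_1(m)$.
\item Hence any $t_i$ with $\mu_X\in S(t_i)$ has $S(t_i)\subseteq P_1(m)$.
\item For that $i$ one computes $S(\neg\neg t_i)=S(t_i)$, so $\neg\neg t_i\ra t_i=1$.
\item The $i$-th premise $(\neg\neg t_i\ra t_i)\ra s=1$ then gives $s=1$ outright.
\end{enumerate}
Syntactically, $t_i(\overline{1})=0$ for some $i$, so that $t_i$ ends in $0$ and is a negation. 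No minimal counter-point or re-rooting is needed.
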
  

\begin{proof}
Consider a substitution $\sigma$ extending the map
$x_i\mapsto t_i$ for some $t_i\in\mathrm{Term}_{\{\ra,0\}}$, 
such that the antecedent of~\eqref{eq:quasi-id}  
holds in $\FV(m)$. Working in the representation from
Theorem~\ref{thm:free-Hi0-alg}, we calculate: 
\begin{align*}
\emptyset = S(1)
&= S(\neg\{t_1,\dots, t_n\})
   = S(t_1)\ra \dots\ra\bigl(S(t_n)\ra P_1(m)\bigr)\\
&= \bigl(\dots\bigl(P_1(m)\smallsetminus
  S(t_n)\bigr)\smallsetminus\dots\bigr)\smallsetminus S(t_1)
= P_1(m)\smallsetminus \bigcup_{i=1}^nS(t_i).  
\end{align*}
Hence $\bigcup_{i=1}^nS(t_i)\supseteq P_1(m)$. We claim that
$S(t_i)\subseteq P_1(m)$ for some $i\leq n$. For suppose the contrary.
Then, $\bigcup_{i=1}^nS(t_i)\subseteq \bigcup_{i=1}^{m}S(x_i)$,
and since $\mu_X\in P_1(m)\smallsetminus \bigcup_{i=1}^mS(x_i)$,
we obtain $P_1(m)\smallsetminus \bigcup_{i=1}^nS(t_i)\supseteq
P_1(m)\smallsetminus \bigcup_{i=1}^mS(x_i) \neq \emptyset$, a contradiction. 
Without loss of
generality, assume $S(t_1)\subseteq P_1(m)$. Then, straightforward calculations yield
$$
S(\neg\neg t_1\ra t_1) = S(t_1)\smallsetminus P_1(m) = \emptyset  
$$
and therefore 
$S\bigl((\neg\neg t_1\ra t_1)\ra t_{n+1}\bigr) =
\emptyset \ra S(t_{n+1}) = S(t_{n+1})$. 
By assumptions, however,
$S\bigl((\neg\neg t_1\ra t_1)\ra t_{n+1}\bigr) = \emptyset$,
so $S(t_{n+1}) = \emptyset$ as required.
\end{proof}

An analogue of Lemma~\ref{lem:1-holds-on-free} fails for
$\mathsf{Bo}$. The proof above breaks down, 
since it is not the case that $\bigcup_{i=1}^nS(t_i)\supseteq P_1(m)$
implies
$S(t_i)\subseteq P_1(m)$ for some $i\leq n$.

There is a
tight connection between structural completeness and \emph{projectivity},
discovered in Ghilardi~\cite{Ghi99}, of which we only need to recall the fact
that projectivity of algebras in a variety $\mathcal{V}$ implies hereditary
structural completeness of $\mathcal{V}$. Another consequence
of projectivity in our context is the existence of a natural translation of
quasi-equations to equations. We will approach it employing some tools 
from~\cite{Slo12}, which we will now recall and slightly modify or
extend.

\begin{lemma}[\cite{Slo12} Proposition 2.2]\label{lem:Prop2.6}
Let $\mathcal{V}$ be any variety, let $\alg{A}\in\mathcal{V}$ be projective
\textup(in particular, $\alg{A}$ can be free in $\mathcal{V}$\textup) and let
$\psi\in\Con{\alg{A}}$. The following are equivalent:
\begin{enumerate}
\item $\alg{A}/\psi$ is projective.
\item There exists an endomorphism $\tau\colon \alg{A}\to\alg{A}$ such that
  $\ker(\tau) = \psi$ and for every $a\in A$
  we have $\tau(a)\equiv_\psi a$.  
\end{enumerate}  
\end{lemma}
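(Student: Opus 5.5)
We have to prove the equivalence of (1) $\alg{A}/\psi$ projective and (2) existence of an endomorphism $\tau$ of $\alg{A}$ with $\ker(\tau)=\psi$ and $\tau(a)\equiv_\psi a$ for all $a$. The argument is the standard retraction argument. Write $\pi\colon\alg{A}\onto\alg{A}/\psi$ for the natural map throughout.

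For (1)$\Rightarrow$(2) we use projectivity of $\alg{A}/\psi$ against the surjection $\pi$. Since $\alg{A}/\psi$ is projective and $\pi$ is onto, the identity map on $\alg{A}/\psi$ lifts to a homomorphism $\sigma\colon\alg{A}/\psi\to\alg{A}$ with $\pi\circ\sigma=\mathrm{id}$. Put $\tau\deq\sigma\circ\pi$, an endomorphism of $\alg{A}$. Then $\pi(\tau(a))=\pi(\sigma(\pi(a)))=\pi(a)$, so $\tau(a)\equiv_\psi a$ for every $a$. For the kernel, $\sigma$ is injective since it has the left inverse $\pi$, so $\ker\tau=\ker\pi=\psi$.

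For (2)$\Rightarrow$(1) we realise $\alg{A}/\psi$ as a retract of $\alg{A}$. Since $\ker\tau=\psi$, the homomorphism theorem gives an embedding $\bar\tau\colon\alg{A}/\psi\to\alg{A}$ with $\bar\tau\circ\pi=\tau$. The condition $\tau(a)\equiv_\psi a$ says exactly that $\pi\circ\bar\tau(a/\psi)=\pi(\tau(a))=a/\psi$, so $\pi\circ\bar\tau=\mathrm{id}_{\alg{A}/\psi}$. Thus $\alg{A}/\psi$ is a retract of the projective algebra $\alg{A}$.

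It remains to check that retracts of projectives are projective, which is the only step needing an argument. Let $g\colon\alg{B}\onto\alg{C}$ be a surjection and $h\colon\alg{A}/\psi\to\alg{C}$ a homomorphism. Lift $h\circ\pi\colon\alg{A}\to\alg{C}$ through $g$ to some $k\colon\alg{A}\to\alg{B}$, using projectivity of $\alg{A}$. Then $k\circ\bar\tau$ satisfies
$$
g\circ k\circ\bar\tau=h\circ\pi\circ\bar\tau=h,
$$
so it is the required lift. No step is a real obstacle; the main care is the bookkeeping identifying $\ker\tau$ with $\psi$ in both directions. The parenthetical remark holds because free algebras are projective.
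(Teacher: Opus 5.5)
Your proof is correct and complete: splitting the natural surjection $\pi$ gives (1)$\Rightarrow$(2), and (2)$\Rightarrow$(1) follows because $\alg{A}/\psi$ is a retract of the projective algebra $\alg{A}$. The paper does not prove this lemma itself; it only cites Proposition~2.2 of \cite{Slo12}, and your retraction argument is the standard proof of that fact.
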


\begin{lemma}\label{lem:criteria}
Let $\mathcal{V}$ be a Fregean, subtractive variety of finite signature.
Assume moreover that for any subdirectly irreducible $\alg{A}\in\mathcal{V}$
with $|A|> 2$, the set $A\setminus\{\op\}$ is a subuniverse of $\alg{A}$.  
Let $\psi\in\Con\FV(n)$. If for each term $t\in 1/\psi$ we have
$t(\overline{1}) = 1$, where $\overline{1}$ is an appropriate constant sequence
with all terms equal to $1$, then $\FV(n)/\psi$ is projective.
\end{lemma}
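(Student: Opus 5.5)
We will verify condition (2) of Lemma~\ref{lem:Prop2.6} for $\alg{A}=\FV(n)$, which is projective because it is free. The endomorphism will come from a Prucnal-type substitution. Since $\mathcal{V}$ is subtractive and has finite signature, and $\FV(n)$ is finite, $1/\psi$ is a finitely generated filter. So $\psi=\theta(1,p)$ for a single term $p$: the join of finitely many $\theta(1,p_j)$, or more cheaply, any $p$ with $1/\psi$ equal to the filter generated by $p$, using finiteness and Fregeanity. We define $\tau$ on generators by
$$
\tau(x_i) \deq s\bigl(p, x_i\bigr) \text{-style term},
$$
that is, the term that behaves like ``$p\ra x_i$''. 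We choose it among the subtractive/Hilbert operations available by Theorem~\ref{thm:oprema} when $\mathcal{V}$ has EDPC. In general we use the subtractive term $s$, and we extend $\tau$ to the unique endomorphism.

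There are two properties to check. First, $\tau(a)\equiv_\psi a$ for all $a$. On generators this holds because $p\equiv_\psi 1$, so $s(p,x_i)\equiv_\psi s(1,x_i)=x_i$. It propagates to all terms because $\tau$ and the identity are homomorphisms that agree modulo $\psi$ on generators. Second, $\ker\tau=\psi$. The inclusion $\ker\tau\subseteq\psi$ follows from the first property. For $\psi\subseteq\ker\tau$, by $1$-regularity it suffices to show $\tau(t)=1$ for every $t\in 1/\psi$.

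We check this last claim in every subdirectly irreducible quotient $\FV(n)/\mu$, $\mu\in\cm{\FV(n)}$. There are two cases.
\begin{enumerate}
\item If $p/\mu=1$, then $\tau(x_i)/\mu=x_i/\mu$, so $\tau(t)/\mu=t/\mu=1$, because $t\in 1/\psi=1/\theta(1,p)\subseteq 1/\mu$.
\item If $p/\mu\neq1$, we want $\tau(x_i)/\mu=1$ for all $i$; then $\tau(t)/\mu=t(\overline{1})/\mu=1$ by hypothesis. This is where we need the operation to satisfy $s(a,b)=1$ whenever $a\neq 1$ in an s.i.\ algebra, i.e.\ implication-like behaviour. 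That fails for a general subtractive term. Hence this is the main obstacle.
\end{enumerate}

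The plan for case~(2) is an induction along the layers of $\Cm{\FV(n)}$, using the assumption that $A\smallsetminus\{\op\}$ is a subuniverse. If $p/\mu=\op$, then in $\FV(n)/\mu$ with $p\mapsto\op$ we consider the quotient by the monolith, where $p$ becomes $1$. The subuniverse condition guarantees that values of terms avoid $\op$ unless forced, so $s(\op,b)$ can be controlled. If that does not suffice, we will replace $\tau$ by an iterated substitution $\tau^{N}$, with $N$ at least the height of $\Cm{\FV(n)}$, bounded by Lemma~\ref{lem:layers-gen}. Each iteration pushes the kernel one layer closer to $\psi$, since $\ker\tau^{k}$ increases and stays below $\psi$. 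By finiteness the chain stabilises at some $\tau^N$ with $\tau^N(a)\equiv_\psi a$. The remaining task is to show that the stable kernel equals $\psi$. For this we argue at a maximal $\mu\supseteq\ker\tau^N$ not containing $\psi$, and use the subuniverse condition to derive a contradiction with $t(\overline1)=1$.
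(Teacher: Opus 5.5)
Your framework is right: you verify Lemma~\ref{lem:Prop2.6}(2), $\ker\tau\subseteq\psi$ is automatic, and $\psi\subseteq\ker\tau$ reduces to $\tau(t)=1$ for $t\in 1/\psi$. But the proposal has two genuine gaps.

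\textbf{First gap: $\psi$ need not be principal.} Your reduction to $\psi=\theta(1,p)$ is false. Hilbert algebras satisfy every hypothesis of the lemma, yet in $\alg{F}_{\var{Hi}}(2)$ the congruence $\theta(1,x)\vee\theta(1,y)$ is not principal. Every element $t\neq 1$ has the form $t_1\ra(\cdots\ra(t_m\ra z))$ with $z\in\{x,y\}$, so $t\geq x$ or $t\geq y$ by (H5). A least element $p$ of the filter generated by $x,y$ would then satisfy $p\leq x$, $p\leq y$, and $p\geq x$ or $p\geq y$, forcing $x$ and $y$ to be comparable. Only with $\wedge$ in the signature do you get $\theta(1,a)\vee\theta(1,b)=\theta(1,a\wedge b)$.

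The paper handles this by induction on $k$ in $\psi=\bigvee_{i=1}^k\theta(1,t_i)$. These finitely many generators exist by local finiteness (Theorem~4.1 of \cite{Slo12}), which you also need before asserting that $\FV(n)$ is finite. Take $\tau$ for $\varphi=\bigvee_{i<k}\theta(1,t_i)$. Then $\tau(t_k)\in 1/\psi$, so $\tau(t_k)(\overline{1})=1$, and the principal case yields $\gamma$ with $\gamma(\tau(t_k))=1$ and $\gamma(a)\equiv_{\theta(1,\tau(t_k))}a$. The composite $\gamma\circ\tau$ moves elements only within $\psi$-classes and sends every $t_i$ to $1$, so $\ker(\gamma\circ\tau)=\psi$. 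This composition step is absent from your plan.

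\textbf{Second gap: the principal case is not proved.} In case~(2) you aim for $\tau(x_i)/\mu=1$ whenever $p/\mu\neq 1$. That is not how implication behaves. Already in $\var{Hi}$ with $s={\ra}$, if $p/\mu=\op_\mu$ then $\tau(x_i)/\mu=\op_\mu\ra x_i/\mu=x_i/\mu$, which is typically not $1$.

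For $\{\ra,1\}$-terms, Prucnal's substitution works for a different reason. By (H3), $a\mapsto p\ra a$ is an endomorphism, so $\tau(t)=p\ra t$ for every $t$. Hence $\tau(t)=1$ for $t\geq p$, with no case analysis. Once $0$ or other operations are present this identity breaks (already $\tau(0)=0\neq p\ra 0$). The hypothesis $t(\overline{1})=1$ must then be used in an essential way. That is exactly the content of Lemma~5.1 of \cite{Slo12}, which the paper invokes for $k=1$.

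Your fallback does not replace it. Iterating $\tau$ gives an increasing chain of kernels inside $\psi$, but nothing in the sketch forces the chain to reach $\psi$. The announced contradiction at a maximal $\mu$ is precisely the missing argument.

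So you need to cite or prove the principal case, and then add the inductive composition step.
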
  

\begin{proof}
By Theorem~4.1 of~\cite{Slo12} we know that $\mathcal{V}$ is locally finite. 
So, there exist terms $t_1,\dots, t_k\in 1/\psi$ such that
$\psi = \bigvee_{i=1}^k\theta(1,t_i)$. We proceed by induction on $k$.
For $k=1$ the claim is precisely Lemma~5.1 of~\cite{Slo12}. For $k\geq 2$,
putting $\varphi = \bigvee_{i=1}^{k-1}\theta(1,t_i)$ we get by the inductive
hypothesis that $\FV(n)/\varphi$ is projective.
Then, by Lemma~\ref{lem:Prop2.6}, there exists an endomorphism $\tau$ of
$\FV(n)$ such that for every $i\in\{1,\dots,n\}$ 
we have $\tau(x_i)\equiv_{\varphi} x_i$ and $\ker(\tau) = \varphi$. 
Hence, $\tau(t_k)\equiv_{\varphi} t_k$, and since $\varphi\subseteq\psi$ we get
$t_k\equiv_\psi 1$, and therefore $\tau(t_k)\in 1/\psi$.
But $\tau(t_k)(\overline{1}) = 1$ by our assumptions on $t_k$, so
applying Lemma~5.1 of~\cite{Slo12} we conclude
$\FV(n)/\theta\bigl(\tau(t_k),1\bigr)$ is projective.
Hence, there exists
an endomorphism $\gamma$ of $\FV(n)$ such that 
for every $i\in\{1,\dots,n\}$
we have $\gamma(x_i)\equiv_{\theta\bigl(\tau(t_k),1\bigr)} x_i$ and
$\gamma(\tau(t_k)) = 1$.
Now, observe that
$\gamma(\tau(x_i))\equiv_{\theta(\tau(t_k),1)}\tau(x_i)
\equiv_{\psi} x_i$ holds for every $x_i$ and therefore, since
$\theta(\tau(t_k),1)\subseteq\psi$, we have that 
$(\gamma\circ\tau)(x_i)\equiv_\psi x_i$ for every $x_i$.  
As moreover $(\gamma\circ\tau)(t_i) = 1$ for every $i\in\{1,\dots,k\}$, we get that
$\psi = \ker(\gamma\circ\tau)$. Hence, $\FV(n)/\psi$ is projective, as claimed.
\end{proof}  

The results from~\cite{Pru72, MW88} follow as corollaries. 

\begin{cor}\label{cor:SC}
The varieties $\var{Hi}$, $\var{Br}$ and $\var{Bo}$
are hereditarily structurally complete.
\end{cor}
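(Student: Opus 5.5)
The plan is to derive hereditary structural completeness from projectivity of finitely presented algebras. I would show that for each variety $\mathcal{V}$ in question (an arbitrary subvariety of $\var{Hi}$, $\var{Br}$ or $\var{Bo}$) every finitely generated algebra of the form $\FV(n)/\psi$ is projective. By Ghilardi's observation recalled above, projectivity of all finitely presented algebras in every subvariety gives hereditary structural completeness. Concretely: if a quasi-equation $\bigwedge_j s_j=1\Rightarrow r=1$ holds in $\FV(\omega)$, take $\psi=\bigvee_j\theta(1,s_j)$ in $\FV(n)$. Projectivity of $\FV(n)/\psi$ yields, by Lemma~\ref{lem:Prop2.6}, an endomorphism $\tau$ with $\ker\tau=\psi$ and $\tau(a)\equiv_\psi a$. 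Then $\tau(s_j)=1$, so by the quasi-equation in the free algebra $\tau(r)=1$, hence $r\in 1/\psi$. By local finiteness, this proves the quasi-equation throughout $\mathcal{V}$.

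So the real work is to verify the hypotheses of Lemma~\ref{lem:criteria} for every subvariety. First, $\var{Hi}$, $\var{Br}$ and $\var{Bo}$ are Fregean and subtractive via $x\ra y$, by Theorem~\ref{thm:1-orderable-subtr} and the remarks in Section~\ref{sec:Hi}; this passes to subvarieties. Their signatures are finite. The subuniverse condition on subdirectly irreducibles is the `furthermore' part of Theorem~\ref{thm:Hi-si}. For $\var{Bo}$ one checks it separately: with $|A|>2$ we have $0\neq\op$, so $0\in A\smallsetminus\{\op\}$, and closure under $\wedge$ and $\ra$ is as before.

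The remaining hypothesis is that every $t\in 1/\psi$ satisfies $t(\overline 1)=1$. For $\var{Hi}$ and $\var{Br}$ this is automatic: every term in $\{\ra,\wedge,1\}$ evaluates to $1$ at $\overline 1$, by an easy induction. Hence every finitely generated quotient of a free algebra is projective there.

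The main obstacle is $\var{Bo}$, where terms such as $0$ or $\neg x$ fail $t(\overline 1)=1$. My plan is to reduce to the zero-free case using Theorem~\ref{thm:quotient}: $\FVo(n)\cong\FV(n+1)/\vartheta$ with $\vartheta=\bigvee_i\theta(x_{n+1}\ra x_i,1)$, for $\mathcal V=\widehat{\mathcal V}$ in $\var{Br}$. A quotient $\FVo(n)/\psi$ is then a quotient $\FV(n+1)/\psi'$ of a $\var{Br}$-free algebra, hence projective in the zero-free variety by the previous paragraph. I must then transfer projectivity back to $\mathcal{V}_0$. The retraction $\tau$ on $\FV(n+1)$ satisfies $\tau(x_{n+1})\equiv_{\psi'}x_{n+1}$ with $\psi'\supseteq\vartheta$, and I would need it to induce an endomorphism of $\FVo(n)$ fixing $0$. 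If that fails, I would try instead to argue directly via Lemma~5.1 of~\cite{Slo12}, using $\wedge$ to replace a failing $t$ by an equivalent term satisfying the constant condition. I expect this transfer to be where the care is needed, and it is precisely where $\var{Ho}$ must break, in view of Wro\'nski's counterexample.
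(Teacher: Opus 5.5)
Your treatment of $\var{Hi}$ and $\var{Br}$ is exactly the paper's. Lemma~\ref{lem:criteria} applies because every $\{\ra,\wedge,1\}$-term takes the value $1$ at $\overline{1}$. Your derivation of the quasi-equation from the retraction $\tau$ of Lemma~\ref{lem:Prop2.6} is also fine. The gap is $\var{Bo}$: you leave it as a plan with two contingencies, and neither works as described.

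\textbf{The transfer through Theorem~\ref{thm:quotient}.} Nothing forces the zero-free retraction $\tau$ to satisfy $\tau(x_{n+1})\equiv_\vartheta x_{n+1}$, and the natural one does not. Take $n=1$ and $\psi=\theta(1,\neg x_1)$. Then $\psi'=\theta(1,u)$ with $u=(x_1\ra x_2)\wedge(x_2\ra x_1)$, and the Prucnal retraction is $\tau(x_i)=u\ra x_i$. Since $u/\vartheta=\neg x_1$, you get $\tau(x_2)/\vartheta=\neg\neg x_1\neq 0$. If you force $\sigma(0)=0$ while keeping $\sigma(x_1)=\neg x_1\ra x_1$, then $\sigma(\neg x_1)=\neg(\neg x_1\ra x_1)=\neg x_1\neq 1$, so $\sigma$ is not a unifier.

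There is also a structural reason no such transfer can succeed on its own. Theorem~\ref{thm:quotient} holds equally for $\mathcal V\subseteq\var{Hi}$, where all quotients of free algebras are projective. So any argument using only projectivity of $\FV(n+1)/\psi'$ plus that isomorphism would apply verbatim to $\var{Ho}$ and make it structurally complete, contradicting Wro\'nski.

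\textbf{Your fallback via Lemma~5.1 of~\cite{Slo12}.} The hypothesis of Lemma~\ref{lem:criteria} concerns every element of $1/\psi$, not a chosen generator. For $\psi=\theta(1,\neg x_1)$, the element $\neg x_1\in 1/\psi$ evaluates to $0$ at $\overline{1}$, so no choice of generator helps. A genuinely different unifier is needed; here $x_1\mapsto x_1\wedge\neg x_1$ works, and it really uses $\wedge$. The paper does not construct such unifiers either. It imports the needed fact from Theorem~4.7 and Proposition~4.11 of~\cite{Slo12}: for every $\mathcal V\subseteq\var{Bo}$, $\FV(n)/\psi$ is projective whenever $0\notin 1/\psi$. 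Without that input, or an explicit $\wedge$-based construction replacing it, your $\var{Bo}$ case is unproved.

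\textbf{A smaller point.} Your opening claim that every finitely presented algebra is projective is false in $\var{Bo}$ for the trivial quotient ($0\in 1/\psi$). This is harmless for structural completeness, since then $r\in 1/\psi$ trivially, but that case has to be split off.
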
  

\begin{proof}
First we note that all quotients of free algebras in
$\var{Hi}$, $\var{Br}$ are projective. This
follows immediately from 
Lemma~\ref{lem:criteria}, since all terms $t$ satisfy
$t(\overline{1}) = 1$.
For $\var{Bo}$, by Theorem~4.7 and Proposition~4.11 of~\cite{Slo12}
for any variety $\mathcal{V}\subseteq \var{Bo}$ we have that
$\FV(n)/\psi$ is projective for any congruence $\psi$ such that
$0\notin 1/\psi$. Therefore all non-trivial quotients of
$\FV(n)$ are projective. Hereditary structural completeness   
follows.
\end{proof}

The next result gives a sufficient condition for a quasi-equation to be equivalent
to an equation. 

\begin{theorem}\label{thm:translation}
Let $\mathcal{V}\subseteq \var{Ho}$ and let
$t_1,\dots,t_k,r\in \mathrm{Term}_{\{\ra,0,1\}}(n)$.   
If for every $i\in\{1,\dots,k\}$ we have $t_i(\overline{1}) = 1$, then
$$
\FV(n)\models \bigdoublewedge_{i=1}^k t_i = 1 \implies r = 1
\quad\text{if and only if}\quad
\FV(n)\models \{t_1,\dots, t_k\}\ra r = 1.
$$
\end{theorem}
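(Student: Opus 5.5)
The plan is to prove the two implications separately. Throughout, ``$\FV(n)\models$'' is read in the usual sense: the quasi-equation holds for every assignment of elements of $\FV(n)$, equivalently for every substitution of terms in $n$ variables.

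The right-to-left direction is routine. Suppose $\{t_1,\dots,t_k\}\ra r = 1$ holds identically in $\FV(n)$, and take an assignment under which every $t_i$ evaluates to $1$. Applying (H2) $k$ times, $1\ra(1\ra\dots(1\ra r))$ collapses to $r$, so $r=1$ under that assignment. This direction uses neither the hypothesis $t_i(\overline{1})=1$ nor the presence of $0$.

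The left-to-right direction is where projectivity enters. Let $\psi=\bigvee_{i=1}^k\theta(1,t_i)\in\Con{\FV(n)}$, with $t_i$ evaluated at the free generators. The target identity says that $\{t_1,\dots,t_k\}\ra r$ equals $1$ in $\FV(n)$, i.e.\ as a term in the generators. By EDPC for Hilbert algebras (iterated deduction theorem), this is equivalent to $(1,r)\in\psi$. So it suffices to show $r\in 1/\psi$.

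To get this, I would use that $\FV(n)/\psi$ is projective, via Lemma~\ref{lem:criteria}. The variety $\mathcal{V}\subseteq\var{Ho}$ is Fregean and subtractive with finite signature, and for subdirectly irreducibles $A\setminus\{\op\}$ is a subuniverse (Theorem~\ref{thm:Hi-si}; the constant $0$ stays inside when $|A|>2$). The lemma needs $s(\overline{1})=1$ for every $s\in 1/\psi$, but we are given this only for the generators $t_i$. I would close the gap by observing that the set of terms $s$ with $s(\overline{1})=1$ is a filter of $\FV(n)$: it is the $1$-block of the kernel of the homomorphism $\FV(n)\to\alg{2}$ sending every $x_i\mapsto 1$, which exists by Lemma~\ref{lem:P1-with-0} with $J=X$. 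That filter contains all $t_i$, hence contains the filter $1/\psi$ they generate. Lemma~\ref{lem:Prop2.6} then gives an endomorphism $\tau$ of $\FV(n)$ with $\ker\tau=\psi$ and $\tau(x_j)\equiv_\psi x_j$ for all $j$.

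Now apply the quasi-equation to the substitution $x_j\mapsto\tau(x_j)$. Then $t_i(\tau(\overline{x}))=\tau(t_i)=1$, since $t_i\in 1/\psi=1/\ker\tau$, so the antecedent holds and therefore $\tau(r)=1$. Hence $(r,1)\in\ker\tau=\psi$, which is exactly what we needed.

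The main obstacle is checking carefully that Lemma~\ref{lem:criteria} genuinely applies to $\mathcal{V}\subseteq\var{Ho}$. In particular I must confirm the subuniverse condition in the presence of $0$ for all subdirectly irreducibles, including the two-element one, and the reduction from ``every $s\in1/\psi$'' to the generators $t_i$. If the subuniverse condition caused trouble, my fallback would be to construct $\tau$ directly, in the style of Prucnal's trick: iterate $x_j\mapsto t_i\ra x_j$ over $i$, and verify $\ker\tau=\psi$ using the representation $S$.
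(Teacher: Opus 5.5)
Your proof is correct and follows essentially the same route as the paper. You form $\psi=\bigvee_i\theta(1,t_i)$, check the hypothesis of Lemma~\ref{lem:criteria} to get projectivity of $\FV(n)/\psi$, take $\tau$ from Lemma~\ref{lem:Prop2.6}, apply the quasi-equation to the substitution $\tau$, and finish with the deduction theorem. The only cosmetic differences are that you verify $s(\overline{1})=1$ on $1/\psi$ via the kernel of the evaluation homomorphism onto $\alg{2}$ (the paper evaluates $\{t_1,\dots,t_k\}\ra s=1$ at $\overline{1}$), and you read off $(r,1)\in\psi$ from $\ker\tau=\psi$ rather than from $\tau(r)\equiv_\psi r$.
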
  

\begin{proof}
Only the forward direction is not obvious, so suppose
$\FV(n)\models \bigdoublewedge_{i=1}^k t_i = 1 \implies r = 1$. 
Let $\varphi = \bigvee_{i=1}^k\theta(t_i,1)$, and let
$s\in 1/\varphi$. Then $\{t_1,\dots,t_k\}\ra s = 1$, and hence
$\bigl(t_1(\overline{1})\cdots t_k(\overline{1})\bigr)\ra s(\overline{1}) = 1$. But by
assumption $t_i(\overline{1}) = 1$ for each $i$, so we get
$1 = \{t_1(\overline{1}),\dots, t_k(\overline{1})\}\ra s(\overline{1}) =
1\ra s(\overline{1}) = s(\overline{1})$. This shows that $\varphi$ satisfies the 
conditions of Lemma~\ref{lem:criteria}, so
$\FV(n)/\varphi$ is projective. Now by Lemma~\ref{lem:Prop2.6}  
there exists an endomorphism $\tau\colon \FV(n)\to \FV(n)$ such that
$\tau(x_i)\equiv_\varphi x_i$ for each $i\in\{1,\dots,n\}$ and
$\ker(\tau) = \varphi$. Therefore $\tau(t_i) = 1$ for each
$i\in\{1,\dots,k\}$, and hence $1 = \tau(r) =
r(\tau(x_1),\dots,\tau(x_n))\equiv_\varphi r(x_1,\dots,x_n) = r$. 
It follows that $r\in 1/\varphi$ and this in turn means, by definition of
$\varphi$, that $r$ belongs to the filter generated by
$\{t_1,\dots,t_k\}$ and therefore
$\{t_1,\dots,t_k\}\ra r = 1$ as required.
\end{proof}  

Now we characterise projective homomorphic images of the free
algebras in $\mathsf{Ho}$. 

\begin{lemma}\label{lem:unique-term}
Let $\mathcal{V}\subseteq \var{Ho}$. Then, there exists precisely one term
$\ell\in\Irr{\FV(n)}$ such that $\ell(\overline{1})=0$. Namely,
$$
\ell(x_1,\dots,x_n) = \neg\{x_1,\dots, x_n\}.
$$
Therefore, for any
$\psi\in\Con{\FV(n)}$ such that $\ell\notin 1/\psi$, the algebra
$\FV(n)/\psi$ is projective.
\end{lemma}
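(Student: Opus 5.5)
The plan is to work entirely inside the representation of Theorem~\ref{thm:free-Hi0-alg}, where elements of $\FV(n)$ are identified with antichains $S(t)\subseteq\cm{\FV(n)}$. The key object is $\mu_X\in P_1(n)$, the kernel of the homomorphism $\ov{f}_X\colon\FV(n)\to\alg{2}$ sending every generator to $1$; by Lemma~\ref{lem:P1-with-0} it lies in $P_1(n)$. Evaluation at $\overline{1}$ is exactly $\ov{f}_X$. Since $\mu_X^+$ is the total congruence, $\mu_X\in S(t)$ iff $(1,t)\notin\mu_X$ iff $t(\overline{1})=0$. So the condition $t(\overline{1})=0$ is the same as $\mu_X\in S(t)$.

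\emph{Existence and uniqueness of $\ell$.} By Theorem~\ref{thm:repres}\eqref{a}, $t\in\irr{\FV(n)}$ iff $S(t)$ is a singleton. Hence an irreducible $t$ with $t(\overline{1})=0$ must satisfy $S(t)=\{\mu_X\}$, and injectivity of $S$ (Theorem~\ref{thm:mono}) gives uniqueness. For existence, compute as in the proof of Lemma~\ref{lem:1-holds-on-free}:
\[
S(\neg\{x_1,\dots,x_n\})=P_1(n)\smallsetminus\bigcup_{i=1}^n S(x_i).
\]
It then suffices to check that $P_1(n)\cap\bigcup_i S(x_i)=P_1(n)\smallsetminus\{\mu_X\}$. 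If $J\subsetneq X$, pick $x\notin J$. Then $x/\mu_J=0$, which is the coatom of $\alg{2}$, so $\mu_J\in S(x)$. On the other hand, each $x_i\in 1/\mu_X$, so $\mu_X\notin S(x_i)$. Hence $S(\ell)=\{\mu_X\}$, and $\ell$ is irreducible with $\ell(\overline{1})=0$.

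\emph{Projectivity.} I would apply Lemma~\ref{lem:criteria} to $\psi$. Its hypotheses on $\mathcal{V}$ hold for the following reasons. $\var{Ho}$ has finite signature. It is Fregean and subtractive via $\ra$. For subdirectly irreducible $\alg{A}$ with $|A|>2$, the set $A\smallsetminus\{\op\}$ is a subuniverse by the `furthermore' part of Theorem~\ref{thm:Hi-si}; it contains $0$ because $0\neq\op$ when $|A|>2$.

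It remains to show that every $t\in 1/\psi$ has $t(\overline{1})=1$. Suppose instead that $t\in1/\psi$ and $t(\overline{1})=0$; then $\mu_X\in S(t)$. In $\alg{Ac}$ we have $S(t)\preccurlyeq S(\ell)$ iff $S(\ell)\subseteq\dw{S(t)}$, and this holds since $S(\ell)=\{\mu_X\}$. Because $S$ is a Hilbert-algebra embedding, it reflects the order $x\le y\iff x\ra y=1$. Therefore $t\le\ell$, and since $1/\psi$ is a filter, hence an upset, we get $\ell\in1/\psi$, a contradiction. Lemma~\ref{lem:criteria} then yields that $\FV(n)/\psi$ is projective.

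The only delicate point I expect is keeping the orientation of $\preccurlyeq$ versus $\le$ straight when converting $\mu_X\in S(t)$ into $t\le\ell$. If that becomes awkward, a direct alternative is to compute $S(t\ra\ell)=S(t)\ra\{\mu_X\}=\emptyset$, using that $\up{\mu_X}\cap S(t)\ni\mu_X$.
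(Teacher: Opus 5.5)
Your proof is correct. Its first half is the paper's argument with the details filled in. The paper takes $\ell$ with $S(\ell)=\{\mu_X\}$ and says the form of $\ell$ ``easily follows''. You verify that $\mu_X\in S(t)$ iff $t(\overline{1})\neq 1$, because $\mu_X^+$ is the total congruence. You also compute $S(\neg\{x_1,\dots,x_n\})=P_1(n)\smallsetminus\bigcup_i S(x_i)=\{\mu_X\}$.

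For the projectivity clause your route differs. The paper writes $\psi=\bigvee_{i=1}^k\theta(1,u_i)$ with irreducible $u_i\neq\ell$. It uses the uniqueness clause to get $u_i(\overline{1})=1$. It then substitutes $\overline{1}$ into the identity $\{u_1,\dots,u_k\}\ra t=1$, which expresses that $t$ lies in the filter generated by the $u_i$. You argue inside the antichain representation instead: $t(\overline{1})\neq 1$ forces $\mu_X\in S(t)$, hence $S(t)\preccurlyeq S(\ell)$, hence $t\le\ell$. Since filters are upsets, $\ell\in 1/\psi$.

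Your version needs neither the uniqueness of $\ell$, nor the join-irreducible decomposition of $\psi$, nor the description of generated filters. It amounts to observing that the complement of the maximal filter $1/\mu_X$ is the principal downset of $\ell$. Consequently $\ell\notin 1/\psi$ is equivalent to $\psi\subseteq\mu_X$. The paper's version makes the ``Therefore'' of the statement literal, deriving projectivity from the uniqueness claim by generic filter reasoning.

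You also check the hypotheses of Lemma~\ref{lem:criteria}, notably that $0\neq\op$ when $|A|>2$. The paper leaves this implicit.
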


\begin{proof}
Recall the map $S$ defined in Theorem~\ref{thm:mono}. Restricting $S$ to
$\Irr{\FV(n)}$, as in Theorem~\ref{thm:repres}(1), we obtain a one-to-one
correspondence with $\{\{\mu\}: \mu\in\Cm{\FV(n)}\}$.
Take $\ell$ such that $S(\ell) = \{\mu_X\}$, where $X = \{x_1,\dots,x_n\}$,
cf. Lemma~\ref{lem:P1-with-0}. It easily follows from the definition
of $S$ that $\ell$ has the required form. 

Now let $\psi\in\Con{\FV(n)}$ be such that $\ell\notin 1/\psi$.
Then 
$\psi = \bigvee_{i=1}^{k}\theta(1,u_i)$, for some $u_i$ with
each $u_i\in\Irr{\FV(n)}\smallsetminus\{\ell\}$. 
By the first paragraph, $u_i(\overline{1}) = 1$ for each $i$. Take
$t\in 1/\psi$. Then, $\{u_1,\dots, u_k\}\ra t = 1$, 
and consequently, substituting $1$ for every $x_i$ we get
$\bigl\{u_1(\overline{1}),\dots, u_k(\overline{1})\bigr\}\ra t(\overline{1}) = 1$.
Therefore $t(\overline{1}) = 1$. By  Lemma~\ref{lem:criteria} then
$\FV(n)/\psi$ is projective as claimed. 
\end{proof}

Note that for any subquasivariety $\mathcal{Q}$ of a variety
$\mathcal{V}\subseteq\var{Ho}$, if all finite subdirectly irreducibles from
$\mathcal{V}$ belong to $\mathcal{Q}$, then $\mathcal{Q} = \mathcal{V}$.
Therefore, to test structural completeness it suffices to consider finite
subdirectly irreducibles.

\begin{theorem}\label{thm:generators}
Let $\mathcal{V}\subseteq\var{Ho}$. If every finite subdirectly irreducible
$\alg{A}\in\mathcal{V}$ has a set of generators $\{a_1,\dots, a_n\}$, where
$a_1 = \op$, such that
$\alg{A}\models \{a_2,\dots,a_n\}\ra 0 \neq 1$, then
$\mathcal{V}$ is hereditarily structurally complete.
\end{theorem}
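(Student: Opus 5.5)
Since every subvariety $\mathcal{W}\subseteq\mathcal{V}$ inherits the hypothesis (its finite subdirectly irreducibles are among those of $\mathcal{V}$), it suffices to prove that $\mathcal{V}$ itself is structurally complete. By the remark preceding the theorem, it is enough to show that every finite subdirectly irreducible $\alg{A}\in\mathcal{V}$ lies in the quasivariety generated by the free algebras $\FV(m)$. For this we want $\alg{A}$ to be a subalgebra, or at least a homomorphic image that is projective, of some $\FV(m)$: a projective quotient of a free algebra is a retract, hence a subalgebra of it. The natural move is to exhibit $\alg{A}$ as $\FV(n)/\psi$ with $\ell\notin 1/\psi$, where $\ell=\neg\{x_1,\dots,x_n\}$, and then invoke Lemma~\ref{lem:unique-term}.

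Concretely, take the generators $a_1=\op,a_2,\dots,a_n$ supplied by the hypothesis. Let $h\colon\FV(n)\onto\alg{A}$ be the surjective homomorphism with $x_i\mapsto a_i$, and set $\psi=\ker h$, so $\alg{A}\cong\FV(n)/\psi$. We must check $h(\ell)\neq1$, that is,
\[
\{\op,a_2,\dots,a_n\}\ra 0\neq 1 \quad\text{in } \alg{A}.
\]
By (H6) this element equals $\op\ra(\{a_2,\dots,a_n\}\ra 0)$. Put $b=\{a_2,\dots,a_n\}\ra 0$; by hypothesis $b\neq 1$. In a subdirectly irreducible Hilbert algebra every $b\neq1$ satisfies $b\leq\op$, so $\op\ra b$ is computed in $\alg{A}$, which by Theorem~\ref{thm:Hi-si} is $\alg{B}^\oplus$. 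By the masting definition, if $b<\op$ then $\op\ra b=\op\ra^{\alg{B}}b=1^{\alg{B}}\ra b=b$, renaming the old top as $\op$; if $b=\op$ then $\op\ra b=1$.

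This case split is the main obstacle: we need $b\neq\op$, which does not follow immediately from $b\neq1$. My first attempt would be to argue that $b=\op$ forces, after passing to $\alg{A}/\mu\cong\alg{B}$, the value $\{a_2,\dots,a_n\}\ra 0 = 1$ in $\alg{B}$. This would make $0$ lie in the filter generated by $a_2/\mu,\dots,a_n/\mu$. Since the zero of $\alg{B}$ is below $\op$, the hope is that this can be pushed back to contradict the hypothesis. If that fails, I would instead replace $\op$ by a different choice, or apply the hypothesis on generators directly in the form in which the authors presumably intend it, so that $\ell$ is not sent to $1$.

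Once $\ell\notin1/\psi$ is established, Lemma~\ref{lem:unique-term} gives that $\alg{A}\cong\FV(n)/\psi$ is projective, hence embeds in $\FV(n)$. Therefore $\alg{A}$ satisfies every quasi-equation valid in $\FV(\omega)$, and structural completeness follows. Applying the same argument to each subvariety yields hereditary structural completeness.
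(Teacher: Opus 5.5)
Your overall route is the paper's: send $x_i\mapsto a_i$, show $\ell=\neg\{x_1,\dots,x_n\}\notin 1/\ker h$, apply Lemma~\ref{lem:unique-term}, then use retract $\Rightarrow$ subalgebra of a free algebra $\Rightarrow$ structural completeness, with subvarieties inheriting the hypothesis. You also correctly isolated the one point that needs an argument: $h(\ell)=\op\ra b$ with $b=\{a_2,\dots,a_n\}\ra 0$, and $\op\ra b\neq 1$ requires $b\neq\op$, not just $b\neq 1$. But you leave this unresolved, and the attack you sketch cannot succeed. The monolith $\mu$ collapses precisely $\op$ and $1$, so ``$b/\mu=1/\mu$'' merely restates $b\in\{\op,1\}$ and gives nothing to contradict. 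Choosing a different $a_1$ is not an option either, since the hypothesis only supplies a generating set with $a_1=\op$. As written, the proof is incomplete at its only nontrivial step.

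The missing idea is how $\ra$ behaves at the coatom: in a subdirectly irreducible Hilbert algebra, $x\ra y=\op$ forces $x=1$ and $y=\op$. In $\alg{B}^\oplus$, if $x\neq 1$ and $x\not\leq y$, then $x,y\in B$, and $x\ra^{\alg{B}}y$ is the top $\op$ of $\alg{B}$ only if $x\leq y$; equivalently, this is Theorem~\ref{thm:oprema}(2) for $s=\ra$. Peel $b=a_2\ra(a_3\ra(\cdots(a_n\ra 0)\cdots))$ one arrow at a time: $b=\op$ would force $a_2=\dots=a_n=1$ and $0=\op$, i.e.\ $A=\{0,1\}$. So when $|A|>2$ you get $b\notin\{1,\op\}$, hence $b<\op$ and $h(\ell)=\op\ra b=b\neq 1$, and the rest of your argument goes through.

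The two-element algebra is a real exception. There $a_1=\op=0$, so $h(\ell)=1$ for every generating set, and Lemma~\ref{lem:unique-term} cannot be used. It is harmless, though: $\{0,1\}$ is a subuniverse of $\FV(n)$ fixed by every homomorphism $\FV(n)\to\alg{2}$, so $\alg{2}$ is a retract of $\FV(n)$ and hence projective. The paper's own proof passes over both points with ``by assumption $f(\ell)\neq 1$'', so your suspicion was well founded. Still, your write-up needs this argument to be a proof.
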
  

\begin{proof}
Let $f\colon\FV(n)\to \alg{A}$ be the homomorphism extending the natural map
given by $x_i\mapsto a_i$. By assumption
$f(\ell) \neq 1$ in $\alg{A}$, and thus taking
$\varphi = \ker{f}$ we get that $\FV(n)/\varphi \cong \alg{A}$
and $\ell\notin 1/\varphi$. By Lemma~\ref{lem:unique-term} we get that
$\alg{A}$ is projective. Since $\alg{A}$ was arbitrary,
all finitely generated free algebras in $\mathcal{V}$ are projective.
Therefore $\mathcal{V}$ is hereditarily structurally complete.
\end{proof}

\begin{example}
Let $\mathcal{V}\subseteq \var{Ho}$ be such that every finite subdirectly irreducible
algebra in $\mathcal{V}$ is an ordinal sum $\alg{B}\oplus\alg{A}$, where
$\alg{B}$ is a Boolean algebra, and $\alg{A}$ is any subdirectly irreducible
Hilbert algebra. Since $\alg{B}$ is a Boolean algebra, we can find a set
$\{b_1,\dots,b_k\}$ of generators of $\alg{B}$ such that
$\{b_1,\dots, b_k\}\ra 0 \neq 1$ (note that this means $b_1\wedge\dots\wedge
b_k\neq 0$ in the language of Boolean algebras). Now, let $a_1,\dots, a_s$ be
elements of $A$ such that $b_1,\dots,b_k,a_1,\dots,a_s$ generate  
$\alg{B}\oplus\alg{A}$. Since $a_i\ra 0 = 0$ for all $a_i$, we get that
$\{b_1,\dots,b_k,a_1,\dots,a_s\}\ra 0 \neq 1$. Hence, by
Theorem~\ref{thm:generators} the variety $\mathcal{V}$ is hereditarily
structurally complete. Such varieties include
Hilbert algebras with zero of height 2, that is,
subvarieties of $\var{Ho}$ satisfying the equation \textup($H_2$\textup);
linear Hilbert algebras, that is, $\var{Hb_1}$; and the variety of
$\{0,\ra\}$-subreducts of the algebras satisfying the weak excluded middle law
$\neg x\vee \neg\neg x = 1$. 
\end{example}

\section{Appendix: algorithms}\label{sec:append}
Algorithms~1, and~2 (see below) construct posets $\Cm{\alg{F}_{\var{Hi}}(n)}$,
and $\Cm{\alg{F}_{\var{Ho}}(n)}$. Since $\Cm{\alg{F}_{\var{Br}}(n)}$ and
$\Cm{\alg{F}_{\var{Bo}}(n)}$ are, respectively, isomorphic to those, they work
for them as well without any changes. Inspecting the algorithms for
$\Cm{\alg{F}_{\var{Hi}}(n+1)}$ and $\Cm{\alg{F}_{\var{Ho}}(n)}$ reveals that
$\Cm{\alg{F}_{\var{Ho}}(n)}$ can also be obtained from
$\Cm{\alg{F}_{\var{Hi}}(n+1)}$ by removing the downset $\dw\{(L,i): x_{n+1}\in L\}$. 

{\small
\begin{algorithm}[h]
\caption{Constructing $\Cm{\alg{F}_{\var{Hi}}(n)}$}
\begin{algorithmic}
\Require{$J\subsetneq X$, $1\leq i\leq n$}
\Comment{Points are pairs $(J,i)$}  
\State For a set $U$ of points,
$\textsc{Proj}(U) \deq \{J:  (J,i)\in U \text{ for some } i\}$
\State Let $P_1(n) = \{(J,1): J\subsetneq X\}$; $\leq\ \leftarrow\ =$ 
\Function{BuildNextLayer}{$\bigcup_{i=1}^{k-1}P_i(n)$} 
\State $P_k(n)\leftarrow \emptyset$
\For{$G\in \mathrm{Up}\left(\bigcup_{r=1}^{k-1}P_r(n)\right)$,
$G\cap P_{k-1}(n)\neq \emptyset$,
$L \subsetneq \bigcap \textsc{Proj}(G)$}
\State $P_k(n)\leftarrow P_k(n)\cup\{(L, k)\}$
\For{$(J,j)\in G$}
\State $\leq\ \leftarrow\ \leq\cup\ \langle(L,k),(J,j)\rangle$
\EndFor
\EndFor
\EndFunction
\For{$k\in\{2,\dots,n\}$}
\State $\textsc{BuildNextLayer}(\bigcup_{i=1}^{k-1}P_i(n))$
\EndFor
\end{algorithmic}\label{alg:build-P}
\end{algorithm}
}%
Now to apply Theorem~\ref{thm:free-Hilbert-alg} or
Theorem~\ref{thm:free-Hi0-alg} to build the desired
free algebra we only need to describe the
antichain selection function $S$. Let $\mathbb{P}(n)$ be the poset constructed
by either algorithm. By straightforward calculations we get
$$
S(x) = \{(L,i)\in\mathbb{P}(n):
x\notin L \text{ and } x\in L'\text{ for all } (L',j)>(L,i)\}.
$$

{\small
\begin{algorithm}[h]
\caption{Constructing $\Cm{\alg{F}_{\var{Ho}}(n)}$}
\begin{algorithmic}
\Require{$L\subseteq X$, $1\leq i\leq n+1$}
\Comment{Points are pairs $(L,i)$}  
\State For a set $U$ of points,
$\textsc{Proj}(U) = \{L:  (L,i)\in U \text{ for some } i\}$
\State Let $P_1(n) = \{(J,1): J\subseteq X\}$; $\leq\ \leftarrow\ =$ 

$\vdots$
\For{$k\in\{2,\dots,n+1\}$}
\State $\textsc{BuildNextLayer}(\bigcup_{i=1}^{k-1}P_i(n))$
\EndFor
\end{algorithmic}\label{alg:build-P0}
\end{algorithm}
}

\bibliographystyle{plain}
\bibliography{hilbert-algebras-final}

\end{document}